\numberwithin{equation}{section}
\def\mafo{\mathrm}
\renewcommand{\log}{\ln}
\def\div{\mathop{\mafo{div}}}
\def\bflog{\mathop{\text{\bfseries log}}}
\def\REVER{\mathop{\rightleftharpoons}\limits}
\newcommand{\BW}{^\text{bw}}
\newcommand{\FW}{^\text{fw}}
\newcommand{\LB}{\lambda_\rmB}
\newcommand{\Proj}{\mathsf P}
\def\tot#1#2{\frac{\d #1}{\d #2}} 
\def\laplace{\Delta}
\def\grad{\nabla}
\def\ie{e}
\def\Norm#1{\left\| #1 \right\|}
\def\({\begin{eqnarray}}
\def\){\end{eqnarray}}
\def\[{\begin{eqnarray*}}
\def\]{\end{eqnarray*}}
\begin{document}

\pagestyle{plain}

\title{Decay to equilibrium for energy-reaction-diffusion systems}

\author{Jan Haskovec\thanks{King Abdullah University of Science and Technology, Thuwal 23955-6900, KSA}, Sabine Hittmeir\thanks{Faculty of Mathematics, University of Vienna, Oskar-Morgenstern-Platz 1, A-1090 Vienna, Austria},\\Peter Markowich\thanks{King Abdullah University of Science and Technology, Thuwal 23955-6900, KSA}{\ } and Alexander Mielke\thanks{Weierstra\ss{}-Institut f\"ur Angewandte Analysis und  Stochastik, Mohrenstra\ss{}e 39, 10117 Berlin, Germany}}  


\maketitle

\begin{abstract} We derive thermodynamically consistent models
  of reaction-diffusion equations coupled to a heat equation. While
  the total energy is conserved, the total entropy serves as a driving
  functional such that the full coupled system is a gradient flow. 
The novelty of the approach is the Onsager structure, which is the
dual form of a gradient system, and the formulation in terms of the
densities and the internal energy.  In these variables it is possible
to assume that the entropy density is strictly concave such that there
is a unique maximizer (thermodynamical equilibrium) given linear constraints
on the total energy and suitable density constraints.

We consider two particular systems of this type, namely, a
diffusion-reaction bipolar energy transport system, and a
drift-diffusion-reaction energy transport system with confining
potential. We prove corresponding entropy-entropy production
inequalities with explicitely calculable constants and establish the
convergence to thermodynamical equilibrium, at first in entropy and
further in $L^1$ using Czisz\'ar-Kullback-Pinsker type inequalities.
\end{abstract}

MSC: 

Keywords: Gradient flows, Onsager system, thermodynamical
reaction-diffusion systems, maximum entropy principle. 


\section{Introduction}
\label{s:Intro}
The idea of this paper is to formulate the coupling between a
reaction-drift-diffusion system and a heat equation in terms of a
gradient flow system.  For reaction-diffusion systems a full gradient
structure was established only recently in \cite{Miel11GSRD},
including the non-isothermal case with a heat equation. The latter work
was largely inspired by the modeling paper \cite{AlGaHu02TDEM} and the
abstract theory on metric gradient flows, see e.g.\
\cite{Otto01GDEE,AmGiSa05GFMS,LieMie13GSGC}. We also refer to
\cite{Miel13TMER} for more details on Allen-Cahn or Cahn-Hilliard type
systems coupled to heat equations.  However, the coupling of different
gradient systems is non-trivial and the main observation of the latter
paper is that the coupling is largely simplified if we consider the
dual formulation, where the inverse $\bbK$ of the Riemannian metric
tensor $\bbG$ is used. We call the symmetric and positive
(semi)definite operator $\bbK$ an \emph{Onsager operator}, and the
triple $(\bfX,\Phi,\bbK)$ is called an \emph{Onsager system}, where
the state space $\bfX$ is a convex subset of a Banach space and
$\Phi:\bfX\to \R \cup\{\infty\}$ is the functional generating the evolutionary system
\[
\dot u = -\bbK(u) \rmD\Phi(u) &\quad \Longleftrightarrow \quad &
\bbG(u)\dot u= - \rmD \Phi(u).
\] 
We call the triple $(\bfX,\Phi,\bbG)$ the associated gradient
system. 

A major advantage of the Onsager form is its flexibility in modeling. 
This is due to the fact that the Onsager operator can be decomposed
into additive parts that account for different physical
phenomena - in our case, diffusion, heat transfer, and reaction.
In most applications the Onsager operator for non-isothermal systems 
has a special structure (cf.\ \cite{Edwa98ASDG,Otti05BET,Miel11FTDM})
with the free entropy being the correct driving potential
for the non-temperature part of the system, see
\eqref{eq:GO-y-r}.

We use the Onsager structure to prove convergence to equilibrium by an
application of the entropy method in two particular systems with
semiconductor-type reaction inspired by the Read-Shockley-Hall term,
see \cite{MarRinSchm90}.  First, we study a diffusion-reaction bipolar
energy transport system, and second, a drift-diffusion-reaction energy
transport system with confining potential.  In particular, we prove
entropy entropy-production inequalities with explicitly calculable
constants, using a generalization of the approach of
\cite{DiF-Fel-Mar,MiHaMa15UDER}, where the isothermal
reaction-diffusion system was 
considered.  This entails convergence to an entropy minimizing
equilibrium state, at first in entropy and further in $L^1$ norm using
Czisz\'ar-Kullback-Pinsker type inequalities, see e.g.\ \cite{UAMT00}.
The entropy approach is per se a nonlinear method avoiding any kind of
linearization and capable of providing explicitly computable
convergence rates. Moreover, being based on functional inequalities
rather than particular differential equations, it has the advantage of
being quite robust with respect to model variations.

To introduce the main ideas and notations we consider a simplified but still
nontrivial example, namely the reaction diffusion system
\begin{subequations}
  \label{eq:intro1}
\begin{align}
\dot u&= \delta \Delta u + \kappa \,\big(e^\alpha - u^\beta\big) ,\\
\dot e& = \delta \Delta e , 
\end{align}
\end{subequations}
where $u(t,x)>0$ is the density of a chemical species $X_u$ and $e(t,x)>0$
is the internal energy. The chemical species can be absorbed by or
generated from the background according to the mass action law $\beta
X_u \REVER \emptyset$, where the equilibrium state $w$ depends on the
internal energy, namely  $w(e)=e^{\alpha/\beta}$. This system can be
written as a gradient flow for the entropy functional $\calS$ and
the total conserved energy (assuming no-flux boundary conditions):
\begin{align*}
&\calS(u,e)=\int_\Omega S(u(x),e(x))\dd x \quad \text{ and } \quad 
\calE(u,e)=\int_\Omega e(x) \dd x, \text{ with }\\
&S(u,e)=s(e)-w(e)\LB\big( \frac{u}{w(e)} \big)= \wh s(e) - \LB(u) +
u\log w(e),
\end{align*}
where $\LB(\nu):= \nu \log \nu - \nu +1\geq 0$ with $\LB(1)=0$ and 
$\wh s(e)= s(e)- w(e)+1 $. Assuming that $\wh s$ is strictly
increasing and that $\wh s$ and $w$ are concave (i.e.\ $\alpha \leq
\beta$), implies that $S:{[0,\infty[}^2\to \R$ is concave, which is expected
from thermodynamical models. Moreover, the temperature $\theta
=1/\pl_e S(u,e)$ is positive. 

The gradient structure follows from the fact that the Onsager system
$(\bfX,\Phi,\bbK)$ generates \eqref{eq:intro1} if we choose 
\begin{align*}
&\bbK(u,e) \binom{\eta}{\eps} := - \div\Big(\bbM(u,e) \nabla \binom{\eta}{\eps} \Big) +
\bma{cc}\bbH(u,e)&0\\0 & 0\ema \binom{\eta}{\eps}, \\
&\text{where } \bbM(u,e)=  \delta
\big({-}\rmD^2S(u,e)\big)^{-1} \ \text{ and } \ \bbH(u,e)=\kappa \,
\frac{w(e)^\beta - u^\beta}{ \log w(e)- \log u}\:>0.
\end{align*}
With this choice we easily see that \eqref{eq:intro1} takes the form 
\[
\frac{\rmd}{\rmd t} \binom{u}{e} =  \bbK(u,e)\rmD \calS(u,e) \qquad
\text{and that } \bbK(u,e)\rmD\calE(u,e)=0,
\]
where the last relation leads to energy conservation, i.e. every
solution $t\mapsto (u(t),e(t))$ satisfies
$\calE(u(t),e(t))=\calE(u(0),e(0))=:E_0$.

Because of the strict concavity of $\calS$ , we expect that the
solutions $(u(t),e(t))$ converge to the unique maximizer of $\calS$
under the constraint $\calE(u,e)=E_0$, namely $(u_*,e_*)$
where $e_*=E_0/\text{vol}(\Omega)$ and $u_*=w(e_*)$. Thus, we can
define the non-negative and convex relative entropy 
\[
\calH(u,e)=\calS(u_*,e_*) + \rmD_e\calS(u_*,e_*)[e{-}e_*] -\calS(u,e),
\]
which satisfies $\calH(u,e)\geq \calH(u_*,e_*)=0$. 

The convergence to
equilibrium is now controlled by the entropy production $\calP$
defined as follows, where we reduce to the case $\alpha=1<\beta$ and $\wh
s(e)=\sigma e^{1/\beta}$ with $\sigma>0$ for notational simplicity:
\begin{align*}
&{-}\frac{\rmd}{\rmd t} \calH(u(t),e(t)) =  \calP(u(t),e(t))=
 \delta \calP_\text{diff}(u(t),e(t))+ \kappa
 \calP_\text{react}(u(t),e(t)), \quad\\ 
&\text{where } \calP_\text{diff}(u,e)= \int_\Omega \Big\{
\frac{\beta{-}1}\beta \,\frac{|\nabla u|^2}{u} + \frac{4\, e}\beta\,
 \Big| \nabla \sqrt{\tfrac{\ds u}{\ds e}} \Big|^2 + \sigma
\frac{\beta{-}1}{\beta^2} \,\frac{|\nabla e|^2}{e^{2-1/\beta}} \Big\}
\dd x \\
&\text{and } \calP_\text{react}(u,e)= \int_\Omega \bbH(u,e)\big(
\log(u/w(e))\big)^2 \dd x = \int_\Omega \frac1\beta\big(
u^\beta-e\big)\big( \log u^\beta - \log e\big) \dd x .
\end{align*}
After this modeling steps are done, the main task is to derive an entropy
entropy-production estimate in the form 
\begin{equation}
  \label{eq:I-EEPE}
\calP(u,e) \geq K(e_*)\calH(u,e) \text{ for all }(u,e)\text{ with }
\calE(u,e)=e_*\,\text{vol}(\Omega),
\end{equation}
which then leads, via $\frac{\rmd}{\rmd t}\calH(u,e)\leq -
K(e_*)\calH(u,e)$, to the exponential decay estimate 
\[
\calH(u(t),e(t)) \leq \exp\big({-}K(e_*)t\big)\, \calH(u(0),e(0))
\text{ for } t>0, 
\]
where $e_*=\calE(u(0),e(0))/\text{vol}(\Omega)$. 

In fact, we are not able to prove \eqref{eq:I-EEPE} in the general
form given here, but refer to \cite{MiHaMa15UDER} for such results in
the isothermal case. As in this work the general strategy is (i) to exploit
$\calP_\text{diff} $ to estimate the distance between $(u,e)$ and its
averages $(\ol u,e_*)$ and (ii) to exploit $\calP_\text{react} $ to
estimate $\ol u  - u_* $. For (i), we see that the first
and third term in $\calP_\text{diff}$ allow a first estimate, but
because of the special structure of $S$ which couples $u$ and $e$
non-trivially, we also need the second term, which gives a log-Sobolev
estimate for $u/e$ with respect to the measure $\frac{\ds e}{\ds e_*} \dd x$
such that we have to impose bounds of the form $0< \underline e \leq e(t,x) \leq
\ol e <\infty$ for showing \eqref{eq:I-EEPE}, see Propositions
\ref{prop:EEP} and \ref{prop:EEP2}.

The structure of the paper is as follows: In Section
\ref{s:GradientOnsager} we provide a general review on gradient and
Onsager systems and introduce non-isothermal systems.  In
Section \ref{s:RDS} we present the Onsager structure for a wide class
of isothermal reaction-diffusion systems based on the assumption of
detailed balance for the reaction system, cf.\ \cite{Miel11GSRD}. 
Reaction and diffusion can be discussed as separate dissipative
processes giving
$\bbK=\bbK_\text{diff}+\bbK_\text{react}$.  
In Section \ref{s:RDSTemp} we follow \cite{Miel13TMER} for the
modeling of non-isothermal reaction-diffusion systems and provide the
corresponding Onsager operator. As in example \eqref{eq:intro1}
we will see that it is advantageous to use the internal energy $e$ as
variable instead of the more common temperature $\theta=1/\pl_e
S(\bfu,e)$. This is even more evident in the analysis in Sections
\ref{s:Special} and \ref{s:Convergence}. 
In Section \ref{ss:CompSCBoltz} we also compare the general form of
these systems with the energy-transport models derived in
\cite{Juengel09,Juengel10} from a diffusive
scaling of the semiconductor Boltzmann equation.  In Section \ref{s:Special} we
derive special instances of the general system, in particular, a
diffusion-reaction bipolar energy-transport system, and a
drift-diffusion-reaction energy transport system with confining
potential $V(x)$.  In Section \ref{s:Convergence}, we apply the entropy
entropy-production method to study the convergence to equilibrium
for the two systems, where the focus is to derive the estimate 
\eqref{eq:I-EEPE}. The first model is posed on a $d$-dimensional
torus of homogeneous material, i.e.\ the constitutive functions
$w$, $s$, and hence $S$ are independent of $x$. The 
second model is considered in the full space setting, where
$w_i(x,e)=C_i \sqrt e\,\exp(-V(x))$ and $\wh s(x,e)=c \sqrt e
\,\exp(-V(x))$. Now the entropy entropy-production estimate
\eqref{eq:I-EEPE} can be derived in suitably weighted spaces, see
Proposition \ref{prop:EEP2}.  Finally, in
Appendix \ref{s:App} we provide an overview of auxiliary
results that we use throughout the paper.

\section{Gradient systems including heat equations} 
\label{s:GradientOnsager}

In this section we discuss some general background about
gradient systems and address the general question how the
temperature or other thermodynamic variables such as the internal
energy $e$ or the entropy $s$ can be included. We follow the ideas
developed in \cite{Miel11FTDM,Miel13TMER}. Instead of the metric
tensor $\bbG$ which is in the origin of the name gradient system, we
will use its inverse $\bbK=\bbG^{-1}$ which we call Onsager operator,
as it was Onsager's fundamental contribution in \cite{Onsa31RRIP} to
show that the matrix or operator $\bbK$ that maps thermodynamic
driving forces into rates should be symmetric (called reciprocal
relation at that time) and positive semidefinite.  Throughout our
arguments are formal and assume sufficient smoothness of the
potentials as well as the solutions (which is the common approach in
thermomechanics).

\subsection{General modeling with gradient systems}
\label{ss:Gradient}
A gradient system is a triple $(\bfX,\Phi,\bbG)$ where $\bfX$ is the
state space containing the states $U\in \bfX$. For simplicity we
assume that $\bfX$ is a reflexive Banach space with dual $\bfX^*$. 
The driving functional $\Phi:\bfX\to \R\cup\{\infty\}$ is
assumed to be differentiable (in a suitable way) such that the
potential restoring force is given by $-\rmD \Phi(U)\in \bfX^*$.
The third ingredient is a metric tensor $\bbG$, i.e.\ $\bbG(U):\bfX \to
\bfX^*$ is linear, symmetric and positive (semi-)definite. Indeed,
in a proper manifold setting, $\bbG$ maps the tangent space
$\rmT_U\bfX$ into the cotangent space
$\rmT_U^*\bfX = (\rmT_U\bfX)^*$.   
The gradient flow associated with $(\bfX,\Phi,\bbG)$ is the
(abstract) force balance
\begin{equation}
  \label{eq:GO1}
  \bbG(U)\dot U = - \rmD \Phi(U) \qquad \Longleftrightarrow \qquad 
\dot U= - \nabla_{\!\bbG} \Phi(U) =: - \bbK(U)\rmD \Phi(U), 
\end{equation}
where we recall that the ``gradient'' $\nabla_{\!\bbG} \Phi$ of the
functional $\Phi$ is an element of $\bfX$ (in contrast to the
differential $\rmD \Phi(U)\in \bfX^*$) and is calculated via 
$\bbK(U)\rmD\Phi(U)$ with $\bbK(U):=\bbG(U)^{-1}$. The left equation
in \eqref{eq:GO1} is an abstract force balance, since
$\bbG(U)\dot U \in \bfX^*$ can be seen as a viscous force arising from
the motion 
of $U$. The equation on the right-hand side is a rate equation,
where the equality is formulated in the tangent space $\bfX$. 

The  
symmetries of $\bbG$ and $\bbK$ allow us to define the
associated primal and dual \emph{dissipation potentials} 
$\Psi:\bfX\ti \bfX\to [0,\infty]$ and $\Psi^*:\bfX\ti \bfX^*\to
[0,\infty]$, respectively, via
\[  
\Psi(U,V)= \frac12\langle \bbG(U)V, V\rangle \quad \text{and}
\quad \Psi^*(U,\Xi)= \frac12\langle \Xi,\bbK(U)\Xi \rangle,
\]
where $\Psi^*(U,\cdot)$ is the Fenchel--Legendre transform of
$\Psi(U,\cdot)$. 
If $\Phi$ is the negative total entropy, then $\Psi$ is called the
\emph{entropy production potential}. 

Hence, using $\rmD_{V} \Psi(U,V)=\bbG(U)V$ and $\rmD_\Xi
\Psi^*(U,\Xi)=\bbK(U)\Xi$ the equations in \eqref{eq:GO1} can be
written as  
\[
0= \rmD_V\Psi(U,\dot U) + \rmD \Phi(U) \qquad \Longleftrightarrow \qquad 
\dot U = \rmD_{\Xi} \Psi^*(U,{-}\rmD \Phi(U)),
\]
which are also the correct forms for so-called generalized gradient systems, 
where $\Psi(U,\cdot)$ and
$\Psi^*(U,\cdot)$ are not quadratic, see
\cite{Miel14?EGCG,LMPR15?MOGG}. 

The importance of gradient systems is clearly motivated in the theory
of thermodynamics, namely by the \emph{Onsager symmetry principle},
see \cite{Onsa31RRIP,DegMaz84NET}.  Strictly speaking, this principle
is only derived for systems close to thermodynamic equilibrium, see 
\cite{Otti05BET} for physical justifications to use these principles
in a wider range. The symmetry principle has
two forms, both of which are important for reaction-diffusion
systems. (I) In the first case one considers a spatially homogeneous
system described by a state vector $z$, which is a small perturbation
of the equilibrium. Then, its macroscopic rate $\dot z$ is given in
the form $- \bbH\zeta$, where $\zeta= -\rmD S(z)$ is the
thermodynamically conjugate driving force and $S$ is the entropy.  The
symmetry relation states that the matrix $\bbH$ has to be symmetric,
while the entropy production principle $\frac{\rmd}{\rmd t} S(z(t))=
\rmD S(z)\cdot \bbH \rmD S(z)\geq 0$ implies that $\bbH$ has to be positive
semidefinite. (II) 
In the second case one considers a spatially extended system with
densities $u_i>0$ defining a vector
$\bfu=(u_i)_{i=1,...,I}:\Omega \to {]0,\infty[}^I$ and a total entropy
$\calS(\bfu)=\int_\Omega S(x,\bfu(x))\dd x$. If each total mass $m_i :=\int_\Omega
u_i(x)\dd x $ is conserved, then the densities 
satisfy a balance equation of the form 
\[
\dot\bfu + \div \bfj_\bfu=0 \quad \text{with} \quad  \bfj_\bfu = \bbM
\nabla \bfmu, 
\]
where the vector $\bfmu$ of the chemical potentials is given by 
$\bfmu = \rmD \calS(\bfu)$, i.e.\ $\mu_i(x)=\pl_{u_i} S(x,u(x))$. 
Again, the symmetry and entropy principle imply that $\bbM$
is a symmetric and positive semidefinite tensor (of fourth order), see
\cite{Onsa31RRIP}. 

Note that in this work we will call $\bfmu=\rmD \calS$ the
thermodynamic driving force (rather than a potential).
In our approach the driving force is lying in the dual space of the variable
(here $\bfu$), while $\nabla \mu_j$ relates to gradient in the
physical domain $\Omega$. However, more importantly, we will couple
the equation $\dot\bfu+\div \big(\bbM \nabla \bfmu\big) =0$ with
$\bfmu=\rmD S$ in the form $\dot \bfu= \bbK(\bfu)\rmD S$, where
$\bbK(u)=- \div\big( \bbM\nabla \Box\big)$ is a symmetric operator.

As was observed in \cite{Miel11GSRD,GliMie13GSSC,Miel13TMER},
it is advantageous to use the Onsager operator $\bbK$ for modeling
purposes rather than the metric operator $\bbG=\bbK^{-1}$, so we will
also write $(\bfX,\Phi,\bbK)$  for the gradient system and call it
Onsager system then. 
The flexibility in modeling arises from the fact that 
 evolutionary systems are often  written in rate form where the 
vector field is additively decomposed into different physical
phenomena.  This additive split can be also used for the Onsager
operator, as long as all the different effects are driven by the same
functional $\Phi$. Below we will see that $\bbK$ takes the additive form 
\[
\bbK = \bbK_\text{diff} + \bbK_\text{react}
+\bbK_\text{heat},
\]
such that the evolution equation reads 
\[
\dot U=-\Big(  \bbK_\text{diff}\rmD\Phi +
\bbK_\text{react} \rmD\Phi + \bbK_\text{heat}  \rmD\Phi \Big) = 
-\bbK \rmD\Phi.
\]
A similar additive split is not possible for the metric $\bbG$, as
the inverse operator to a sum of operators is difficult to express.

\subsection{Non-isothermal Onsager systems}
\label{ss:IsoNoniso}

In the isothermal case the functional $\Phi$ is typically the
free energy, and the state $U$ consists of positive densities $u_i$ of
phase indicators $\varphi_j$. In the non-isothermal case the
functional $\Phi$ is the total entropy $\calS$ and an additional
scalar thermal variable $r$ is needed, which can be the
absolute temperature $\theta>0$, the internal energy density $e$,
the entropy density $s$, or some variable derived from those. As in
\cite[Sect.\,2.3]{Miel11FTDM} we will
keep $r$ unspecified at this stage, because this elucidates the
general structure. Hence the states take the form 
$U=(\bfy,r)$, and we consider the functionals
\begin{equation}
\label{eq:2.calEcalS}
\calE(\bfy,r)=\int_\Omega E(x,\bfy(x),r(x))\dd x \quad
\text{and} \quad 
\calS(\bfy,r)=\int_\Omega S(x,\bfy(x),r(x))\dd x,
\end{equation}
where the constitutive functions $E$ and $S$ are related by  Gibbs formula
defining the temperature
\[
\theta= \Theta(x,\bfy,r):= \frac{\pl_r E(x,\bfy,r)} {\pl_r S(x,\bfy,r)}.
\]
 Without loss of generality, we subsequently assume that $\pl_r E$ and $\pl_r S$ are positive. 

We also argue that physically relevant driving forces should not
depend on the choice of $r\in \{\theta,e,s\}$. Thus, introducing the
Helmholtz free energy $\psi=e-\theta s$ and the Helmholtz free entropy
$\eta = -\psi/\theta= s -e/\theta$ (also called Massieu potential),
we have the formulas
\begin{align*}
&\psi=F(x,\bfy,r):=E(x,\bfy,r){-}\Theta(x,\bfy,r)S(x,\bfy,r)  
\text{ and } 
\\
&\eta=H(x,\bfy,r):=S(x,\bfy,r){-}\frac{E(x,\bfy,r)}{\Theta(x,\bfy,r)}.
\end{align*}
The point here is that the driving forces $\pl_\bfy F$ and
$\pl_\bfy S$ are independent of the choice of $r$ when the arguments
are transformed correspondingly. 

In the non-isothermal case the total entropy $\calS$ (with the
physically correct sign) is increasing, so $\Phi=-\calS$ is the
driving potential for the gradient flow. However, we will not make
this distinction in the text; instead, we will always use the
corresponding correct signs in the formulas.  Our Onsager system
$(\bfX,\calS,\bbK)$ hence gives rise to the equation $\dot U = \bbK(U)
\rmD \calS(U)$.  In order to have energy conservation, we need
\[
0 = \frac\rmd{\rmd t} \calE(U)=
\langle \rmD \calE(U),\dot U\rangle = \langle
\rmD\calE(U),\bbK(U) \rmD \calS(U)\rangle =
\langle \bbK(U)\rmD \calE(U), \rmD \calS(U)\rangle,
\]
where we used $\bbK=\bbK^*$.
Hence, it is sufficient (but not necessary) to impose the condition
\[
\bbK(U)\rmD \calE(U)= 0 \quad \text{for all } U \in \bfX. 
\]
 
In many applications the Onsager operator for non-isothermal systems 
has a special structure
(cf.\ \cite{Edwa98ASDG,Otti05BET,Miel11FTDM}), namely 
\begin{subequations}
\label{eq:2.SpecialK}
\begin{align}
&\bbK(\bfy,r) = \calM^*_\calE 
\bma{cc}\bbK^\text{simple}_\bfy &0\\0& \bbK^\text{simple}_\text{heat} \ema
\calM_\calE \quad \text{ with }\\
&\bbK^\text{simple}_\text{heat}\rho=-\div \big(k_\text{heat}(\bfy,r) 
\nabla \rho \big) \quad \text{and} \quad
\calM_\calE=\bma{cc}
I&-\frac1{\pl_r\calE}\rmD_\bfy \calE\\
0&\frac1{\pl_r\calE} \ema .
\end{align}
\end{subequations}
The definition of $\calM_\calE$ implies that  
\[
\calM_\calE\rmD \calE=\binom{0}1\quad \text{and} \quad 
\calM_\calE\rmD\calS=\binom{\rmD_\bfy\calS-\frac1\Theta
    \rmD_\bfy\calE}{1/\Theta} = 
\binom{\rmD_\bfy \mathfrak H(\bfy,r)}{1/\Theta},
\]
where $\mathfrak H(\bfy,r)=\int_\Omega H(x,\bfy(x),r(x)) \dd x$ is the total free
entropy. 
Since $\bbK^\text{simple}_\text{heat} 1\equiv 0$, we have the desired relation
$\bbK\rmD\calE\equiv 0$ for  energy conservation. 
Moreover, the coupled system $(\dot\bfy,\dot
r)=\bbK(\bfy,r)\rmD\calS(\bfy,r)$ can be rewritten in the form 
\begin{align}
  \label{eq:GO-y-r}
&\dot\bfy = \bbK^\text{simple}_\bfy(\bfy,r) \rmD_\bfy \mathfrak H(\bfy,r), \quad
\dot r = \frac1{\pl_r E} \Big( \pl_\bfy E \cdot \dot \bfy  + 
\bbK^\text{simple}_\text{heat}(\bfy,r)\big(1/\Theta(\bfy,r)\big) \Big) 
\end{align} 
We conclude that in the non-isothermal case with conserved
energy $\calE$ the correct driving potential for the non-temperature
part $\bfy$ of the system is the free entropy
$\mathfrak H(\bfy,\theta)$, see also \cite{Miel15?FEFE}. Nevertheless,
$\calS$ is the functional for the Onsager system, the correction
$-\frac1{\pl_r E}\pl_\bfy\calE$ arises because of energy
conservation, which is encoded in the way $\bbK$ is constructed from
$\bbK^\text{simple}_\bfy$ and $\bbK^\text{simple}_\text{heat}$. 

We end this section by observing that the case $r=e$, namely
$\wh E(x,\bfy,e):= e $, leads to an especially simple case, because
$\pl_e\wh E \equiv 1$ and $\pl_\bfy \wh E(x,\bfy,e)\equiv 0$. Indeed,
\eqref{eq:GO-y-r} turns into
\begin{align} 
  \label{eq:GO-y-e}
& \dot\bfy = \bbK^\text{simple}_\bfy(\bfy,e) \rmD_\bfy \calS(\bfy,e), \quad
\dot e = \bbK^\text{simple}_\text{heat}(\bfy,e)\Big(1/\wh\Theta(\bfy,e) \Big). 
\end{align}

\section{Isothermal reaction-diffusion systems}
\label{s:RDS}

While the gradient structure for scalar diffusion equations (e.g.\
porous medium equation or the Fokker-Planck equation) is well known
(cf.\ \cite{JoKiOt98VFFP,Otto01GDEE}), the Onsager structure for a
wider class of reaction-diffusion systems is less known. It was
established in a few particular cases (see \cite{Yong08ICPD,
  GrmOtt97DTCF12}), but only highlighted in its own right in
\cite{Miel11GSRD,GliMie13GSSC}. The central point is that in the
Onsager form we have an additive splitting of the Onsager operator
into a diffusive part and a reaction part, namely $\dot \bfu=
-\big(\bbK_\text{diff}(\bfu)+ \bbK_{\text{react}}(\bfu)\big)
\calF_\text{chem}(\bfu)$, where $\bfu:\Omega \to {]0,\infty[}^I $ is
the vector of densities of the species $X_1,...,X_I$.  The free-energy
functional $\calF_\text{chem}$, which is also called the relative
entropy with respect to the reference density $\bfu^*$, takes the form
\begin{equation}
  \label{eq:2FChem}
\calF_\text{chem}(\bfu)=\int_\Omega \sum_{i=1}^I
u_i^* \LB (u_i(x)/u_i^*) \dd x  \quad \text{ where} \quad 
\LB(\nu):=\nu \log \nu-\nu +1. 
\end{equation}
We will now discuss the diffusive and reactive parts separately.

\subsection{Diffusion systems} 
\label{sss:DiffSystems}

For the gradient structure of diffusion systems $\dot \bfu =
\div\big(\bbM(\bfu)\nabla \bfu\big)$ one might be tempted to use a
functional involving the gradient $\nabla u$, however we have to use
the relative entropy as a driving functional, because we have to use
the same functional for modeling the reactions. Hence, we use the
Wasserstein approach to diffusion introduced by Otto in
\cite{JoKiOt98VFFP,Otto01GDEE}.

The diffusion system will take the form 
$\dot \bfu= - \bbK_\text{diff}(\bfu) \rmD \calF_\text{chem}(u)$ with the
Onsager operator $\bbK_\text{diff}$ given via 
\[
\bbK_\text{diff}(\bfu)\bfmu= - \div \big( \wt \bbM(\bfu)\nabla
\bfmu\big),
\]
where $\wt \bbM(\bfu):\R^{m\ti d} \to\R^{m\ti d}$ is a symmetric and
positive semi-definite tensor of order 4. The Onsager operator can
also be implicitly defined via the dual dissipation potential, which
will be useful later:
\[
\Psi^*_\text{Wass}(\bfu,\bfmu) = \frac12 \int_\Omega
\nabla\bfmu{:}\wt\bbM(\bfu)\nabla \bfmu \dd x,
\]
where $\bfmu=(\mu_i)_{i=1,..,I} $ is the vector of chemical
potentials, which occurs as the driving force 
\[
\bfmu=\rmD_\bfu \calF_\text{chem}(\bfu)=
\bflog\bfu- \bflog\bfu^*.
\]
Hence, if the reference densities $\bfu^*$ are spatially constant
(which is, however, usually not true in heterostructures like
semiconductors), the Onsager system leads to the diffusion system
\[
\dot\bfu= \div\big(\wt\bbM(\bfu)
\nabla(\bflog\bfu{-}\bflog \bfu^*)\big) =
\div\big( \bbM(\bfu) \nabla \bfu\big),
\ \text{ where } \wt\bbM(\bfu)= \bbM(\bfu)\mafo{diag}(\bfu).
\]

\subsection{Chemical reaction kinetics} 
\label{sss:ChemReact}
Chemical reaction systems are ODE systems of the type $\dot\bfu=\bfR(\bfu)$,
where often the right-hand side is written in terms of polynomials
associated to the reaction kinetics. It was observed in
\cite{Miel11GSRD} that under the assumption of detailed balance (also
called reversibility) such system have a gradient structure with the
relative entropy as the driving functional. 
We assume that there are $R$ reactions of mass-action type (cf.\ e.g.\ 
\cite{DegMaz84NET,GliMie13GSSC}) between the species $X_1,...,X_I$ in
the form
\[
\alpha^r_1 X_1+\cdots +\alpha^r_I X_I \ \REVER^{k_r\FW}_{k_r\BW} \  
\beta^r_1 X_1+\cdots +\beta^r_I X_I, \qquad r=1,\dots,R,
\]
where $k_r\BW$ and $k_r\FW$ are the backward and forward reaction
rates, and the vectors $\bfalpha^r,\;
\bfbeta^r\in \N_0^I$ contain the stoichiometric coefficients.
For instance, for the chemical reaction 2\,CO + 1\,O$_2 \ \REVER \ $2\,CO$_2$ we
have $\bfalpha=(2,1,0)^\top$ and $\bfbeta=(0,0,2)^\top$. 

The associated reaction system for the densities (in a spatially
homogeneous system, where diffusion can be neglected) reads 
\begin{equation}
  \label{eq:ReactSyst}
   \dot\bfu =\bfR(\bfu):=-\sum_{r=1}^R \big(k_r\FW\bfu^{\bfalpha^r}{-}
k_r\BW\bfu^{\bfbeta^r}\big)\Big( \bfalpha^r- \bfbeta^r\Big),
\end{equation}
where we use the monomial notation $\bfu^\bfalpha=
u_1^{\alpha_1}\cdots u_I^{\alpha_I}$.  The main assumption to obtain a
gradient structure is that of \emph{detailed balance}, which means
that there exists a reference density vector $\bfu^*$ such that all
$R$ reactions are balanced individually, namely
\begin{equation}
  \label{eq:DetBalance}
\exists\,\bfu_*  \in \left]0,\infty\right[^I\ \forall\,
r=1,...,R\ \forall \, \bfu\in \left]0,\infty\right[^I : \ k_r\FW(\bfu)
\bfu_*^{\bfalpha^r} = k_r\BW(\bfu)  \bfu_*^{\bfbeta^r} =:k^*_r(\bfu). 
\end{equation}
Here we used the freedom to let reaction coefficients
depend on the densities (and later also on other material
properties like temperature). 

We now define the Onsager matrix 
\begin{equation}
  \label{eq:2OnsagerMatrix}
  \bbH(\bfu)=\sum_{r=1}^R k^*_r (\bfu)
  \bfLambda\big(\tfrac{\bfu^{\bfalpha^r}}{\bfu_*^{\bfalpha^r}}  ,
       \tfrac{\bfu^{\bfbeta^r}}{\bfu_*^{\bfbeta^r}} \big) 
  \big(\bfalpha^r{-}\bfbeta^r\big) 
  {\otimes} \big(\bfalpha^r{-}\bfbeta^r\big) \text{ with }
\bfLambda(a,b)=\frac{a-b}{\log a - \log b} 
\end{equation}
and find, following  \cite{Miel11GSRD}, that the reaction system
\eqref{eq:ReactSyst} takes the form  
\[
\dot \bfu= \bfR(\bfu)= - \bbH(\bfu) \rmD\calF_\text{chem}(\bfu)
\]
with $\calF_\text{chem}$ given by \eqref{eq:2FChem}.
This follows easily by using the definition of $\bfLambda$ and the
rules for logarithms, namely
$\big(\bfalpha^r{-}\bfbeta^r\big)\cdot \big(\bfmu{-}\bfmu^*)= 
\log\!\big({\bfu^{\bfalpha^r}}/{\bfu_*^{\bfalpha^r}}\big)- 
\log\!\big({\bfu^{\bfbeta^r}}/{\bfu_*^{\bfbeta^r}}\big) $. 
The quotient $\Lambda(a,b)=\frac{a-b}{\log a- \log b}$ (or variants of
it) have occurred occasionally in the modeling of reaction
kinetics, see \cite[Sect. V.B]{GrmOtt97DTCF12},
\cite{EdeGil08TCKM,Eder09TMMR}, and \cite[Sect.\,7]{Yong08ICPD}.

We refer to \cite{LMPR15?MOGG,MPPR15?LDPC} for a different
gradient structure, where $\calF_\text{chem}$ is the same, but the
quadratic dual dissipation potential $\Psi^*_\text{quad}(\bfu,\bfmu)=\frac12
\bfmu\cdot \bbH(\bfu)\bfmu$ is replaced by a non-quadratic one that is
derived from a large-deviation principle. 
  
\subsection{Coupling diffusion and reaction}
\label{sss:CouplDiffReact}

We summarize the previous two subsections by stating the following
result derived in \cite{Miel11GSRD}. 

\begin{theorem}
  \label{th:RDS} 
If the reaction-diffusion system 
\begin{equation}
  \label{eq:RDSthm}
\dot\bfu = \div\!\big( \bbM(\bfu) \nabla \bfu\big) + \bfR(\bfu)
\end{equation}
with $\bfR(\bfu)=-\sum_{r=1}^R \big(k_r\FW(\bfu)\bfu^{\bfalpha^r}{-}
k_r\BW(\bfu)\bfu^{\bfbeta^r}\big)\big( \bfalpha^r {-} \bfbeta^r\big)$ satisfies
the detailed balance condition \eqref{eq:DetBalance} and 
$\wt\bbM(\bfu)=\bbM(\bfu)\mafo{diag}(\bfu)$ is symmetric and positive
semidefinite, then it is an Onsager system $\dot \bfu = -
\bbK_{\mafo{RD}}(\bfu) \rmD\calF_\mafo{chem}(\bfu)$ with
\[
\calF_\mafo{chem}(\bfu)= \!\!\int_\Omega {\ts\sum\limits_{i=1}^I }u_i^* 
\LB (u_i(x) /u_i^*) \dd x  \text{ and } 
 \bfPsi_\mafo{RD}^*(\bfu,\bfmu)=\frac12\int_\Omega\!\!
\nabla\bfmu{:}\wt\bbM(\bfu){:}\nabla \bfmu {+} \bfmu {\cdot}
\bbH(\bfu) {\cdot} \bfmu \dd x .
\]
\end{theorem}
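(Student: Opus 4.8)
The plan is to verify the diffusive and reactive contributions separately and then exploit the additive structure $\bbK_\text{RD}(\bfu) = \bbK_\text{diff}(\bfu) + \bbK_\text{react}(\bfu)$, where $\bbK_\text{react}(\bfu)\bfmu := \bbH(\bfu)\bfmu$ is the pointwise multiplication operator by the matrix $\bbH(\bfu)$ from \eqref{eq:2OnsagerMatrix}. First I would compute the common driving force. Since $\LB'(\nu) = \log\nu$, differentiating $\calF_\text{chem}$ gives componentwise $\mu_i = \rmD_{u_i}\calF_\text{chem}(\bfu) = \log(u_i/u_i^*)$, that is $\bfmu = \rmD\calF_\text{chem}(\bfu) = \log\bfu - \log\bfu^*$, where the reference $\bfu^*$ is taken to be the detailed-balance equilibrium $\bfu_*$ of \eqref{eq:DetBalance}. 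The goal is then to check that $-\bbK_\text{RD}(\bfu)\bfmu$ reproduces the right-hand side of \eqref{eq:RDSthm}.

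For the diffusion part, with $\bfu^*$ spatially constant one has $\nabla\bfmu = \mafo{diag}(\bfu)^{-1}\nabla\bfu$, so that $\wt\bbM(\bfu)\nabla\bfmu = \bbM(\bfu)\,\mafo{diag}(\bfu)\,\mafo{diag}(\bfu)^{-1}\nabla\bfu = \bbM(\bfu)\nabla\bfu$ by the hypothesis $\wt\bbM = \bbM\,\mafo{diag}(\bfu)$. Hence $-\bbK_\text{diff}(\bfu)\bfmu = \div(\wt\bbM(\bfu)\nabla\bfmu) = \div(\bbM(\bfu)\nabla\bfu)$, recovering the diffusion term.

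The main step is the reaction part, where the construction of $\bbH$ through the logarithmic mean $\bfLambda(a,b) = (a-b)/(\log a - \log b)$ is exactly what linearizes the mass-action law in $\bfmu$. Writing $a_r := \bfu^{\bfalpha^r}/\bfu_*^{\bfalpha^r}$ and $b_r := \bfu^{\bfbeta^r}/\bfu_*^{\bfbeta^r}$, additivity of the logarithm gives $(\bfalpha^r - \bfbeta^r)\cdot\bfmu = \log a_r - \log b_r$, so the scalar factor in $\bbH(\bfu)\bfmu$ collapses via $\bfLambda(a_r,b_r)\big[(\bfalpha^r - \bfbeta^r)\cdot\bfmu\big] = a_r - b_r$. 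Summing over $r$ and inserting the detailed-balance identities $k_r^*(\bfu)\,a_r = k_r\FW(\bfu)\bfu^{\bfalpha^r}$ and $k_r^*(\bfu)\,b_r = k_r\BW(\bfu)\bfu^{\bfbeta^r}$ then yields $\bbH(\bfu)\bfmu = \sum_r (k_r\FW\bfu^{\bfalpha^r} - k_r\BW\bfu^{\bfbeta^r})(\bfalpha^r - \bfbeta^r) = -\bfR(\bfu)$. I expect the bookkeeping of these three cancellations — logarithm additivity, the defining property of $\bfLambda$, and the detailed-balance substitution reintroducing the rate coefficients — to be the only genuinely delicate point; everything else is mechanical.

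Combining the two parts gives $-\bbK_\text{RD}(\bfu)\bfmu = \div(\bbM(\bfu)\nabla\bfu) + \bfR(\bfu)$, which is \eqref{eq:RDSthm}. It then remains to confirm that $\bbK_\text{RD}$ is a genuine Onsager operator generated by $\bfPsi^*_\text{RD}$. A short integration by parts (using the no-flux boundary conditions) shows $\tfrac12\langle\bfmu, \bbK_\text{RD}(\bfu)\bfmu\rangle = \tfrac12\int_\Omega \nabla\bfmu{:}\wt\bbM(\bfu)\nabla\bfmu + \bfmu\cdot\bbH(\bfu)\bfmu\,\dd x = \bfPsi^*_\text{RD}(\bfu,\bfmu)$, whence $\bbK_\text{RD}(\bfu)\bfmu = \rmD_\bfmu\bfPsi^*_\text{RD}(\bfu,\bfmu)$ by the quadratic form. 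Symmetry and positive semidefiniteness follow termwise: the diffusion block inherits them from the assumed symmetry and positive semidefiniteness of $\wt\bbM$, while each reaction summand is a nonnegative multiple $k_r^*(\bfu)\bfLambda(a_r,b_r) \geq 0$ (the logarithmic mean is positive for $a_r, b_r > 0$) of the symmetric rank-one tensor $(\bfalpha^r - \bfbeta^r)\otimes(\bfalpha^r - \bfbeta^r)$.
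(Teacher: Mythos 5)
Your proposal is correct and follows essentially the same route as the paper, which only sketches the argument in Sections~\ref{sss:DiffSystems}--\ref{sss:ChemReact} (citing \cite{Miel11GSRD}): the driving force $\bfmu=\bflog\bfu-\bflog\bfu^*$, the identity $\wt\bbM(\bfu)\nabla\bfmu=\bbM(\bfu)\nabla\bfu$ for spatially constant $\bfu^*$, and the collapse of $\bbH(\bfu)\bfmu$ to $-\bfR(\bfu)$ via the logarithmic mean, logarithm additivity, and detailed balance are exactly the three ingredients the paper invokes. Your additional verification of symmetry and positive semidefiniteness of each reaction summand is consistent with the paper's construction of $\bbH$ in \eqref{eq:2OnsagerMatrix}.
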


We mention that many reaction-diffusion systems studied (including
semiconductor models involving an elliptic equation for the
electrostatic potential) have the gradient structure developed above,
see,
e.g.\ \cite{GliHun05GERP,DesFel06EDTE,DesFel07EMRD,Glit09EEER,BotPie11ILRD}.
So far, the gradient structure was not used explicitly, only the
Liapunov property of the free energy (relative entropy) was exploited.

As we assume throughout that our system \eqref{eq:RDSthm}
has no-flux boundary conditions, we may have additional
conservation laws, often called conservation of atomic mass. With
\[
\bbS:= \mathrm{span}\{\:\bfalpha^r{-}\bfbeta^r \:|\: r=1,\ldots,R\:\}
\subset \R^I  \ \text{ and } \ 
\bbS^\bot:= \set{ \bfp \in \R^I }{ \bfp\cdot \bfgamma = 0 \text{ for
    all } \bfgamma \in \bbS },
\]
we denote the stoichiometric subspace $\bbS$ associated to the reaction system
\eqref{eq:ReactSyst} and its orthogonal complement. From $\bbH(\bfu)\bfp
\equiv 0$ for all $\bfp\in \bbS^\bot$ we conclude that the 
functionals  
$\calC_\bfp(\bfu):= \int_\Omega \bfp\cdot \bfu(x) \dd x$
are conserved along solutions of the reaction-diffusion system. 
Defining by $\Proj :\R^I\to \R^I$ the orthogonal projection onto
$\bbS^\bot$, we obtain that 
\begin{equation}
  \label{eq:calP--bbS-bot}
\calC_{\bbS^\bot}(\bfu):= \int_\Omega \Proj  \bfu(x) \dd x \ \in
\bbS^\bot
\end{equation}
is conserved along solutions of \eqref{eq:RDSthm}.

\section{Non-isothermal reaction-diffusion systems}
\label{s:RDSTemp}

We now restrict ourselves to a system described by $(\bfu,r)$
with the total energy and entropy functionals 
\[
\calE(\bfu,r)=\int_\Omega E(x,\bfu(x),r(x)) \dd x \quad
\text{and} \quad 
\calS(\bfu,r)=\int_\Omega S(x,\bfu(x),r(x)) \dd x,
\]
where the integrands are strictly local, i.e.\ they do not depend on
$\nabla \bfu$ and $\nabla r$. Recall that $r$ is a scalar
thermodynamical variable such as $\theta$, $e$, or $s$. Throughout the
paper, the densities may explicitly depend on the material point, but
we will omit this dependence in the sequel. The energy density $E$
and the entropy density $S$ satisfy the Gibbs relation $\pl_r E(\bfu,
r)=\Theta(\bfu, r) \pl_r S(\bfu, r)$ and the positivity of the
specific heat $\pl_\theta E>0$.

The dual entropy production potential $\Psi^*$ will depend on the
state $(\bfu,r)$ and the thermodynamic conjugate variables
$(\bfmu,\rho)$. In principle, $\Psi^*$ will contain three
parts, namely a diffusion part, a reaction part, and a part for heat
conduction. However, the heat conduction and the diffusion can be
joined into one quadratic form on $(\nabla\bfmu,\nabla\rho)$, thus
allowing for ``cross-diffusion'' effects between chemical diffusion
and heat transfer, which is needed to model thermophilic or
thermophobic materials occurring for instance in polymers, see e.g.\
\cite{AndWei12TBB}.

To guarantee energy conservation, we follow
\cite[Sect.\,3.6]{Miel11GSRD} in using the special form
\eqref{eq:2.SpecialK} and consider 
\begin{subequations}
  \label{eq:2PsiNonisoRDS}
\begin{align}
&\Psi^*(\bfu,r;\bfmu,\rho)=\wt\Psi^*(\bfu,r;
\bfmu{-}\tfrac{\rho}{\pl_r E} \pl_\bfu E, \tfrac{\rho}
{\pl_r E})\\
&\text{with }\wt\Psi^*(\bfu,r;\wt\bfmu,\wt\rho)= 
 \frac12\int_\Omega (
\nabla\wt\bfmu,\nabla\wt\rho){\mdot} \wt\bbM(\bfu,r)
( \nabla\wt\bfmu,\nabla\wt\rho)+ \wt\bfmu\vdot \bbH(\bfu,r)\wt\bfmu\dd x,
\end{align}
\end{subequations}
where $\bbH$ is given as in \eqref{eq:2OnsagerMatrix}. The mobility
tensor $\bbM(\bfu,r):\R^{I\ti d}\ti \R^d\to \R^{I\ti d}\ti \R^d$
is symmetric and positive semidefinite and has the block structure
\[
\bbM(\bfu,r) = \bma{cc} \bbM_{\bfu\bfu}(\bfu,r)& 
\bbM_{\bfu r}(\bfu,r)\\ 
\bbM^*_{\bfu r}(\bfu,r) & \bbM_{r r}(\bfu,r)
\ema. 
\]
The associated Onsager operator $\bbK$ satisfies $\bbK \rmD
\calE\equiv 0$, and  we see that $\Psi^*$ only depends on 
\[
\rmD_\bfu \calS - \frac1\theta \rmD_\bfu \calE =
\pl_\bfu S -\frac1\Theta \pl_\bfu E = \pl_\bfu H \quad \text{and}
\quad \frac{\pl_\theta S}{ \pl_\theta E}= \frac1\Theta,
\]
where $H=-\psi/\theta = -F(\bfu,\Theta)/\Theta= S-E/\Theta$ is the
free entropy.

\subsection{Reaction-diffusion systems with temperature}
\label{ss:RDStemperature}

For completeness and for the comparison with our modeling based
on the internal energy, we also consider the choice 
$r=\theta$, which leads to the following system for  $(\bfu,\theta)$:
\begin{align*}
\dot \bfu& \ts = -\div \bfj_\bfu  + \bbH(\bfu,\theta)\big(\pl_\bfu
\ol S(\bfu,\theta) {-}\frac1\theta \pl_\bfu \ol E(\bfu,\theta)\big),\\
\dot \theta& \ts = -\frac1{\pl_\theta \ol E} \div \bfj_\theta +
\frac1{\pl_\theta \ol E}  \pl_\bfu \ol E {\vdot} 
\big( \div \bfj_\bfu - \bbH(\bfu,\theta)\big(\pl_\bfu
\ol S(\bfu,\theta) {-}\frac1\theta \pl_\bfu \ol E(\bfu,\theta) \big),
\end{align*}
with the fluxes
\begin{align*}
\bfj_\bfu& \ts = \bbM_{\bfu\bfu}(\bfu,\theta) \nabla\big( \pl_\bfu
\ol S(\bfu,\theta){-}\frac1\theta \pl_\bfu \ol E(\bfu,\theta) \big)
+ \bbM_{\bfu\theta}(\bfu,\theta)\nabla(1/\theta), \\
\bfj_\theta& \ts = \bbM^*_{\bfu\theta}(\bfu,\theta) \nabla\big( \pl_\bfu
\ol S(\bfu,\theta){-}\frac1\theta \pl_\bfu \ol E(\bfu,\theta) \big)+
\bbM_{\theta\theta}(\bfu,\theta) \nabla(1/\theta).
\end{align*}
In the examples below, we will see that the formulation in terms
of $(\bfu,e)$ gives a much simpler system. Moreover, in
general the function $s=\ol S(\bfu,\theta)$ does not enjoy any
concavity properties, in contrast to the function $s=\wh S(\bfu,e)$.

\subsection{Reaction-diffusion systems with internal energy}
\label{sss:RDSIntern}

According to \cite{AlGaHu02TDEM,Miel11GSRD}, it is more convenient to
study energy-reaction-diffusion systems with the choice $r=e$, for the
following two reasons. First, it is physically reasonable to assume
that 
\[
s= \wh S(x,\bfu,e)
\]
is a strictly concave function in the extensive variables $\bfu$ and
$e$. Second, 
\[
\wh\calE(\bfu,e)= \int_\Omega e(x) \dd x
\]
provides a linear functional to express energy conservation.
Thus, the energy and entropy functionals are  
\[
\wh\calE(\bfu,e) = \int_\Omega e(x) \dd x \quad \text{and} \quad
\wh\calS(\bfu,e)=\int_\Omega \wh S(x, \bfu(x),e(x)) \dd x.
\]   
Now the Gibbs relation leads to the definition of temperature as 
\[
\theta =\wh\Theta(x,\bfu,e):= 1/\pl_e \wh S(x,\bfu,e),
\]
where the relation $\pl_e \wh S(\bfu,e)>0$ is imposed. 

The driving force through the
free entropy is most simple, as $\pl_\bfu H = \pl_\bfu \wh S$, since
using $\wh E(\bfu,e):=e$ we have $\pl_\bfu \wh E\equiv 0$. 
Thus, the dual entropy-production potential  in terms of $(\bfu,e)$
takes the form
\begin{equation}\label{ent.prod}
\wh\Psi^*(\bfu,e; \bfmu,\eps) =\frac12 \int_\Omega 
(\nabla\bfmu,\nabla \eps) {\mdot} \wh \bbM(\bfu,e)
(\nabla\bfmu,\nabla \eps) + \bfmu{\vdot} \wh\bbH(\bfu,e) \bfmu\, \dd x,
\end{equation}
where $\wh\bbM$ and $\wh\bbH$ are positive semidefinite. As a
consequence of the simple form of $\wh\calE$, and hence of $\wh\Psi^*$,
the evolution equations for $(\bfu,e)$ take the simple form
\begin{equation}
  \label{eq:RDS-u-e}
\binom{\dot \bfu}{\dot e} =- \div\Big(
\wh\bbM(\bfu,e) \nabla\binom{\pl_\bfu \wh S(\bfu,e)}{\pl_e \wh S(\bfu,e) }\Big)
   +\binom{ \wh\bbH(\bfu,e)\pl_\bfu S(\bfu,e)}{0} .
\end{equation}

This form has the major advantage that we easily see the parabolic
nature. Moreover, there are no source terms in the energy equation. 

\subsection{Two examples}
\label{ss:Examples}

The major modeling issue in non-isothermal reaction-diffusion
system is the fact that the thermodynamical equilibrium satisfying
the detailed-balance condition \eqref{eq:DetBalance} should depend on
the temperature. In our modeling we will make it dependent on the
internal energy $e$ and write $\bfu=\bfw(x,e)=(w_1(x,e),...,w_I(x,e)$ for
the equilibrium. Obviously, this equilibrium is obtained by maximizing
$\wh S(x,\bfu,e)$ with respect to $\bfu$. Note that we always assume
that $\wh S(x,\cdot,\cdot)$ is strictly concave, so a maximizer is
unique. 

We give two examples of relevant entropy functions $\wh S$. The first one
depends on $e$ in the form $\log e$ like for gases, while in the
second one can choose $\wh S$ such that it stays finite at $e=0$ which
corresponds to $\theta=0$, which is suitable for modeling solids,
i.e.\ electrochemical species diffusing and reacting inside a solid. 
Both examples use an additive form of the entropy, which is a good
model only in the case of low densities; of course also entropies with
strong coupling between the components $u_1,...,u_I$ could be considered.

\textbf{Example 1} has the form 
\[
\wh S_1(x,\bfu,e)= \sum_{i=1}^I \Big( c_i u_i \log e - u_{*,i}
\LB\big(u_i/u_{*,i}\big)\Big),
\]
where $c_i\geq 0$ and $u_{*,i}>0$ may depend on $x\in \Omega$. 
Using $\LB'(\nu)=\log \nu$ we find that $\pl_\bfu \wh S_1(u,e)=0$ for 
\[
\bfu = \bfw(e)= \big( u_{*,i} e^{c_i} \big)_{i=1,...,I}.
\]
Gibbs relation gives the relation between internal energy
and temperature in the form
\[
\frac{1}{\theta}= \frac{1}{\wh\Theta(\bfu,e)} =\frac{ \bfc\cdot \bfu}{e} \quad \text{ or }
\quad e = \ol E(\bfu,\theta)=\theta\: 
\bfc\cdot \bfu, \ \text{ where }\bfc=(c_1,...,c_I).
\]
For the second derivative of $\wh S_1$ we have the relation 
\begin{align*}
-\tbinom{\bfmu}\eps \cdot \pl^2\wh S_1(\bfu,e)\tbinom{\bfmu}\eps 
&= \sum_{i=1}^I\frac{\mu_i^2}{u_i} - 
  2\frac{\bfc\cdot \bfmu}{e} \: \eps +\frac{\bfc\cdot\bfu}{e^2}\: \eps^2\\
&\overset{(1)}{\geq} \sum_{i=1}^I \frac{\mu_i^2}{u_i} - \frac{(\bfc \cdot
  \bfmu)^2}{\bfc \cdot \bfu} 
\ \overset{(2)}{\geq} \ \sum_{i=1}^I \frac{\mu_i^2}{u_i} - \sum_{i=1}^I \frac{c_i
  \mu_i^2}{u_i},
\end{align*}
where we minimized over $\eps$ to obtain $\overset{(1)}{\geq}$, while
$\overset{(2)}{\geq}$ follows using the Cauchy-Schwarz estimate. Thus, we
see that concavity of $\wh S_1$ holds if and only if all $c_i$ lie in
$[0,1]$. 

However, inserting $e=\ol E(\bfu,\theta)$ into $\wh S_1$, we see that 
$\ol S_1(\bfu,\theta)=\wh S_1(\bfu,\ol E(\bfu,\theta))$ is not concave in
general. Indeed, in the case $I=1$ with $u_*=1$ we have
\[
\ol S(u,\theta)= c u \log \theta -(1{-}c)u\log u + (1{+}c\log c)u -1
\]
which is concave if and only if $c\in [0,1/2]$.  

For \textbf{Example 2}, we allow for general functions $w_i(e)$ and assume that
$\wh S_2$ has the form
\begin{equation}
  \label{S}
  \wh S_2(\bfu,e)=  s(e) - \sum_{i=1}^I w_i(e) \LB\big( u_i/w_i(e)
\big) = \wh s(e) -\sum_{i=1}^I \Big(\LB(u_i)-u_i\log w_i(e)
\Big), 
\end{equation}
where $ \wh s(e)=s(e)+I-\sum_{i=1}^I w_i(e)$. Since
$\LB'(\nu)=\log \nu=0$ if and only if $\nu=1$, the reference densities
in the detailed balance condition \eqref{eq:DetBalance} are exactly
$\bfw(e)$. In addition to the dependence on the internal energy (i.e.\
on the temperature), they may vary with $x\in \Omega$. The
concavity can be checked by calculating
\[
- \binom\bfmu\eps \vdot \rmD^2 \wh S_2(\bfu,e) \binom\bfmu\eps = 
\sum_{i=1}^I u_i\Big(\frac{\mu_i}{u_i} - \eps
\frac{w'_i(e)}{w_i(e)}\Big)^2 + \eps^2 \, \Big( {-}\wh s''(e)- \sum_{i=1}^I
u_i\frac{w''_i(e)}{ w_i(e)}\Big). 
\]
Thus, we have strict concavity on the whole domain ${]0,\infty[}^I\ti
{]e_0,\infty[}$ if and only if $\wh s''(e)<0$ and $w''_i(e)\leq 0$ for all
$i$. 

Thus, the form \eqref{S} provides a quite general form to model
suitable temperature-dependent reaction-diffusion systems. A simple
choice  for $s(e)$ and  $\bfw(e)$ are therefore given by
\begin{equation}
  \label{s}
\begin{aligned}
& \wh s(e)= c\log e  \text{ for }c>0 \quad \textnormal{or} \quad \
\wh s(e)=c e^{\sigma} \text{ for }\sigma \in {]0,1[}, \\
&w_i(e)= u_{*,i} e^{b_i} \quad \text{for some } u_{*,i}>0 \text{ and }
b_i\in [0,1].
\end{aligned}
\end{equation}
In the case $s(\ie)= c\log e$ we find the simple relation 
$1/\theta =\pl_e \wh S(\bfu,e)= \big(c{+} \bfb {\vdot} \bfu)/e$,
where $\bfb=(b_i)_{i=1,...,I}$,
and we have the nice linear relation
\[
e = \ol E(\bfu,\theta)= \big(c{+} \bfb {\vdot} \bfu)\,\theta.
\]

\subsection{Comparison with energy transport models derived from the
  semiconductor Boltzmann equation} 
\label{ss:CompSCBoltz}

In the context of semiconductor modeling, energy transport equations
were derived in the diffusive scaling limit of the Boltzmann equation,
see, e.g.\ \cite{Juengel09, Juengel10}. The general (unipolar) form
obtained by this procedure is 
\begin{subequations}
\label{eq:n-e}
\begin{align}
\dot n &= \div \bfj_n, \label{SemET1}\\
\dot e &= \div \bfj_e - \bfj_n \cdot\grad_x V - W, \label{SemET2} 
\end{align}
with $n$ and $e$ the electron position and electron energy
densities, respectively.  The term $W$ describes averaged inelastic particle
scattering and $V$ is the electrostatic potential.  The particle and
energy current densities $\bfj_n$ and $\bfj_e$ are expressed as
functions of the chemical potential $\mu$ and electron temperature
$\theta$,
\begin{align}
    \bfj_n &= D_{00} \left( \grad_x \left(\frac{\mu}{\theta}\right) - \frac{\grad_x V}{\theta} \right) - D_{01} \grad_x \left(\frac{1}{\theta}\right),\\
    \bfj_e &= D_{10} \left( \grad_x \left(\frac{\mu}{\theta}\right) - \frac{\grad_x V}{\theta} \right) - D_{11} \grad_x \left(\frac{1}{\theta}\right),
\end{align}
\end{subequations}
where $D_{ij}=D_{ij}(\mu,\theta)$, $i$, $j\in \{0,1\}$, 
defines the symmetric and positive definite diffusion matrix.
For the sake of the comparison with our formulation \eqref{eq:RDS-u-e}
we set $W=0$.

Under reasonable simplifying assumptions
\cite{Juengel09, Juengel10} the extensive variables $(n,e)$ and the
intensive variables $(\mu, \theta)$ are related by the formulas
\begin{equation}
  \label{eq:Juengel}
    n = N_0 \theta^{3/2}\exp(\mu/\theta) \ \text{ and } \ e = \frac{3}{2} n\theta,
\end{equation}
where $N_0 \theta^{3/2} $ is the scaled density of states.
The diffusion matrix $(D_{ij})$ then becomes
\[
    (D_{ij}) = \mu_0 \Gamma(2{-}\beta) n\theta^{1/2-\beta}
      \begin{pmatrix}
       1 & (2{-} \beta)\theta \\ (2 {-} \beta)\theta & 
  (3{-}\beta)(2{-}\beta)\theta^2
      \end{pmatrix},
\]
where $\mu_0>0$ comes from the elastic scattering rate and $\Gamma$
denotes the Gamma function.  Typical choices for $\beta$ are
$\beta=1/2$ (nonpolar phonon scattering), $\beta=0$ (acoustic phonon
scattering) and $\beta=-1/2$ (diffusion approximation of the
hydrodynamic semiconductor model), see \cite{Juengel10}.

We now show that the above system \eqref{eq:n-e} is consistent
with our modeling scheme with
\[
\calS(n,e)=\int_\Omega \wh S(n(x),e(x))\dd s  \ \text{ and } \ 
\calE(n,e)= \int_\Omega e(x) - n(x) V(x) \dd x ,
\]
where using the relation $\mu=-\theta \pl_n\wh S(n,e)$ the choice 
\[
\wh S(n,e)=\frac32 n \log e - \frac52 \big(n\log n -n\big)+ n
\log\big(N_0 (2/3)^{3/2}\big) 
\]
is consistent with \eqref{eq:Juengel}.  Example 1 in
Section \ref{ss:Examples} shows that $\wh S$ is concave. We define the
dual entropy-production 
potential via 
\[
\Psi^* (n,e;\nu,\eps)= \frac12\int_\Omega \binom{\nabla \nu + \eps
  \nabla V}{\nabla \eps} \cdot \binom{D_{00} \quad D_{01}}{D_{10}\quad
  D_{11}} \binom{\nabla \nu + \eps
  \nabla V}{\nabla \eps} \dd x ,
\]
where we emphasize that the term $\eps\nabla V$ (in contrast to
$\nabla ( \eps V)$\,) is important to obtain the above model. Using
$\Psi^*(n,e;\rmD\calE(n,e))=0$ we conclude that the associated Onsager 
operator $\bbK(n,e)=\rmD^2\Psi(n,e)$ satisfies the energy conservation
$\bbK(n,e)\rmD\calE(n,e)=0$. Indeed, we have 
\[
 \bbK(n,e)\binom{\nu}\eps = \binom{-\div\big( D_{00}(\nabla \nu+\eps
   \nabla V)+ D_{01} \nabla \eps\big)}{
- \div\big( D_{10}(\nabla \nu+\eps \nabla V)+ D_{11}\nabla \eps\big) +
\nabla V\cdot\big(D_{00}(\nabla \nu+\eps \nabla V)+D_{01}\nabla \eps\big)}.
\]
Inserting $\nu=-\mu/\theta= \pl_n\wh S(n,e)$ and $\eps= \pl_e\wh
S(n,e)=1/\theta$, we see that system \eqref{eq:n-e} with $W\equiv 0$
is a gradient system for the entropy $\calS$ and the Onsager operator
$\bbK$.

\section{Maximum principle and evolution for explicit and special cases}
\label{s:Special}

\subsection{Maximum entropy principle} 
\label{ss:MaxEntropy}
For the general system we may consider all conservation laws using the projection
$\calC_{\bbS^\bot}$ as defined in \eqref{eq:calP--bbS-bot}. Hence, for
given values of 
\begin{equation}\label{cons.laws}
\calC_{\bbS^\bot}(\bfu)=\int_\Omega \Proj  \bfu(x)\dd x =: \bfC_0\in \bbS^\bot \quad \text{and} \quad 
\calE(\bfu,e)= \int_\Omega e(x) \dd x =: E_0
\end{equation}
we maximize the concave function
\[
(\bfu,e) \mapsto \widehat\calS(\bfu,e).
\]
Under reasonable weak additional assumptions, we obtain a unique
maximizer $(\bfu^*,e^*)$, called thermodynamical equilibrium.

If $\bfw(e)$ is independent of $x\in \Omega$, then
$(\bfu^*,e^*)$ is spatially constant. If $\bfw$ and hence $\wh
S(x,\bfu,e)$ depends on $x\in \Omega$, then $(\bfu^*,e^*)$ is a
nonconstant function on $\Omega$. Nevertheless, the temperature
$\theta^*$ is constant as 
\begin{eqnarray}\label{max.S}
\pl_{\bfu} \wh\calS(x,\bfu^*(x),e^*(x))={\bfSigma}_{\bfu} \Proj \quad \text{and} \quad
1/\theta^* = \pl_e \wh S(x,\bfu^*(x),e^*(x)) =\Sigma_{\ie},
\end{eqnarray}
where $\bfSigma_{\bfu}\in \bbS^\bot$ is the constant Lagrange
multiplier associated with $\calC_{\bbS^\bot}$ and $\Sigma_{\ie} \in
\R$ is the constant Lagrange multiplier associated with
$\calE(\bfu,e)= \int_\Omega e(x) \dd x = E_0$.

\subsection{The case of $x$-independent $\bfw(e)$}

The simplest reaction-diffusion system is obtained if we choose the mobility tensor $\wh
\bbM=-\kappa(\rmD^2\wh S)^{-1}$, where $\kappa>0$ is a scalar. Note that
$\wh \bbM$ has to be a  symmetric tensor (of order 4), so we are not
able to choose $\kappa$ to be a symmetric tensor not commuting with
$\rmD^2 \wh S$. Of course $\kappa$ may depend on $x$, but we assume it
to be constant for simplicity.
Inserting this mobility tensor into \eqref{eq:RDS-u-e} we obtain the system
\begin{equation} 
  \label{eq:RDS2}
\begin{aligned}
\dot \bfu &= \kappa \Delta \bfu + \wh\bbH(\bfu,e) \pl_{\bfu}\wh S(\bfu,e),\\
\dot e & = \kappa \Delta e. 
\end{aligned} 
\end{equation}
Thus, our model, which is thermodynamically consistent,
predicts that the internal energy diffuses independently of the
densities $u_i$, however, there may still be a strong dependence of
the reactions on the internal energy.  

We note that the reaction terms are given via $\wh\bbH$ as in
\eqref{eq:2OnsagerMatrix}, where $\bfu^*$ is replaced by $\bfw(e)$ and
$k^*_r$ can be chosen arbitrarily as a function of $(\bfu,e)$,
giving 
\begin{equation} 
  \label{eq:RDS3}
\wh\bbH(\bfu,e)\pl_{\bfu} \wh S(\bfu,e)= -\sum_{r=1}^R k^*_r(\bfu,e)\big(
\tfrac{\bfu^{\bfalpha^r}}{\bfw(e)^{\bfalpha^r}} - 
\tfrac{\bfu^{\bfbeta^r}}{\bfw(e)^{\bfbeta^r}}\big) \big(\bfalpha^r -
\bfbeta^r \big).
\end{equation}
Imposing sufficient decay to $k^*_r(\bfu,e)$, it is possible to make
these terms globally Lipschitz if necessary. 

The constant steady state is determined as the maximum of $\wh\calS$  subject to the conservation properties \eqref{cons.laws}.
With $\wh \calS$  given by \eqref{S}, the relations \eqref{max.S} become
\begin{eqnarray*}
&&\log \tfrac{u_i^*}{w_i(\ie^*)}=-( \mathbf{\Sigma_{u}} \mathbb{P} )_i=:\log \wt C_i\\
&&s'(\ie^*)+\sum_{i=1}^Iu_i^*\tfrac{w_i'(e^*)}{w_i(e^*)}=\Sigma_e
\end{eqnarray*}
Inserting the first relation into the second one we obtain
\begin{eqnarray*}
s'(\ie^*)+\tfrac{\mathbf{u}^*\cdot \mathbf{b}}{e^*}=s'(\ie^*)+\sum_{i=1}^I\wt C_i w_i'(\ie^*)=\Sigma_\ie\,.
\end{eqnarray*}
The conservation property of the internal energy determines $e^*$,
which then sets the Lagrange multiplier $\Sigma_\ie$.  The other
Lagrange multipliers $ \mathbf{\Sigma_{u}}$, and therefore $\wt C_i$,
are determined through the conservation law for
$\mathbb{P}\mathbf{u}$. The relative entropy functional
\begin{eqnarray*} 
  \calH (\mathbf{u},\ie)=-\wh\calS(\mathbf{u},\ie) +
  \wh\calS(\mathbf{u}^*,\ie^*)+\mathbf{\Sigma_{u}}\int_{\Omega} 
 \mathbb{P}(\mathbf{u}-\mathbf{u}^*)\dd x 
  + \Sigma_e \int_{\Omega}(e-e^*)\dd x
\end{eqnarray*}
decays until the steady state $(\mathbf{u}^*,\ie^*)$ is reached. Using
the relations for the Lagrange multipliers we can reformulate
$\calH $ as follows
\begin{eqnarray*}
  \calH (\mathbf{u},\ie)&=
 &\sum_{i=1}^I\int_\Omega \widetilde C_i w_i(\ie)
 \LB\big(\tfrac{u_i}{\widetilde C_i w_i(\ie)}\big) \dd x -
 \sum_{i=1}^I\int_\Omega \widetilde C_i
 \big(w_i(\ie)-w_i(\ie^*)-w_i'(\ie^*)(\ie-\ie^*)\big) \dd x  \\
  &&\qquad - \sum_{i=1}^I\int_\Omega
  \big(s(\ie)-s(\ie^*)-s'(\ie^*)(\ie-\ie^*)\big)\dd x \,.
\end{eqnarray*}
This relation holds generally for the entropy $\wh {\mathcal S}$ with
steady states being the maximizers under the conservation laws
\eqref{cons.laws}. We see that due to the concavity of $s$ and $w_i$
with respect to $\ie$ also the last two terms give nonnegative
contributions to the relative entropy.

With the particular choice 
\[
w_i(\ie)=C_i \ie^{b_i}\,, \qquad s(\ie)=c \ie^\sigma \,,
\]
we can simplify the relative entropy to 
\begin{align}
\calH (\mathbf{u},\ie)&=\sum_{i=1}^I\int_\Omega \widetilde C_i
w_i(e) \LB\big(\frac{u_i}{\widetilde C_i w_i}\big)
\dd x + \sum_{i=1}^I\int_\Omega \frac{C_i}{(\ie^*)^{1-b_i}}\big(b_i \ie
-\ie^{b_i}(\ie^*)^{b_i-1}+ (1{-}b_i)e^*\big)\dd x  \nonumber\\ 
\label{F.ind.gen}&\qquad +  \int_\Omega\frac{c}{(\ie^*)^{1-\sigma}}\big(\sigma  \ie -\ie^{\sigma}(\ie^*)^{\sigma-1}+
(1{-}\sigma)e^*\big)\dd x \,, 
\end{align}
where the nonnegativity of the last two integrands can be checked
directly by applying Young's inequality.

\subsection{The case of $x$-dependent $\bfw(x,e)$}
\label{sec.x.dep}

We let again $\wh S$ be of the form \eqref{S}, where we now assume
\begin{equation}\label{w.V.gen}
w_i(x,e) = \exp(-V_i(x)) e^{b_i},
\end{equation}
for suitable potentials $V_i:\Omega \to \R$ with $\Omega\subset
\mathbb{R}^d$.  In the case of potentials being involved, the
stationary state is not constant anymore.  Therefore an appropriate
choice of the energy density $s(x,\ie)$ is more delicate.  In order
for $\wh S$ to possess a maximum satisfying the required conservation
properties, we need the equations \eqref{max.S} to hold for the steady
state. As above the $x-$dependent stationary state $(n^*,p^*,e^*)$ has
to satisfy
\begin{eqnarray}
  \label{max.S.x.u}
  &&\log \frac{u_i^*}{w_i(x,\ie^*)}=-(\mathbf{\Sigma_{u}}\mathbb{P}
  )_i=:\log \wt C_i\,,\\ 
\label{max.S.x}&& s'(x,\ie^*)+\sum_{i=1}^I\wt C_i
w_i'(x,\ie^*)=\Sigma_\ie ,
\end{eqnarray}
where here and in the following $s'(x,\ie)=\pl_\ie s(x,e)$ and
$w_i'(x,e)=\pl_e w_i(x,\ie)$.
We give two generalizations for $s(x,e)$ of the power law form in
\eqref{s} that allow us to deduce the existence of a maximizer of the
entropy $\widehat {\mathcal S}$. The first one is given by
\begin{equation}
  \label{s.x.1} 
s(x,\ie)=c \ie^\sigma\gamma^{1-\sigma}\,,\qquad \sigma \in (0,1)\,,
\end{equation}
where $\gamma(x)\geq 0$ is integrable and w.l.o.g. assumed to be
normalized such that
\begin{equation}\label{gamma.int}
\int_\Omega \gamma (x) \dd x =\int_\Omega \ie \, \dd x = E_0\,.
\end{equation}
Then condition \eqref{max.S.x} becomes
\[ 
c \sigma \big(\tfrac{\ie^*}{\gamma}\big)^{1-\sigma}+\sum_{i=1}^I\wt
C_i b_i (\ie^*)^{b_i-1}\exp(-V_i)=\Sigma_{\ie}\,.
\]
Both terms on the left hand side are nonnegative. In general the terms
cannot balance each other such that the sum of both terms adds up to a
constant. Hence each of them will have to be constant individually,
showing that
\[\ie^*(x)=\gamma(x)\]
 and restricting the choices of the potentials $V_i$ and  exponents $b_i$.
This leads to the relative entropy functional 
\begin{align}
\calH (\mathbf{u},\ie)&=\sum_{i=1}^I\int_\Omega \widetilde C_i
w_i(x,e)  \LB\big(\tfrac{u_i}{\widetilde C_i w_i}\big) \dd x \nonumber\\
&\quad + \sum_{i=1}^I\int_\Omega  \exp(-V_i(x))(\ie^*)^{b_i-1}\big(b_i \ie -\ie^{b_i}(\ie^*)^{b_i-1}+ (1-b_i)e^*\big)\dd x  \nonumber\\
\label{F.ind}&\quad + c \int_\Omega \big(\sigma
\ie -\ie^{\sigma}(\ie^*)^{\sigma-1}+ (1-\sigma)e^*\big)\dd x \,. 
\end{align}

Another generalization of the power law in \eqref{s} motivated by
\eqref{max.S.x} is given by 
\begin{eqnarray*}
s(x,\ie)=\sum_{i=1}^I c_i w_i(x,e)\,, \qquad \textnormal{for} \ \ c_i\geq 0\,.
\end{eqnarray*}
The steady state relation \eqref{max.S.x} then reduces to 
\begin{eqnarray}\label{ass.gen.2}
\sum_{i=1}^I (c_i+\wt C_i)  w_i'(e^*,x)=\sum_{i=1}^I (c_i+\wt C_i) b_i \ie^{b_i-1} \exp(-V_i)=\Sigma_e\,,
\end{eqnarray}
which relates the stationary state $\ie^*$ to the potentials $V_i$
and, again, induces a restriction on the choices of $b_i$ and
$V_i$. Relation \eqref{ass.gen.2} allows us to rewrite the relative
entropy as follows
\begin{align*}
\calH(\mathbf{u},e) &= - \wh{\cal S}(\mathbf{u},e)+\wh {\cal S}(\mathbf{u}^*,e^*) +{\bfSigma_u} \int_{\mathbb{R}^d} \mathbb{P} (\mathbf{u}-\mathbf{u}^*) \dd x +\Sigma_e\int_{\mathbb{R}^d} (e-e^*) \dd x\\
&=\sum_{i=1}^I \int_\Omega \wt C_i w_i(e) \LB\big(\frac{ u_i}{\wt C_i
  w_i(e)}\big) \dd x\\
&\quad + \sum_{i=1}^I(c_i {+}\wt C_i)\int_{\mathbb{R}^d}
(e^*)^{b_i-1}\exp(-V_i)\left(b_i e - e^{b_i} (e^*)^{1-b_i} + 
 (1 {-} b_i) e^*\right) \dd x   
\end{align*}

Assuming now that $s(x,\ie)$ is chosen appropriately such that the entropy $\wh {\cal S}$ can be maximized under the conservation laws,
we derive the corresponding evolutionary system according to \eqref{eq:RDS-u-e}. We therefore first differentiate
\[
\nabla \binom{\pl_{\bfu} \wh S(x,\bfu,e)}{\pl_e \wh S(x,\bfu,e)} = \rmD^2
\wh S(x,\bfu,e) \nabla \binom{\bfu}e +\nabla_{(x)} \binom{ -\bfV}{s'(x,\ie)},
\]
where $D^2 \wh {\cal S}$ denotes as above the Hessian of $\wh {\cal S}$ with respect to $(\mathbf{u},\ie)$ and here and in the following we denote 
\[
\nabla_{(x)} =\nabla \bigg|_{\ie=\textnormal{const.}}\,.
\]
The choice $\wh\bbM= - \kappa ( \rmD^2 \wh S)^{-1}$ for the mobility tensor
leads to the system
\begin{equation} 
  \label{eq:RDS4}
\begin{aligned}
\dot \bfu &= \kappa \big(\Delta \bfu +\div \bfD^\bfu  \big) + 
\wh\bbH(\bfu,e) \pl_{\bfu}\wh S(\bfu,e),\\
\dot e & = \kappa \big(\Delta e + \div \bfd^e\big) ,
\end{aligned} 
\end{equation}
where the drift fluxes $\bfD^\bfu$ and $\bfd^e$ have the form 
\begin{eqnarray*}
\bfD^\bfu_{ik} &=& u_i\big(\pl_{x_k} V_i+ \sum_{j=1}^J \tfrac{u_j b_j
  b_i}{M(\bfu,\ie)} \pl_{x_k} V_j  - b_i \tfrac{e}{M(\mathbf{u},\ie)}\pl_{(x_k)} s'(x,\ie)\big)\\\
\bfd^\ie_k &=& \ie\sum_{j=1}^I \tfrac{b_j u_j}{M(\bfu,\ie)} \pl_{x_k} V_j - \tfrac{\ie^2}{M(\mathbf{u},\ie)}\pl_{(x_k)} s'(x,\ie). 
\end{eqnarray*}
Here and in the following $M(\bfu, \ie )=-\ie^2  s''(\ie)+ \sum_{i=1}^I u_i(b_i{-}b_i^2)$ and, as above,
$\pl_{(x_k)} s'(x,\ie)$ denotes the partial derivative of $s'$ with respect to $x_k$ while $\ie$ is kept constant. 
To see how the drift fluxes arise, we note that
\[
- \rmD^2 \wh S= \bma{cc} \delta(\bfu)^{-1} & - \tfrac1\ie \bfb\\ -
\tfrac1e \bfb^\top & \tfrac{-\ie^2s''(\ie)+ \bfb\cdot \bfu}{e^2}\ema 
\]
with  $\delta(\bfu)=\mathrm{diag}(\bfu)\in \R^{I\ti I}$. The inverse can then be calculated as
\[
- (\rmD^2 \wh S)^{-1} = 
\bma{cc} \delta(\bfu)+\frac1{M(\bfu,\ie)}
     (\delta(\bfu)\bfb){\otimes}( \delta(\bfu) \bfb)
  & \tfrac{e}{M(\bfu,\ie)} \delta(\bfu) \bfb
\\ 
\tfrac{e}{M(\bfu,\ie)} ( \delta(\bfu) \bfb)^\top &
  \tfrac{\ie^2}{ M(\bfu,\ie)}\ema \,.
\]


\subsection{An  $x$-dependent bipolar model with semiconductor-type  reactions}
\label{sec.x.dep.0}

We now write down the simplest system with two species, electrons and
holes, with the semiconductor-type reaction $X_n+X_p \REVER
\emptyset$.  The variables $n,p$ and $e$ stand for the density of
electrons, holes, and the internal energy. We set
\[
w_n(x,e)=\exp(-V_n(x))\sqrt{e}\,,\qquad w_p(x,\ie)=\exp(-V_p(x))
  \sqrt{\ie}\,,
\] 
and obtain, using $b\LB(a/b)=\LB(a) -a\ln b +b-1$,  
\begin{align} \nonumber
  \wh S(x,n,p,\ie)&= s(x,\ie) -
  w_n(x,\ie)\LB\big(\tfrac{n}{w_n(x,\ie)}\big) 
  -w_p(x,\ie)\LB\big(\tfrac{p}{w_p(x,\ie)}\big)  \\
\label{eq:whS}
  &= \wh s(x,\ie) +\tfrac12(n{+}p)\log \ie - \LB(n) -
  \LB(p) - V_n(x)    n - V_p(x) p, 
\end{align}
where $\wh s(x,e)=s(x,e)-w_n(x,e)-w_p(x,e)+2$.
The conserved quantities we denote by
\begin{equation}\label{cons.laws.0}
  \calC(n,p,e)=\int_\Omega ( n{-}p)\, \dd x = :\bfC_0\quad \text{and} \quad 
  \calE(n,p,e)=\int_\Omega \ie \,\dd x = :E_0 .
\end{equation}
For given $\bfC_0$, $E_0>0$ and an appropriate choice of $s(x,\ie)$ as
discussed above, there is a unique maximizer of $\wh
\calS(n,p,\ie)=\int_\Omega \wh S(x,n,p,\ie)\dd x$ subject to the
constraint $\calC(n,p,\ie)=\bfC_0$ and $\calE(n,p,\ie)=E_0$.

We choose the mobility tensor as $\wh \bbM=-\kappa (\rmD^2 \wh
S)^{-1}$. To close the dynamics it remains to set up the reaction
terms.  The typical form of the semiconductor reactions is given by
the Read-Shockley-Hall term $k(n_I^2 - np)$ with $k=k(x,n,p,\theta)$
the positive reaction rate and $n_I = n_I(\theta)$ the intrinsic
carrier density, see \cite{MarRinSchm90}.  The dependence of the
intrinsic density on the temperature is modeled as $n_I(\theta) = c_1
\theta^{3/2} \exp(-c_2/\theta)$ for some positive constants $c_1$,
$c_2$. Observe that $n_I$ is an increasing function of $\theta$.  For
the sake of simplicity of our forthcoming analysis, we set $n_I$ to
depend linearly on $e$.  This leads to the form
\[
k(\ie-\rho(x)np), \qquad\textnormal{with} \quad  \rho(x)=\exp(V_n(x)+V_p(x))\,.
\]
In order to see that this reactive term corresponds to the symmetric
form $\widehat \bbH\bfmu$ as in \eqref{eq:RDS3}, where we recall the
notation $\bfmu=\partial_{(n,p)} {\widehat{\mathcal{S}}}$, we first
rewrite
\begin{eqnarray*}
k(\rho(x) n p - \ie)= r_0(n,p,\ie)\big(\ln \tfrac{ n }{w_n(\ie)}+\ln \tfrac{p}{w_p(\ie)}\big),\quad 
\textnormal{with}
\quad r_0=k \ie\,\big(\tfrac{ n p}{w_nw_p} - 1\big)/\ln \tfrac{ n p}{w_nw_p}\geq 0\,.
\end{eqnarray*}
Thus, we have
\[
   k(\ie - \rho(x) n p )\binom{1}{1}=\wh\bbH \bfmu, \quad \textnormal{where} \quad  \wh\bbH=r_0 \binom{1\ 1}{1\ 1}
\]
and we obtain the evolution equations 
\begin{align}
\nonumber
\dot n& =\kappa \Big(\Delta n + \div\big(n\nabla V_n +
\tfrac{n}{N}\big(n\nabla V_n {+}
 p \nabla V_p - 2 \ie \nabla_{(x)} s'(x,\ie) \big) \Big) \!
      + k(x,n,p,e)\big( e {-}\rho(x) np\big) , \\
\nonumber
\dot p& =\kappa \Big(\Delta p + \div\big(p\nabla V_p + \tfrac{p}{N}\big(n\nabla V_n+
 p \nabla V_p - 2 \ie \nabla_{(x)} s'(x,\ie) \big)\Big) 
          + k(x,n,p,e)\big( e {-}\rho(x) np\big),\\
\nonumber
\dot e& =\kappa \Big(\Delta e + \div\big(\tfrac{2e}{N} \big(n\nabla V_n+
 p \nabla V_p - 2 \ie \nabla_{(x)} s'(x,\ie) \big)\Big),\\
0&= \nabla p\cdot \nu= \nabla n\cdot \nu=\nabla e\cdot \nu \quad
\text{ on }\pl \Omega,  \label{eq:4.npe}
\end{align}
where $N(n,p,e)=4M(n,p,e)=-4 e^2 s''(x,e)+n+p$ and $\rho(x) =
\exp(V_n(x){+}V_p(x))$.  The reaction terms arise as in
\eqref{eq:RDS3} using $\bfalpha_r=(1,1)^T,
\bfbeta_r=(0,0)^T$.  The reaction coefficient $k(x,n,p,e)>0$ can
be chosen arbitrarily, for instance the Read-Shockley-Hall
generation-recombination model gives 
\ $k=k_0/(1{+}c_n n + c_p p)$ for positive constants $k_0,c_n$, and
$c_p$.


\section{Global existence of solutions and convergence to equilibrium for particular systems}\label{s:Convergence}

In this section we derive entropy entropy-production inequalities to
prove convergence towards the stationary state in two particular
systems. Our approach is inspired by the work \cite{DiF-Fel-Mar} and
uses logarithmic Sobolev inequalities to bound the entropy in terms of
the entropy production.  With the known functional inequalities this
is, even in the $x$-independent case, not possible for an entropy term
of the form $s(\ie)=c\ln e$.  However, the alternative choice
$s(\ie)=c \ie^{\sigma}$ with $c\geq0$ satisfies all required
thermodynamical properties and allows us to establish exponential
decay of the corresponding relative entropy. In the following we
therefore focus on the cases
\begin{equation}\label{equ.s}
s(\ie)=c\sqrt{e}\,,\quad \textnormal{resp.}\quad
s(x,\ie)=c\sqrt{\ie}\sqrt{\gamma}\,, 
\end{equation} 
where $\gamma(x)\geq 0$ verifying \eqref{gamma.int} corresponds to
$e^*(x)$ in the $x-$dependent case.

\subsection{Global existence and long time behavior in the $x$-independent case} \label{ss:x-indep}

\subsubsection{Steady states, relative entropy and evolution equations}
We consider a bipolar model with the semiconductor-type reaction $X_n+X_p \REVER \emptyset$. 
We put $\bfu=(n,p)$, so that the variables are $(n,p,e)$ for the density of electrons, holes,
and internal energy, respectively. For $C_p$, $C_n>0$ we set
\begin{eqnarray*}
w_n(e)=C_n\sqrt{e},\qquad w_p(e)=C_p\sqrt{e}\,.
\end{eqnarray*}
The derivation of the evolutionary equations from the entropy
functional leads to the following semilinear reaction-diffusion system
\begin{subequations}
\label{eq:rdM}
\(
    \partial_t n &=& \kappa \laplace n + k(\ie-np),  \label{rdM1} \\
    \partial_t p &=& \kappa \laplace p + k(\ie-np),  \label{rdM2} \\
    \partial_t \ie &=& \kappa \laplace\ie   \label{rdM3}
\)
\end{subequations}
for the constant diffusion coefficient $\kappa>0$.
The system is posed on the $d$-dimensional torus $\calT^d$ (i.e., with periodic boundary conditions),
rescaled such that $|\calT^d|=1$.

We shall now investigate the steady states and define the relative entropy. 
We therefore maximize the entropy 
\[
\wh S(n,p,\ie)=c\sqrt{\ie}- \LB(n) -\LB(p) + n 
\ln w_n(\ie) + p \ln w_p(\ie)
\]
on $\calT^d$ under the conservation laws
\begin{eqnarray}\label{normalizations}
 \int_{\calT^d} (n-p) \dd x=  \int_{\calT^d} (n_0-p_0) \dd x = \bfC_0\,,\qquad \int_{\calT^d} \ie \dd x = \int_{\calT^d} \ie_0 \dd x =E_0\,.
\end{eqnarray}
Introducing the Lagrange multipliers $\Sigma_0$ and  $\Sigma_{\ie}$, the steady state $(n^*,p^*,\ie^*)$ is
determined via
\begin{eqnarray*}
-\partial_n\wh S +\Sigma_0 =0\,,\qquad -\partial_p\wh S -\Sigma_0=0\,,\qquad -\partial_\ie\wh S +\Sigma_{\ie}=0\,,
\end{eqnarray*}
implying the following relations
 \begin{eqnarray*}
&&n^*=w_n(\ie^*)\exp(-\Sigma_0)=C_n\sqrt{\ie^*}\exp(-\Sigma_0)=\widetilde C_n \, C_n \sqrt{\ie^*}\,,\\
&&p^*=w_p(\ie^*)\exp(\Sigma_0)=C_p\sqrt{\ie^*}\exp(\Sigma_0)=\widetilde C_p \, C_p \sqrt{\ie^*}\,,\\
&&\tfrac{n^*+p^*}{2\ie^*}+\tfrac{c}{2\sqrt{\ie^*}}=\Sigma_{\ie}\,.
\end{eqnarray*}
The detailed balance condition requires
\begin{equation}\label{det.bal.np}
n^*p^*=e^*, \qquad \textnormal{implying} \quad C_nC_p=1\,.
\end{equation}
Moreover, from the conservation property of $n-p$ we have
\[
   \tfrac{1}{2\sqrt{\ie^*}}(n^*{-}p^*)=\tfrac{1}{2}\big(\exp(\log C_n
   {-} \Sigma_0)- \exp(-(\log C_n {-}\Sigma_0))\big)
   =\sinh({-}\log C_n + \Sigma_0) =\tfrac{\bfC_0}{2\sqrt{\ie^*}}.
\]
We note that the pair of constants $C_n\widetilde C_n=C_n\exp(-\Sigma_0), C_p\widetilde C_p = C_p \exp(\Sigma_0)$ 
satisfies the detailed balance condition \eqref{det.bal.np}. Therefore we shall in the following assume w.l.o.g.  $\widetilde C_n=\widetilde C_p=1$, which amounts to setting $\Sigma_0=0$. This can be understood in the sense that the constants $C_n$ and $C_p=C_n^{-1}$ are already the right weights in $w_n$ and $w_p$ for the stationary states $n^*$ and $p^*$, i.e.
\[
   n^*=C_n \sqrt{\ie^*}, \qquad p^*=C_p \sqrt{\ie^*}.
\]
The constant steady state, and therefore also the constants $C_n$ and $C_p$, are uniquely determined through the conservation laws and the detailed balance condition, since 
\[
\ie^*=E_0\,,\qquad C_n-C_p=C_n-\tfrac{1}{C_n} = \tfrac{\bfC_0}{\sqrt{\ie^*}}\,.
\]
The Lagrange multiplier $\Sigma_{\ie}$ is given by
\begin{equation}\label{Sigma}
\Sigma_{\ie} =\frac{C_n+C_p+c}{2\sqrt{\ie^*} }\,. 
\end{equation}
Recalling $\Sigma_0=0$, the convex relative entropy reads as follows
\begin{eqnarray}
\calH (n,p,\ie)
\label{F1}&=&-\wh S(n,p,\ie) + \wh S(n^*,p^*,\ie^*) +
\Sigma_{\ie}\int_{\calT^d}(\ie-\ie^*)\dd x\,,
\end{eqnarray}
which has the crucial property $\calH(n,p,\ie)\geq
\calH(n^*,p^*,e^*) =0$.
Direct computation or using $\widetilde C_i =1, \sigma=b_i=1/2$ in
\eqref{F.ind.gen} shows that it can also be formulated as
\begin{eqnarray}
 \calH (n,p,\ie)=\int_{\calT^d}
w_n(\ie)\LB\big(\tfrac{n}{w_n(\ie)}\big) 
+ w_p(\ie)\LB\big(\tfrac{p}{w_p(\ie)}\big) 
+\tfrac{c+C_n+C_p}{2\sqrt{\ie^*}}
(\sqrt{\ie}-\sqrt{\ie^*})^2\dd x\,. \label{ent.F}
\end{eqnarray}
Note that one can equivalently derive the evolution equations
\eqref{eq:rdM} starting from $- \calH (n,p,e)$ instead of
$\wh \calS(n,p,\ie)$.

We will prove exponential convergence of solutions to
\eqref{eq:rdM} based on the dissipation relation
\begin{equation}
   \label{diss.rel}
\tot{}{t} \calH(n,p,\ie) = - \calP(n,p,\ie),\end{equation}
where the entropy production potential \eqref{ent.prod} reduces to
\[
    \calP(n,p,\ie) := \int_{\calT^d} \kappa \Big( n \left|\grad\ln
      \tfrac{n}{\sqrt\ie}\right|^2 + p \left|\grad\ln
      \tfrac{p}{\sqrt\ie}\right|^2  + \tfrac{N}{4}  \left|\tfrac{\grad
        \ie}{\ie}\right|^2 \Big) +  k(np{-}\ie) \ln\tfrac{np}{\ie} \d x\,, 
\]
with $N =N(n,p,e)= n+p+c\sqrt{\ie}$.

\subsubsection{Global existence of solutions}
We prove global well-posedness of the system \eqref{eq:rdM} on the
torus $\calT^d$ subject to the nonnegative initial datum
$(n_0,p_0,\ie_0)\in L^\infty(\calT^d)^3$ and the conservation laws
\eqref{normalizations} with $E_0=\ie^*$.  For the forthcoming analysis
we make the assumption that the initial energy is bounded from above
and away from zero, i.e. we assume there exist constants
$0<\underline\ie <\bar\ie$ such that
\begin{equation}\label{ass.e0}
    0 < \underline\ie \leq \ie_0(x) \leq \bar\ie \qquad\mbox{for all } x\in \calT^d.
\end{equation}
Then the maximum principle for the solutions $\ie$ of \eqref{rdM3} implies
\( \label{e_bbelow}
    0 < \underline\ie \leq \ie(t,x) \leq \bar\ie \qquad\mbox{for all } t\geq 0\mbox{ and } x\in \calT^d.
\)
We moreover let the reaction coefficient be bounded from above and away from zero by
\[
   0<\underbar k \leq k \leq \bar k\,.
\]

Solutions of the system \eqref{eq:rdM} preserve nonnegativity since the nonlinearities
on the right-hand side satisfy the \emph{quasi-positivity} condition, see e.g.\ Lemma 1.1 of \cite{Pierre}.
The maximum principle for 
\[
   \partial_t (n+p) - \kappa \laplace (n+p) = 2k(\ie-np) \leq 2\bar k\bar e
\]
gives
\[
   \sup_{t\in[0,T]} \Norm{n(t)+p(t)}_{L^\infty(\calT^d)} \leq 2\bar k \bar e T \Norm{n_0+p_0}_{L^\infty(\calT^d)},
\]
and by nonnegativity,
\[
   \sup_{t\in[0,T]} \max\{\Norm{n(t)}_{L^\infty(\calT^d)}, \Norm{p(t)}_{L^\infty(\calT^d)}\} \leq 2 \bar k \bar e T \Norm{n_0+p_0}_{L^\infty(\calT^d)}.
\]
This immediately implies the existence of global classical solutions, see \cite{Pierre}.
Moreover, the following slightly refined analysis shows that the $L^1$-norms of $n$ and $p$ are  uniformly bounded.

\begin{lemma} \label{lemma:L1bdd-x-indep} Let $(n,p,\ie)$ be a
  solution of the system \eqref{eq:rdM} subject to the nonnegative
  initial data $(n_0,p_0,\ie_0)$ satisfying assumption \eqref{ass.e0},
  the normalizations \eqref{normalizations} and
  $\calH(n_0,p_0,\ie_0)<\infty$.  Then 
\( \label{bound_L1_np}
  \sup_{t\geq 0} \big(\Norm{n}_{L^1(\calT^d)} +
  \Norm{p}_{L^1(\calT^d)} \big) < \infty.  \)
\end{lemma}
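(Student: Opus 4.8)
The plan is to turn the monotonicity of the relative entropy $\calH$ into a uniform-in-time $L^1$ bound by a Jensen-type argument that exploits the finite measure $|\calT^d|=1$ of the torus, together with the energy bounds \eqref{e_bbelow}.

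First I would record that along the (already constructed) classical solutions the dissipation relation \eqref{diss.rel} gives $\frac{\rmd}{\rmd t}\calH(n,p,e)=-\calP(n,p,e)\le 0$: the three gradient terms in $\calP$ are nonnegative (using $N=n+p+c\sqrt e\ge 0$), while the reaction term $k(np-e)\log\tfrac{np}{e}$ is nonnegative because $np-e$ and $\log\tfrac{np}{e}$ always carry the same sign. Hence $\calH(n(t),p(t),e(t))\le\calH(n_0,p_0,e_0)=:\calH_0<\infty$ for every $t\ge 0$. Since by \eqref{ent.F} the functional $\calH$ is a sum of three nonnegative integrals, each is individually bounded by $\calH_0$; in particular $\int_{\calT^d}w_n(e)\LB\big(\tfrac{n}{w_n(e)}\big)\dd x\le\calH_0$, and similarly for $p$.

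Next I would use the energy bounds \eqref{e_bbelow}: since $w_n(e)=C_n\sqrt e$ lies between the positive constants $C_n\sqrt{\underline e}$ and $C_n\sqrt{\bar e}$, the quantity $L:=\sup_{t,x}\big|\log w_n(e)\big|$ is finite. Writing $w_n(e)\LB(n/w_n(e))=n\log n-n\log w_n(e)-n+w_n(e)$ and integrating, with $m:=\Norm{n}_{L^1(\calT^d)}$, the bound above yields $\int_{\calT^d}n\log n\,\dd x\le\calH_0+(L+1)\,m$, where I simply discard the nonnegative term $\int_{\calT^d} w_n(e)\dd x$. On the other hand, Jensen's inequality applied to the convex map $s\mapsto s\log s$ on the probability space $\calT^d$ gives $\int_{\calT^d}n\log n\,\dd x\ge m\log m$. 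Combining the two estimates produces $m\log m\le\calH_0+(L+1)\,m$, i.e.\ $m(\log m-L-1)\le\calH_0$. As the left-hand side tends to $+\infty$ as $m\to\infty$, this bounds $m$ by a constant depending only on $\calH_0$ and $L$, hence only on the data and uniformly in $t$. Running the identical argument for $p$ (or, alternatively, using the conserved quantity $\int_{\calT^d}(n-p)\dd x=\bfC_0$ to pass from the bound on $n$ to one on $p$) then yields \eqref{bound_L1_np}.

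The step I expect to carry the real content is the passage from the entropy bound to the $L^1$ bound: it works precisely because the torus has finite measure, so that Jensen converts $\int n\log n$ into the superlinear quantity $m\log m$, and because the energy bounds keep $\log w_n(e)$ uniformly bounded, so that the correction is only linear in $m$ and is absorbed by $m\log m$. The remaining point requiring care is the rigorous justification of the entropy dissipation identity along the constructed solutions (differentiating $\calH$ under the integral and integrating by parts on $\calT^d$), which is where the a priori regularity from the global existence argument and the hypothesis $\calH_0<\infty$ enter.
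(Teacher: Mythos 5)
Your proof is correct, but it follows a genuinely different route from the paper. The paper's argument is purely at the level of the evolution equations: it sets $\xi=n-p$, notes that $\xi$ solves the heat equation and is therefore uniformly bounded, integrates the $p$-equation over $\calT^d$ to get the differential inequality $\frac{\rmd}{\rmd t}\int_{\calT^d} p\,\dd x \le k\big[e^*-\big(\int_{\calT^d}p\,\dd x\big)^2+C\int_{\calT^d}p\,\dd x\big]$ (via Cauchy--Schwarz applied to $\int p\,\dd x\le(\int p^2\,\dd x)^{1/2}$), and concludes by ODE comparison since the right-hand side is negative for large $\Norm{p}_{L^1}$. Your argument instead uses only the monotonicity of $\calH$ together with the classical ``$L\log L$ controls $L^1$ on a finite measure space'' mechanism: the entropy bound gives $\int_{\calT^d} n\log n\,\dd x\le \calH_0+(L{+}1)m$, and Jensen on the normalized torus gives the superlinear lower bound $m\log m$, which forces $m$ to be bounded. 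Each approach buys something: the paper's proof does not need $\calH_0<\infty$ or the energy bounds \eqref{e_bbelow} at all, but it leans on the specific quadratic structure of the reaction term $k(e-np)$ and on the $L^\infty$ bound for $n_0-p_0$; yours is insensitive to the precise form of the reaction (any entropy-dissipating reaction would do) and needs only the stated hypotheses of the lemma, at the price of invoking the dissipation relation \eqref{diss.rel} along the solution --- which, as you correctly flag, is the one step requiring the a priori regularity from the existence theory. All the individual steps you use check out: $\calP\ge0$ (the sign of $k(np-e)\log\tfrac{np}{e}$ and $N\ge0$), the termwise nonnegativity of \eqref{ent.F}, the identity $w_n\LB(n/w_n)=n\log n-n\log w_n-n+w_n$, the uniform bound on $\log w_n(e)$ from \eqref{e_bbelow}, and the Jensen step using $|\calT^d|=1$.
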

\begin{proof}
Defining $\xi:=n-p$, equations \eqref{eq:rdM} give
\(   \label{heat-xi}
    \partial_t \xi = \kappa \laplace \xi.
\)
Therefore, we may write \eqref{rdM2} as
$
   \partial_t p = \kappa \laplace p + k(\ie-p(p+\xi)),
$
and an integration over $\calT^d$ gives
\[
    \frac{\d}{\d t} \int_{\calT^d} p\, \d x = k\Big( \ie^* - \int_{\calT^d} p(p+\xi)\d x \Big),
\]
where we used the mass conservation of $\ie$.
The Cauchy-Schwarz inequality
$\int_{\calT^d} p\, \d x \leq \big(\int_{\calT^d} p^2 \, \dd x\big)^{1/2}$
and the global boundedness $|\xi(t,x)|\leq C$ (note that $\xi$ solves the heat equation \eqref{heat-xi}) gives
\[
    \frac{\d}{\d t} \int_{\calT^d} p \, \d x \leq k\left[ \ie^* - \Big(\int_{\calT^d} p \,\d x\Big)^2 + C\int_{\calT^d} p \, \d x \right],
\]
which implies
$    \sup_{t\geq 0} \int_{\calT^d} p \, \d x < \infty$.
Repeating the same steps for $n$, we obtain \eqref{bound_L1_np}.
\end{proof}

\subsubsection{Convergence to equilibrium}\label{ss:CtE:x-indep}

In order to prove the convergence of solutions to the stationary state
we have to show the decay of the relative entropy.  We shall
therefore split the dissipation term into four nonnegative parts,
where the ones resulting from the diffusion terms in the dynamics are
related to $-\calH$ by using the logarithmic Sobolev type of
inequalities \eqref{ineq:logSobolev} and the Sobolev imbedding
theorems \eqref{ineq:Sobolev}, where care has to be taken since the
norms $\Norm{n}_{L^1}$, $\Norm{p}_{L^1}$ are not conserved (in
contrast to $\Norm{\ie}_{L^1}$), see also
\cite{MiHaMa15UDER,DiF-Fel-Mar}. In particular, the difficulty of
treating the new mixed dissipation terms arising here involving the
heat component is overcome by applying the log-Sobolev inequality
with respect to the measure $\ie \d x$. This clearly
requires $\ie$ to be bounded uniformly from above and below by a
positive constant.  In order to control the resulting remainder of the
reactive term we proceed in a similar fashion to \cite{DiF-Fel-Mar},
where the case of a semiconductor reaction-diffusion system with a
confining potential but without a heat component was investigated.

\begin{proposition}[Entropy entropy-production estimate I]\label{prop:EEP}
  For all nonnegative\linebreak[4] $(n,p,\ie)$, for which $\calH(n,p,\ie)$,
  $\calP(n,p,\ie) < \infty$ and $\ie$ satisfies \eqref{e_bbelow}, there
  exists a $K$,
 \[  
K=K(\kappa,\Norm{n}_{L^1},\Norm{p}_{L^1},n^*,p^*,\underline{\ie},\bar{e},e^{*})>0
\]
such that the following estimate holds
\(   
\label{entropy-entropy_dis}
    \calH(n,p,\ie) \leq K \calP(n,p,\ie)\,.
\)
The explicit dependence of $K$ on its arguments is given in \eqref{K}.
\end{proposition}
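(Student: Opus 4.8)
The plan is to run the entropy method in the spirit of \cite{DiF-Fel-Mar,MiHaMa15UDER}, splitting the entropy production into its diffusive gradient contributions and the reactive contribution
\[
\calP_{\mafo{react}}(n,p,\ie) = \int_{\calT^d} k\,(np{-}\ie)\log\tfrac{np}{\ie}\,\dd x \geq 0,
\]
and matching each piece against a piece of $\calH$. Since $\ie^*=E_0=\int_{\calT^d}\ie\,\dd x$ is exactly the spatial average of $\ie$, the energy term $\tfrac{c+C_n+C_p}{2\sqrt{\ie^*}}(\sqrt\ie-\sqrt{\ie^*})^2$ in \eqref{ent.F} is, up to the elementary factor $\tfrac{2\sqrt{\ie^*}}{\sqrt{\ie^*}+\overline{\sqrt\ie}}\in[1,2]$, comparable to the oscillation $\int_{\calT^d}(\sqrt\ie-\overline{\sqrt\ie})^2\,\dd x$. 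This is controlled by the heat part of $\calP$ after the pointwise bound $\tfrac{N}{4}\big|\tfrac{\grad\ie}{\ie}\big|^2\geq \tfrac{c}{\sqrt{\bar\ie}}|\grad\sqrt\ie|^2$ (using $N\geq c\sqrt\ie$ and $\ie\leq\bar\ie$) and the Poincaré inequality on $\calT^d$.

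For the density terms I would rewrite $w_n(\ie)\LB(n/w_n(\ie))=C_n\sqrt\ie\,\LB(\rho_n)$ with $\rho_n=n/(C_n\sqrt\ie)$, and observe that the matching gradient term equals $\kappa C_n\sqrt\ie\,\rho_n|\grad\log\rho_n|^2$, so that entropy and dissipation both live against the weighted measure $\dd\mu=\sqrt\ie\,\dd x$. Writing $M_n=\big(\int_{\calT^d}\rho_n\,\dd\mu\big)/\mu(\calT^d)$ for the $\mu$-mean, the exact identity
\[
\int_{\calT^d}\LB(\rho_n)\,\dd\mu = \int_{\calT^d}\rho_n\log\tfrac{\rho_n}{M_n}\,\dd\mu + \mu(\calT^d)\,\LB(M_n)
\]
separates a microscopic relative entropy from a macroscopic deviation. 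The microscopic part is bounded by the diffusive term through a logarithmic Sobolev inequality \eqref{ineq:logSobolev} for the probability measure $\dd\mu/\mu(\calT^d)$ applied to $\rho_n/M_n$; here assumption \eqref{e_bbelow} is essential, since it makes $\sqrt\ie$ comparable to a constant and yields a log-Sobolev constant uniform in the state. This is precisely the device announced in the text for coping with the mixed diffusion terms, and it handles $p$ identically.

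It then remains to absorb the macroscopic pieces $\mu(\calT^d)\,\LB(M_n)$ and $\mu(\calT^d)\,\LB(M_p)$ into $\calP_{\mafo{react}}$. I would first reduce the pointwise reaction integral to a function of the averaged densities: using $(a{-}b)\log\tfrac{a}{b}\geq0$ together with the already-established control of the oscillations of $n,p,\ie$ by the diffusive terms (and the Sobolev embeddings \eqref{ineq:Sobolev} to control the product $np$), one bounds $\calP_{\mafo{react}}$ from below by $(\overline n\,\overline p-\ie^*)\log\tfrac{\overline n\,\overline p}{\ie^*}$ minus diffusion-controlled remainders. Finally, on the conservation manifold $\overline n-\overline p=\bfC_0$ of \eqref{normalizations} with detailed balance $n^*p^*=\ie^*$ from \eqref{det.bal.np}, writing $\overline n=n^*+\delta$, $\overline p=p^*+\delta$ and using an elementary convexity estimate relates $(\overline n\,\overline p-\ie^*)\log\tfrac{\overline n\,\overline p}{\ie^*}$ to $\LB(\overline n/n^*)+\LB(\overline p/p^*)$, and hence, up to factors controlled by \eqref{e_bbelow}, to $\LB(M_n)+\LB(M_p)$.

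I expect this reaction step to be the main obstacle. Both the reduction of the space-dependent reaction integral to its average (which forces $K$ to depend on $\Norm{n}_{L^1}$, $\Norm{p}_{L^1}$, $\underline\ie$, $\bar\ie$ and relies on the uniform $L^1$-bound of Lemma \ref{lemma:L1bdd-x-indep}) and the final functional inequality linking the averaged reaction term to the macroscopic entropy require careful quantitative bookkeeping if the constant in \eqref{entropy-entropy_dis} is to be made explicit as claimed in \eqref{K}. By contrast, the weighted log-Sobolev step, though conceptually the genuinely new ingredient compared with the isothermal analysis, is routine once \eqref{e_bbelow} is in force; it is the constant-tracking in the reaction estimate where most of the effort lies.
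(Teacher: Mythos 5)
Your plan follows the same overall strategy as the paper's proof: split $\calP$ into diffusive and reactive parts, control the ``microscopic'' relative entropies by logarithmic Sobolev inequalities with respect to energy-weighted measures (this is where \eqref{e_bbelow} enters), control the energy term by the heat part of the production, and absorb the macroscopic deviation $n^*\LB(\bar n/n^*)+p^*\LB(\bar p/p^*)$ into $\calP_R$ via the conservation law \eqref{normalizations}, detailed balance \eqref{det.bal.np}, and an interchange of square roots and integration. Two of your technical choices genuinely differ from the paper and are both valid: (i) you use a \emph{single} weighted log-Sobolev inequality for the measure $\sqrt e\,\dd x$ applied to $\rho_n=n/(C_n\sqrt e)$, whereas the paper splits each density entropy in half and invokes log-Sobolev twice, once for Lebesgue measure and once for $e\,\dd x/e^*$, at the price of consuming the $n$- and $p$-contributions of the heat-conduction term $\tfrac N4|\nabla e/e|^2$; your version is more economical, but note that your macroscopic reference $M_n\propto\bar n/\overline{\sqrt e}$ differs from $\bar n/n^*$ by the factor $\sqrt{e^*}/\overline{\sqrt e}$, and the resulting correction $\LB(M_n)-\LB(\bar n/n^*)$ is only \emph{linear} in $\sqrt{e^*}-\overline{\sqrt e}=\tfrac1{2\sqrt{e^*}}\int(\sqrt e-\sqrt{e^*})^2\dd x$, so it must be explicitly reabsorbed into $\calP_e$ -- doable, but it is an extra step the paper avoids. (ii) For the energy term you use $N\geq c\sqrt e$, the bound $e\leq\bar e$, and Poincar\'e for $\sqrt e$, together with the exact factor $\tfrac{2\sqrt{e^*}}{\sqrt{e^*}+\overline{\sqrt e}}\in[1,2]$; the paper instead uses mass conservation, Jensen, and the Sobolev embedding \eqref{ineq:Sobolev} for $e^{1/4}$. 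Both work.

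The one place where your sketch is too loose to count as a proof is the reaction step. The inequality $(a-b)\log(a/b)\geq 0$ gives no quantitative lower bound on $\calP_R$, and ``Sobolev embeddings to control the product $np$'' is not the mechanism actually needed. The paper's chain is: the elementary inequality \eqref{el.in}, $\ln(y)(y-1)\geq4(\sqrt y-1)^2$ with $y=np/e$, then Jensen's inequality with respect to the measure $e\,\dd x$ to pull out $\big(\int\sqrt{np}\,\dd x-\sqrt{e^*}\big)^2$ at the cost of a $\calP_e$-controlled remainder, then the decomposition \eqref{delta.n} together with $\int\delta_n^2\leq\bar n$ and the Poincar\'e inequality to replace $\int\sqrt{np}\,\dd x$ by $\overline{\sqrt n}\,\overline{\sqrt p}$, and finally the $R_n$-device \eqref{R.n} and Lemmas \ref{aux.1}--\ref{aux.2} to pass to $\big(\sqrt{\bar n\bar p/(n^*p^*)}-1\big)^2$ and back to $\LB(\bar n/n^*)+\LB(\bar p/p^*)$ using $\bar n-\bar p=n^*-p^*$ and $\LB(y)\leq2(1+|\ln y|)(\sqrt y-1)^2$. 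You correctly identify this as the hard part and name the right final ingredients, but the intermediate reduction as you state it would not go through literally; it is precisely this square-root/Jensen machinery that makes the constant in \eqref{K} explicit.
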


Note that due to Lemma \ref{lemma:L1bdd-x-indep}, $K$ is uniformly
bounded along solutions, i.e.\    
\begin{equation}
\label{K.max}
\sup_{t>0} K(\kappa,\Norm{n}_{L^1}(t),\Norm{p}_{L^1}(t),n^*,p^*,\underline{\ie},\bar{e},e^{*}) =: \widehat K <\infty\,.
\end{equation}

\noindent\begin{proof}
Let us denote
\[
    \bar n := \int_{\calT^d} n \d x, \qquad  \bar p := \int_{\calT^d} p \d x.
\]
Using the identity
\[
    \int_{\calT^d} n\ln \big(\frac{n\sqrt{\ie^*}}{n^*\sqrt\ie}\big) \d x =
       \frac12 \int_{\calT^d} n\ln\frac{n}{\bar n} \d x + \frac12 \int_{\calT^d} n\ln\frac{n\ie^*}{\bar n\ie} \d x + \bar n\ln\frac{\bar n}{n^*}
\]
and its analog for $p$, we reformulate the relative entropy \eqref{ent.F} as
\begin{align}
   \calH(n,p,\ie) &= \frac12 \int_{\calT^d} n\ln\frac{n}{\bar n} \d x + \frac12 \int_{\calT^d} n\ln\frac{n\ie^*}{\bar n \ie} \d x
      + n^* \LB\big( \frac{\bar n}{n^*}\big)\nonumber \\
    &\quad + \frac12 \int_{\calT^d} p\ln\frac{p}{\bar p} \d x + \frac12 \int_{\calT^d} p\ln\frac{p\ie^*}{\bar p \ie} \d x
      + p^* \LB\big(\frac{\bar p}{p^*}\big) \nonumber\\
 \label{F.ind.0}   
 &\quad +\frac{c}{2\sqrt{e^*}} \int_{\calT^d} (\sqrt\ie - \sqrt{\ie^* })^2\d x.
\end{align}
Moreover, we split the entropy production into
\[\calP(n,p,\ie)=\kappa(\calP_n+\calP_p+\calP_\ie)+\calP_R\,,\] 
with
\[
    \calP_n = 2 \int_{\calT^d} \big|\grad\sqrt{n}\big|^2\d x + 2
    \int_{\calT^d}  \Big|\grad\sqrt{\tfrac{n\ie^*}{\ie}}\Big|^2
    \frac{\ie }{\ie^*}\d x,&&
\calP_\ie = 4c \int_{\calT^d} \big| \grad\sqrt[4]{\ie} \big|^2 \d x,\\
    \calP_p = 2 \int_{\calT^d} \big|\grad\sqrt{p}\big|^2 \d x + 2 \int_{\calT^d} \Big|\grad\sqrt{\tfrac{p\ie^*}{\ie}}\Big|^2 \frac{\ie }{\ie^*}\d x,
    &&
    \calP_R = - \int_{\calT^d} k(\ie - np) \ln\frac{np}{\ie} \d x.
\]
We apply the log-Sobolev inequality \eqref{ineq:logSobolev} to estimate the first two terms of $\calH(n,p,\ie)$ as
\[
    \frac12 \int_{\calT^d} n\ln\frac{n}{\bar n} \d x \leq \frac{C_{LS}}{2} \int_{\calT^d} \big|\grad\sqrt{n}\big|^2 \d x,
\]
and, using the fact that $\tfrac{\ie\d x}{\ie^*}$ is a probability measure on $\calT^d$, the generalized log-Sobolev inequality (see e.g.\ \cite{AMTU}) to obtain
\[
   \frac12 \int_{\calT^d} n\ln\frac{n\ie^*}{\bar n \ie} \d x 
   \leq \frac{C_{LS}({\ie}/{\ie^*})}{2} \int_{\calT^d} \Big|\grad\sqrt{\tfrac{n\ie^*}{\ie}}\Big|^2\frac{\ie }{\ie^*}\d x.
\]
Here $C_{LS}({e}/{e^*})$ is the log-Sobolev constant for the probability measure $\tfrac{\ie\d x}{\ie^*}$,
which depends on $\bar \ie$ and $\underline\ie$ and approaches the classical log-Sobolev constant $C_{LS}$
as $\ie$ converges to the stationary state.
Consequently, we have the bound
\[
  \frac12 \int_{\calT^d} n\ln\frac{n}{\bar n} \d x + \frac12 \int_{\calT^d} n\ln\frac{n\ie^*}{\bar n \ie} \d x \leq C_{LS}({\ie}/{\ie^*}) \calP_n,
\]
and the same estimate for the $p$-terms.

We now turn to the entropy term for the energy and first note that by the mass conservation law for $\ie$ we have
\begin{equation}\label{bound.F.n}
\frac{c}{2\sqrt{\ie^*}}\int_{\calT^d}\big(\sqrt{\ie}-\sqrt{\ie^{*}}\big)^2 \dd x = \frac{c}{\sqrt{\ie^*}}\int_{\calT^d}\big(\ie^*-\sqrt{\ie}\sqrt{\ie^{*}}\big) \dd x = c \int_{\calT^d}\big(\sqrt{\ie^*}-\sqrt{\ie}\big) \dd x\,.
\end{equation}
Moreover, the Jensen inequality gives
\[
   \int_{\calT^d} \sqrt{\ie^*} \d x \leq \Big( \int_{\calT^d} \ie^* \d x \Big)^{1/2} =\Big( \int_{\calT^d} \ie \d x \Big)^{1/2}
   = \Norm{\sqrt[4]{\ie}}_{L^4(\calT^d)}^2,
\]
and, subsequently, with the Sobolev imbedding \eqref{ineq:Sobolev} of the Appendix,
\[
   \Norm{\sqrt[4]{\ie}}_{L^4(\calT^d)}^2 \leq C_S \Norm{\grad\sqrt[4]{\ie}}_{L^2(\calT^d)}^2 + \Norm{\sqrt[4]{\ie}}_{L^2(\calT^d)}^2
     = \frac{C_S}{4c} \calP_\ie + \int_{\calT^d}\sqrt{\ie}\d x.
\]
This implies  the bound for the last term of $\calH(n,p,\ie)$ in \eqref{F.ind.0},
\(   \label{est_ie}
   \frac{c}{2\sqrt{\ie^*}}\int_{\calT^d}\big(\sqrt{\ie}-\sqrt{\ie^{*}}\big)^2 \dd x \leq \frac{C_S}{4} \calP_\ie.
\)
For bounding the remaining two terms of $\calH(n,p,\ie)$ we first apply the auxiliary Lemma \eqref{aux.1} of the Appendix to get
\begin{equation}\label{bound.n.bar}
    n^* \LB\big(\frac{\bar n}{n^*}\big)
  +  p^* \LB\big(\frac{\bar p}{p^*}\big)
  \leq C_0(n^*,p^*,\bar n, \bar p) \big(\sqrt{\tfrac{\bar n\bar
      p}{n^*p^*}}-1\big)^2, 
\end{equation}
where $C_0$ satisfies \eqref{C0} and is uniformly bounded. 
Now the idea is to bound this right hand side further, where we
use the dissipation term $\calP_R$ resulting from the reactive terms.
We therefore employ the elementary inequality 
\begin{equation}\label{el.in}
\ln(y)(y-1) \geq 4(\sqrt{y}-1)^2\end{equation}
and Jensen's inequality,
\(
   \calP_R &=& \int_{\calT^d} k\, \ln \frac{np}{\ie}\left(\frac{np}{\ie}-1\right) \ie\,\d x
    \geq 4 k_0 \int_{\calT^d} \left(\sqrt{\tfrac{np}{\ie}}-1\right)^2 \ie\,\d x
    \nonumber\\
    &\geq&  2 k_0 \left( \int_{\calT^d} \sqrt{np}-\sqrt{\ie^*} \d x\right)^2
      - 4 k_0 \int_{\calT^d} \left(\sqrt{e}-\sqrt{\ie^*} \right)^2 \d x
\nonumber\\
&\geq & 2 k_0 \left( \int_{\calT^d} \sqrt{np}-\sqrt{\ie^*} \d x\right)^2
      - \tfrac{2 C_S k_0\sqrt{\ie^*}}{c} {\cal P}_e .
   \label{est3}
\)
Inspired by \cite{DiF-Fel-Mar}, we define
\begin{equation}\label{delta.n}
   \delta_n := \sqrt{n} - \int_{\calT^d} \sqrt{n} \d x,\qquad
   \delta_p := \sqrt{p} - \int_{\calT^d} \sqrt{p} \d x,
\end{equation}
and obtain
\begin{align*}
    \calP_R + \frac{2C_S k_0 \sqrt{\ie^*}}{c} \calP_\ie &\geq
        2 k_0 \left( \int_{\calT^d} \sqrt{n} \d x \int_{\calT^d} \sqrt{p} \d x + \int_{\calT^d} \delta_n\delta_p \d x - \sqrt{\ie^*} \right)^2 \\
        &\geq
        k_0  \left( \int_{\calT^d} \sqrt{n} \d x \int_{\calT^d} \sqrt{p} \d x - \sqrt{\ie^*} \right)^2 -
         \frac{ k_0}{2} \left( \int_{\calT^d} (\delta_n^2+\delta_p^2 )\d x \right)^2\\
&\geq  k_0  \left( \int_{\calT^d} \sqrt{n} \d x \int_{\calT^d} \sqrt{p} \d x - \sqrt{\ie^*} \right)^2 -
          \frac{k_0}{2}(\bar n +\bar p)  \int_{\calT^d} (\delta_n^2 + \delta_p^2) \d x \,,
\end{align*}
where we used
\begin{equation}\label{bound.delta.n.1}
 \int_{\calT^d} \delta_n^2 \d x =\ \int_{\calT^d} n \d x -  \Big(\int_{\calT^d} \sqrt{n} \d x\Big)^2 \leq \bar n\,
\end{equation}
and its analog for $p$. 
We can now further apply the Poincar\'e inequality 
\begin{equation}\label{Poinc.delta}
 \int_{\calT^d} \delta_n^2 \d x\leq C_P  \int_{\calT^d} \big|\nabla\sqrt{n}\big|^2 \d x \leq \frac{C_P}{2}\calP_n,
\end{equation}
implying 
\[
    \calP_R + \frac{2C_S k_0 \sqrt{\ie^*}}{c} \calP_\ie &\geq& k_0  \left( \int_{\calT^d} \sqrt{n} \d x \int_{\calT^d} \sqrt{p} \d x - \sqrt{\ie^*} \right)^2 -
          \frac{k_0C_P}{4}(\bar n +\bar p) \big( \calP_n {+} \calP_p\big).
\]
We are therefore left to ``interchange square roots and integration'' in order to connect with \eqref{bound.n.bar}
and complete the estimation of $\calH(n,p,\ie)$ in terms of the entropy production.
We rewrite
\begin{equation}\label{R.n}
   \int_{\calT^d} \sqrt{n} \d x = \sqrt{\bar n} - R_n \int_{\calT^d} \delta_n^2 \d x\qquad \textnormal{with} \quad R_n := \Big( \sqrt{\bar n} + \int_{\calT^d} \sqrt{n} \d x \Big)^{-1}\,.
\end{equation}
Note that $R_n$
is unbounded if and only if $\sqrt{\bar n} \geq \int_{\calT^d} \sqrt{n} \d x$ vanishes. We circumvent here the procedure of distinguishing between different cases as in \cite{DiF-Fel-Mar} by estimating more directly
\begin{eqnarray*}
&&\left( \int_{\calT^d} \sqrt{n} \d x \int_{\calT^d} \sqrt{p} \d x - \sqrt{\ie^*} \right)^2\\
 &&\quad\geq\frac12 \big(\sqrt{\bar n \bar p}-\sqrt{e^*}\big)^2 -\Big(R_n \int_{\calT^d} \delta_n^2 \d x \sqrt{\bar p}+ R_p \int_{\calT^d} \delta_p^2 \d x \sqrt{\bar n} - R_nR_p\int_{\calT^d} \delta_n^2 \d x\int_{\calT^d} \delta_p^2 \d x \Big)^2\\
&&\quad \geq  \frac12 \big(\sqrt{\bar n \bar p}-\sqrt{e^*}\big)^2 - 2 C_P(\bar n+\bar p) (\calP_n + \calP_p)
\end{eqnarray*}
The derivation of the last inequality is carried out in Lemma
\ref{aux.2} of the Appendix giving
\begin{align} \label{C_1}
 & n^* \LB\big(\frac{\bar n}{n^*}\big)
  +  p^* \LB\big(\frac{\bar p}{p^*}\big) \leq C_1(\bar n,\bar
  p,n^*,p^*,\ie^*) \cal P, \quad \text{where} \\
\nonumber 
& C_1(\bar n,\bar p,n^*,p^*,\ie^*)=
   \frac{2}{\ie^*k_0}C_0(\bar n,\bar p,n^*,p^*) \max\Big\{1, \tfrac{2 C_S k_0\sqrt{\ie^*}}{c\kappa},\tfrac{1}{\kappa}\big(\tfrac{k_0}{4}{+}4\big) C_p (\bar n{+}\bar p)\Big\} \,.
\end{align}
Therefore, we can close the chain of inequalities \eqref{F.ind.0}--\eqref{C_1} and conclude
\begin{align}
 & \calH(n,p,\ie) \leq 
   K\calP(n,p,\ie) \quad \text{with} \nonumber \\
 & K(\kappa,\bar n,\bar p,n^*,p^*,\ie^*,\bar\ie,\underline\ie) = 
      \kappa^{-1}\max\Big\{ \kappa C_1(\bar n,\bar p,n^*,p^*,\ie^*), C_{LS}({\ie}/{\ie^*}) , \frac{C_S}{4}\Big\}\,.
      \label{K}
\end{align}
Thus the proof of Proposition \ref{prop:EEP} is complete.
\end{proof}

Using Proposition \ref{prop:EEP}, we are now ready to prove the main
result about exponential convergence towards the steady state.

\begin{theorem}[Exponential convergence to equilibrium]\label{thm:conv1}
Let $(n,p,\ie)$ be a solution to the system \eqref{eq:rdM} subject to the nonnegative initial data
$(n_0,p_0,\ie_0)$ satisfying $\calH(n_0,p_0,\ie_0)$ $ <\infty$, the normalizations \eqref{normalizations} and assumption \eqref{ass.e0}.
Then, the solution converges exponentially fast to the unique constant
equilibrium state $(n^*,p^*,\ie^*)$,
\[
    \Norm{n{-}n^*}_{L^1(\calT^d)}^2 + \Norm{p{-}p^*}_{L^1(\calT^d)}^2
    +  \|\sqrt{\ie}{-}\sqrt{\ie^*}\|_{L^2(\calT^d)}^2 \leq C(\bar n ,\bar p, n^*,p^*) \calH(n_0,p_0,\ie_0) \exp(-\widehat K t),
\]
where $\widehat K$ is given by \eqref{K.max} and  
\[ 
&C(\bar n ,\bar p, n^*,p^*) =\max\Big\{\frac23\big(2(\bar n {+} \bar
p) + 4(C_n {+} C_p)\|\sqrt{e}\|_{L^1(\calT^d)}\big)\,,\,
\tfrac{2\sqrt{\ie^*}}{c+C_n+C_p}(1 {+} 2C_n^2{+}2C_P^2)\Big\}
\]
is uniformly bounded. 
\end{theorem}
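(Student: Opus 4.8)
The plan is to turn the differential identity \eqref{diss.rel}, $\tot{}{t}\calH=-\calP$, into an exponential decay estimate for the relative entropy $\calH$ by feeding in the entropy entropy-production estimate of Proposition \ref{prop:EEP}, and then to convert this entropy decay into the stated $L^1$ and $L^2$ bounds via Czisz\'ar--Kullback--Pinsker (CKP) type inequalities, following the scheme of \cite{DiF-Fel-Mar,UAMT00}. First I would note that the global classical solution constructed above keeps $\ie$ trapped in $[\underline\ie,\bar\ie]$ by \eqref{e_bbelow}, so that Proposition \ref{prop:EEP} applies along the whole trajectory and yields $\calH\le K\,\calP$ with $K$ depending on the instantaneous masses $\Norm{n}_{L^1(\calT^d)}$, $\Norm{p}_{L^1(\calT^d)}$. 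The decisive input is Lemma \ref{lemma:L1bdd-x-indep}, which bounds these (non-conserved) masses uniformly in time and hence gives $\widehat K=\sup_{t>0}K<\infty$ via \eqref{K.max}. Inserting $\calP\ge\widehat K^{-1}\calH$ into \eqref{diss.rel} produces a closed linear differential inequality for $t\mapsto\calH(n(t),p(t),\ie(t))$, which Gronwall's lemma integrates to the exponential decay of $\calH$ asserted in the theorem, using the hypothesis $\calH(n_0,p_0,\ie_0)<\infty$ for the initial value.

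It then remains to bound the left-hand side of the claimed inequality by $\calH$. Here I would use the explicit representation \eqref{ent.F} of $\calH$ as the sum of two density relative entropies and one energy term. The energy term equals $\tfrac{c+C_n+C_p}{2\sqrt{\ie^*}}\Norm{\sqrt\ie-\sqrt{\ie^*}}_{L^2(\calT^d)}^2$, so it controls $\Norm{\sqrt\ie-\sqrt{\ie^*}}_{L^2(\calT^d)}^2$ by $\tfrac{2\sqrt{\ie^*}}{c+C_n+C_p}\,\calH$ without any further work; this is the origin of the second entry in the constant $C$. For the density parts $\int_{\calT^d} w_n(\ie)\LB(n/w_n(\ie))\,\dd x$ and its $p$-analogue, I would apply a CKP inequality to estimate each from below by a multiple of $\Norm{n-w_n(\ie)}_{L^1(\calT^d)}^2$, with a constant that degrades linearly in the masses $\bar n,\bar p$ and in $\Norm{\sqrt\ie}_{L^1(\calT^d)}$; this is what produces the factor $\tfrac23\bigl(2(\bar n{+}\bar p)+4(C_n{+}C_p)\Norm{\sqrt\ie}_{L^1(\calT^d)}\bigr)$ appearing in the first entry of $C$.

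The step that needs care, and which I expect to be the main technical obstacle, is that the density relative entropies measure the distance of $n$ to the \emph{$x$-dependent} weight $w_n(\ie)=C_n\sqrt\ie$ rather than to the constant equilibrium $n^*=C_n\sqrt{\ie^*}$. I would reconcile the two by a triangle inequality, $\Norm{n-n^*}_{L^1(\calT^d)}\le\Norm{n-C_n\sqrt\ie}_{L^1(\calT^d)}+C_n\Norm{\sqrt\ie-\sqrt{\ie^*}}_{L^1(\calT^d)}$, and then absorb the last term into the energy contribution, using $\Norm{\,\cdot\,}_{L^1(\calT^d)}\le\Norm{\,\cdot\,}_{L^2(\calT^d)}$ on the normalized torus $|\calT^d|=1$. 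This coupling of the density and energy parts of $\calH$ is precisely what generates the mixed constant $1{+}2C_n^2{+}2C_p^2$ in $C$, and collecting the contributions of all three terms (taking the maximum of the two constants, since each term of $\calH$ is individually nonnegative and bounded by $\calH$) completes the estimate. Combined with the exponential decay of $\calH$ from the first step, this yields the stated bound with the rate fixed by $\widehat K$.
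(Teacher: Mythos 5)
Your proposal is correct and follows essentially the same route as the paper: exponential decay of $\calH$ via Proposition \ref{prop:EEP}, the dissipation relation \eqref{diss.rel} and the uniform mass bound of Lemma \ref{lemma:L1bdd-x-indep}, followed by the splitting $\Norm{n-n^*}_{L^1}\le\Norm{n-C_n\sqrt\ie}_{L^1}+C_n\Norm{\sqrt\ie-\sqrt{\ie^*}}_{L^1}$, the Czisz\'ar--Kullback--Pinsker inequality \eqref{ineq:CKP} for the density relative entropies, the embedding $L^1\subset L^2$ on the normalized torus, and the fact that the energy term is itself a component of $\calH$ in \eqref{ent.F}. No substantive differences from the paper's argument.
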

\begin{proof} The entropy entropy-production inequality \eqref{entropy-entropy_dis} from Proposition \ref{prop:EEP}
together with the dissipation relation \eqref{diss.rel} imply the exponential convergence of the relative entropy
with exponent $\widehat K$ in \eqref{K.max},
\begin{equation}\label{dec.F}
\calH(n,p,\ie)\leq \calH(n_0,p_0,\ie_0)\exp(-\widehat K t)\,. 
\end{equation}
It therefore remains to derive the decay estimates for $n,p,\ie$ by bounding the relative entropy from below.
We first note that 
\begin{eqnarray*}
    \Norm{n-n^*}_{L^1(\calT^d)}^2 &\leq& 2 \left( \Norm{n-C_n\sqrt\ie}_{L^1(\calT^d)}^2 + C_n^2\|\sqrt\ie-\sqrt{\ie^*}\|_{L^1(\calT^d)}^2 \right)\\
&\leq & 2\left( \Norm{n-C_n\sqrt\ie}_{L^1(\calT^d)}^2 + C_n^2\|\sqrt\ie-\sqrt{\ie^*}\|_{L^2(\calT^d)}^2 \right)\,,
\end{eqnarray*}
and use the Csisz\'ar-Kullback-Pinsker inequality \eqref{ineq:CKP} of
the Appendix, 
\begin{align*}
    \Norm{n-C_n\sqrt\ie}_{L^1(\calT^d)}^2 &\leq \frac13\left( 2 \Norm{n}_{L^1(\calT^d)} + 4 C_n\Norm{\sqrt\ie}_{L^1(\calT^d)} \right)
        \int_{\calT^d} C_n\sqrt\ie\,\LB\big( \tfrac{n}{C_n\sqrt\ie}\big) \,\d x \\
        &\leq \frac13\left( 2 \bar n + 4 C_n \Norm{\sqrt\ie}_{L^1(\calT^d)} \right)\calH(n,p,\ie).
\end{align*}
The analogous estimate holds for $p$.
The proof is concluded by observing that the term $\|\sqrt{\ie}-\sqrt{\ie^*}\|^2_{L^2(\calT^d)}$ is a component of $\calH(n,p,\ie)$
and using the decay \eqref{dec.F}. 
\end{proof}


\subsection{Global existence and long-time behavior in the $x$-dependent case}\label{sec.ex.x}

\subsubsection{Steady states, relative entropy and evolution equations}

As in Section \ref{sec.x.dep.0} we now consider the case of potentials being involved in the dynamics.
In the spirit of semiconductor modeling, the potentials are given by
\[
    V_n = V_\mathrm{conf} + V_\mathrm{el},\qquad V_p = V_\mathrm{conf} - V_\mathrm{el}
\]
where $V_\mathrm{conf}$ represents confinement of the carriers
and $V_\mathrm{el}$ is the electrostatic potential.
However, due to technical difficulties in the derivation of an entropy
entropy-production inequality,
we only study a simplified model with $V_\mathrm{el} \equiv 0$ here,
i.e., we set $V_n=V_p=V:\mathbb{R}^d\rightarrow \mathbb{R}$ $(d\leq 4)$ on the whole space $\mathbb{R}^d$.
In particular, we assume that 
\begin{equation}
\label{V.ass}
V\in C^2(\mathbb{R}^d)\quad \textnormal{ is an}\quad  L^\infty\textnormal{-perturbation of a uniformly convex function.}
\end{equation}

In this section we shall not keep as close track of the constants
arising as in Section \ref{ss:x-indep}.  Moreover, instead of working
on a torus, we consider the full space setting on $\R^d$.  Note that
the presence of the confining potential can be loosely interpreted as
a model for a bounded domain.  

We start from an entropy relation \eqref{eq:whS} but choose $s$,
$w_n$, and $w_p$ depending on $x$. To simplify notation and
without loss of generality we normalize 
\[
\int_{\mathbb{R}^d} \ie \, \dd x =\int_{\mathbb{R}^d} \ie_0 \, \dd x
=1.
\]
As in \cite[Eqn.\,(5.5)]{MiHaMa15UDER} we choose $w_n$ and
$w_p$ linearly dependent in the form
\[
w_n(x,\ie)=C_n \sqrt{\ie}\exp(-V), \quad w_p(x,\ie)=C_p
\sqrt{\ie}\exp(-V)
\]
for some constants $C_n,C_p>0$. Moreover, for $c>0$ we let
\[
\wh s(x,\ie) =c \sqrt{\ie}\exp(-V) \quad \text{giving }
s(x,\ie)= (c{+}C_n{+}C_p)\sqrt e \exp(-V(x)). 
\]
Throughout, we assume the potential to be normalized such that
\[
\int_{\mathbb{R}^d}\exp(-2V) \dd x =1\,.
\] 
The steady state is again determined by maximizing the entropy $\wh S$
under the conservation laws \eqref{cons.laws.0}.
Setting the Lagrangian multiplier for the conservation law of $n-p$ to $0$, i.e. $\Sigma_0=0$, we get
\begin{eqnarray*}
n^*(x)&=&w_n(x,\ie^*(x))=C_n\exp(-V(x))\sqrt{\ie^*(x)}, \\
p^*(x)&=&w_p(x,\ie^*(x))=C_p\exp(-V(x))\sqrt{\ie^*(x)}. 
\end{eqnarray*}
For the Lagrangian multiplier $\Sigma_{\ie}$ corresponding to the conservation of
energy we obtain 
\begin{eqnarray*}
\Sigma_{\ie}=\frac{C_n+C_p+c}{2}\ \frac{\exp(-V(x))}{\sqrt{\ie^*}}.
\end{eqnarray*}
Due to the normalizations of $\ie $ and $V$ we have then
\[
e^*(x)=\exp(-2V(x)) \quad \text{for } x\in \R^d\,.
\]
Thus, we arrive at the following expression for the relative entropy $\calH$:
\begin{eqnarray}
 \calH (n,p,\ie)&=&\int_{\mathbb{R}^d}
 w_n(x,\ie)\LB\big(\tfrac{n}{w_n(x,\ie)}\big)  \dd x +
\int_{\mathbb{R}^d}w_p(x,\ie)\LB\big(\tfrac{p}{w_p(x,\ie)}\big)\dd x \nonumber\\
\label{ent.F.x}&&\quad +\tfrac{c+C_n+C_p}{2}
\int_{\mathbb{R}^d}\Big(\sqrt{\ie}-\sqrt{\ie^*(x)}\Big)^2 \dd x. 
\end{eqnarray}
It is important to realize that no $x$-dependent factor
$\mu=\exp({-}V(x))$ shows 
up in the last integral, because $\mu(x)=\sqrt{e^*(x)}$ gives
$s(x,e)=(c{+}C_n{+}C_p)\sqrt{ e \,e^*(x)}$; hence
\begin{align*}
\tfrac{s(x,e^*(x))-\pl_e s(x,e_*(x))(e{-}e^*(x))- s(x,e)}{c+C_n+ C_p}
=e^*(x) -\tfrac12(e{-}e^*(x))- \sqrt{e\,e^*(x)} =\tfrac12\big(\sqrt e -
\sqrt{e^*(x)}\big)^2. 
\end{align*}

The corresponding gradient system takes the form
\begin{subequations}\label{eq:scM}
\(
    \partial_t n &=& \grad\cdot \left( \grad n + 2n\grad V \right) + k(\ie-\rho(x)np),  \label{scM1} \\
    \partial_t p &=& \grad\cdot \left( \grad p + 2p\grad V \right) + k(\ie-\rho(x)np),  \label{scM2} \\
    \partial_t \ie &=& \grad\cdot \left( \grad\ie + 2\ie\grad V \right).          \label{scM3}
\)
\end{subequations}
Let us summarize for completeness,
\[
    &\rho(x) = \exp(2V), \qquad w_n = C_n \exp(-V)\sqrt\ie, \qquad w_p = C_p \exp(-V)\sqrt\ie,&\\
    &\ie^* = \exp(-2V),\qquad n^* = C_n \exp(-2V),\qquad p^* = C_p \exp(-2V).&
\]
The constants $C_n$, $C_p$ are determined by the relations
\[
   C_n C_p = 1,\qquad C_n-C_p=(C_n-C_p)\int_{\R^d} \exp(-2V) \d x = \int_{\R^d} \left(n^*-p^*\right) \d x = \bfC_0.
\]
The  entropy production reads
\begin{align}
\calP(n,p,\ie)&=\int_{\mathbb{R}^d} \Big(n\Big|\nabla \ln \tfrac{n}{w_n}\Big|^2 +
p\Big|\nabla \ln \tfrac{p}{w_p}\Big|^2\Big)\dd x + \int_{\mathbb{R}^d} \tfrac{N}{4}\Big|\nabla \ln \tfrac{\ie}{\ie^*}\Big|^2 \d x \nonumber\\
&\quad+  \int_{\mathbb{R}^d} k \ie \Big(\tfrac{np}{w_nw_p}-1\Big)\ln\tfrac{np}{w_nw_p} \d x\nonumber\\
&=\frac12\int_{\mathbb{R}^d} \Big(n\Big|\nabla \ln
\tfrac{n}{\ie^*}\Big|^2 {+} n\Big|\nabla \ln \tfrac{n}{\ie}\Big|^2 \Big)\d x
+\frac12\int_{\mathbb{R}^d} \Big(p\Big|\nabla \ln \tfrac{p}{\ie^*}\Big|^2 {+}p\Big|\nabla \ln \tfrac{p}{\ie}\Big|^2 \Big)\d x\nonumber\\
\label{ent.diss.1}& \quad 
+(C_n{+}C_p{+}c)\int_{\mathbb{R}^d} \sqrt{\tfrac{\ie}{\ie^*}}\Big|\nabla \ln \sqrt{\tfrac{\ie}{\ie^*}}\Big|^2 \d x
+ \int_{\mathbb{R}^d} k \ie \big(\tfrac{np}{w_nw_p}{-}1\big)\ln\tfrac{np}{w_nw_p} \d x
\end{align}
where $N=n+p+c\sqrt{\ie}\sqrt{\ie^*}$.

\subsubsection{Global existence of solutions}

We consider the system \eqref{eq:scM} posed in the full space $\R^d$,
subject to the initial data 
\[
    n(0,x) = n_0(x)\geq 0,\qquad  p(0,x) = p_0(x)\geq 0,\qquad
    \ie(0,x)=\ie_0(x)\geq 0. 
\]

We first derive an $L^\infty$-bound for the solution $\ie$ of the Fokker-Planck equation \eqref{scM3}, which is decoupled from the evolution of $n,p$.

\begin{lemma} \label{lemma:L^infty-ie}
Assume that the initial datum $\ie_0\in L^1(\R^d) \cap L^\infty(\R^d)$.
Then the solution $\ie$ to \eqref{scM3} satisfies
\(   \label{L^infty-ie}
   \sup_{t\geq 0} \Norm{\ie(t)}_{L^\infty(\R^d)} < \infty.
\)
\end{lemma}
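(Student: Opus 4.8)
The plan is to reduce the Fokker--Planck equation \eqref{scM3} to a Markovian diffusion and then to treat short and long times separately, using the maximum principle for small $t$ and the smoothing of the confining semigroup for large $t$. First I would pass to the relative density $w:=\ie/\ie^*$ with $\ie^*=\exp(-2V)$. Since $\nabla\ie+2\ie\nabla V=\ie^*\nabla w$, equation \eqref{scM3} becomes
$$\partial_t w=\frac{1}{\ie^*}\,\nabla\cdot\big(\ie^*\nabla w\big)=\Delta w-2\nabla V\cdot\nabla w=:Lw.$$
The operator $L$ generates a Markov semigroup $P_t$ which is symmetric in $L^2(\dd\mu)$ for the invariant probability measure $\dd\mu=\ie^*\dd x=\exp(-2V)\dd x$ (normalised in Section~\ref{sec.ex.x}), preserves positivity, and contracts both $L^1(\dd\mu)$ and $L^\infty$. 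In particular $w_0=\ie_0/\ie^*\ge0$ with $\Norm{w_0}_{L^1(\dd\mu)}=\Norm{\ie_0}_{L^1(\R^d)}<\infty$, while $\ie^*$ is bounded from above because $V$ is bounded from below by \eqref{V.ass}.

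For $t\in[0,1]$ I would argue directly on $\ie$. Writing \eqref{scM3} as $\partial_t\ie=\Delta\ie+2\nabla V\cdot\nabla\ie+2(\Delta V)\ie$ and testing with $\ie^{p-1}$ (equivalently, evaluating at a spatial maximum) gives, after dropping the nonnegative gradient term, $\frac{\dd}{\dd t}\Norm{\ie}_{L^p}^p\le 2(p-1)\int_{\R^d}(\Delta V)\ie^p\dd x$; using the upper bound $K:=\sup_x(\Delta V)_+<\infty$ furnished by \eqref{V.ass} and letting $p\to\infty$ yields
$$\sup_{0\le t\le1}\Norm{\ie(t)}_{L^\infty(\R^d)}\le\Norm{\ie_0}_{L^\infty(\R^d)}\,e^{2K}.$$
It is essential here that $\ie_0\in L^\infty$, since $w_0=\ie_0\exp(2V)$ need not be bounded.

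For $t\ge1$ I would use the smoothing of $P_t$. By the Bakry--Émery criterion for the uniformly convex part and the Holley--Stroock lemma for the bounded perturbation, \eqref{V.ass} yields a logarithmic Sobolev inequality for $\mu$; together with the Sobolev inequality on $\R^d$ this gives ultracontractivity, so that $t\mapsto\Norm{P_t}_{L^1(\dd\mu)\to L^\infty}$ is finite for each $t>0$ and non-increasing (from $\Norm{P_{t+s}}_{L^1\to L^\infty}\le\Norm{P_t}_{L^1\to L^\infty}\Norm{P_s}_{L^1\to L^1}$). Hence $\Norm{w(1)}_{L^\infty}\le\Norm{P_1}_{L^1\to L^\infty}\Norm{\ie_0}_{L^1}=:\Lambda$, i.e.\ $\ie(1)\le\Lambda\ie^*$ pointwise. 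Since $\Lambda\ie^*$ is a stationary solution of \eqref{scM3}, the comparison principle propagates this to $\ie(t)\le\Lambda\ie^*$ for all $t\ge1$, and combining with the short-time bound we conclude
$$\sup_{t\ge0}\Norm{\ie(t)}_{L^\infty(\R^d)}\le\max\big\{\Norm{\ie_0}_{L^\infty}e^{2K},\ \Lambda\Norm{\ie^*}_{L^\infty}\big\}<\infty.$$

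I expect the long-time step to be the main obstacle: one has to upgrade the mere $L^1(\dd\mu)$ control of $w_0$ into a pointwise bound at a positive time, which is exactly an ultracontractive (Gaussian kernel) estimate for the confining Fokker--Planck semigroup. The two regimes cannot be merged into a single argument, because the smoothing constant $\Norm{P_t}_{L^1\to L^\infty}$ blows up as $t\to0$; they are glued only through the $L^\infty$-datum $\ie_0$, which is used exclusively for small $t$. A secondary technical point is the behaviour at spatial infinity: the comparison with $\Lambda\ie^*$ succeeds precisely because the solution inherits the Gaussian tail of the kernel, a decay that the crude, space-constant maximum-principle bound does not see.
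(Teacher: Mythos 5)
Your short-time step is sound: it is essentially the $p\to\infty$ limit of the same $L^p$ energy identity the paper uses, and it needs only that $\Delta V$ be bounded above (an assumption the paper also invokes implicitly through $\Norm{\Delta V}_{L^\infty(\R^d)}$, even though \eqref{V.ass} does not literally provide it). The genuine gap is in the long-time step. The semigroup $P_t$ generated by $L=\Delta-2\nabla V\cdot\nabla$ for a uniformly convex (plus bounded) potential is \emph{not} ultracontractive: Bakry--\'Emery and Holley--Stroock give a logarithmic Sobolev inequality, hence hypercontractivity $L^2(\dd\mu)\to L^q(\dd\mu)$ for finite $q$, but not $L^1(\dd\mu)\to L^\infty$ smoothing; and the Euclidean Sobolev inequality does not transfer to the Dirichlet form $\int|\nabla f|^2e^{-2V}\dd x$ because the weight degenerates at infinity. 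The model case $2V(x)=|x|^2$ is the classical counterexample: the Mehler kernel of the Ornstein--Uhlenbeck semigroup relative to its invariant Gaussian measure is unbounded, so $\Norm{P_t}_{L^1(\dd\mu)\to L^\infty}=\infty$ for every $t>0$ and your $\Lambda$ does not exist. Worse, the intermediate conclusion $\ie(1)\le\Lambda\,\ie^*$ is false for general $\ie_0\in L^1\cap L^\infty$: take bumps of mass $2^{-k}$ centred at distance $k$ from the origin; this datum is admissible, but the fundamental solution emanating from a source at $y$ exceeds $\ie^*$ at time $1$ by a factor of order $\exp(|y|^2)$ (at the point $x=e^2y$ in the Gaussian case), so $\ie(1)/\ie^*$ is unbounded and no comparison with a multiple of $\ie^*$ can even be initiated. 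The issue is exactly what you dismiss at the end as ``a secondary technical point'': mass placed far out in the tail of $\ie^*$ does \emph{not} acquire the equilibrium decay in finite time.

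The paper avoids this entirely by never leaving the unweighted Lebesgue scale: it differentiates $\int\ie^{j+1}\dd x$, applies the classical Nash inequality to $f=\ie^{(j+1)/2}$ with respect to $\dd x$ (where the conserved quantity $\Norm{\ie}_{L^1(\R^d)}$ is available), absorbs the gradient term by Young's inequality with $\varepsilon_j=A/j$, and closes a Moser iteration in $j$; the potential enters only through $\Norm{\Delta V}_{L^\infty}$. If you wish to keep a semigroup formulation, the correct substitute for ultracontractivity of $P_t$ on $L^p(\dd\mu)$ would be an Aronson-type Gaussian upper bound for the \emph{unweighted} kernel of the divergence-form operator in \eqref{scM3}, which yields $\Norm{\ie(t)}_{L^\infty}\le C(t)\Norm{\ie_0}_{L^1}$ --- i.e.\ precisely what the Nash iteration produces --- but not a pointwise domination by $\ie^*$. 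Note also that a domination $\ie\le\bar\ie\,\ie^*$ is only obtained in the paper in the subsequent comparison lemma, and there it is \emph{assumed} on the initial datum via \eqref{ass.e0.x} rather than derived from $\ie_0\in L^1\cap L^\infty$.
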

A proof of this Lemma can be found e.g.\ \cite{DiF-Fel-Mar}.
For the sake of completeness we summarize the main arguments here. 
\\
\begin{proof} 
Let us calculate the evolution of the $L^{j+1}$ norm of $\ie$ via integration by parts,
\[
   \tot{}{t} \int_{\R^d} \ie^{j+1} \d x &=& -\frac{4j}{j+1} \int_{\R^d} \left| \grad\ie^\frac{j+1}{2} \right|^2 \d x +
      2j \int_{\R^d} \ie^{j+1} \laplace V \d x \\
    &\leq&   -\frac{4j}{j+1} \int_{\R^d} \left| \grad\ie^\frac{j+1}{2} \right|^2 \d x +
      2j \Norm{\laplace V}_{L^\infty(\R^d)} \int_{\R^d} \ie^{j+1}  \d x.
\]
We now use the classical Nash inequality, see e.g.\ \cite{Nash58, CL93}, valid for all $f\in L^1(\R^d)\cap H^1(\R^d)$,
\[
    \Norm{f}_{L^2(\R^d)}^\frac{d+2}{d} \leq C_d \Norm{f}_{L^1(\R^d)}^{2/d} \Norm{\grad f}_{L^2(\R^d)},
\]
for $f=\ie^\frac{j+1}{2}$, and the Young inequality with the conjugate exponents
$\frac{d+2}{d}$ and $\frac{d+2}{2}$, to obtain
\[
    \int_{\R^d} \ie^{j+1}  \d x \leq 
      \eps \int_{\R^d} \left| \grad\ie^\frac{j+1}{2} \right|^2 \d x + \frac{C}{\eps} \left(\int_{\R^d} e^\frac{j+1}{2} \d x \right)^2.
\]
Consequently, for $\eps$ of the form $\eps = \eps_j = \frac{A}{j}$ with a suitable constant $A>0$, we have
\[
   \tot{}{t} \int_{\R^d} \ie^{j+1} \d x \leq - \eps_j \int_{\R^d} \ie^{j+1}  \d x
     + Cj(j+\eps_j) \sup_{0\leq\tau\leq t} \left( \int_{\R^d} \ie^\frac{j+1}{2} \d x\right)^2.
\]
By an iterative argument (Lemma 4.2 in \cite{DiF-Fel-Mar}), this implies the announced bound \eqref{L^infty-ie}.
\end{proof}

Next, we derive a uniform $L^1$ bound for $n$ and $p$.

\begin{lemma} \label{lemma:L1bdd-x-dep} Let $(n,p,\ie)$ be a solution
  to the system \eqref{eq:scM} subject to the nonnegative initial data
  $(n_0,p_0,\ie_0)$ with finite entropy $\calH(n_0,p_0,\ie_0) <
  \infty$, satisfying the normalizations \eqref{cons.laws}.  Then
  \( \label{bound_L1_np_2} \sup_{t\geq 0} \left(\Norm{n}_{L^1(\R^d)} +
    \Norm{p}_{L^1(\R^d)} \right) < \infty.  \)
\end{lemma}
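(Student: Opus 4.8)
\emph{Proof idea.} The argument follows the scheme of Lemma~\ref{lemma:L1bdd-x-indep}, using that the reaction terms in \eqref{scM1} and \eqref{scM2} coincide. The plan is to first set $\xi:=n-p$, which by subtracting the two equations solves the decoupled Fokker--Planck equation $\partial_t\xi=\grad\cdot(\grad\xi+2\xi\grad V)$, exactly of the form \eqref{scM3}. Writing $n=p+\xi$ in \eqref{scM2}, integrating over $\R^d$ (the divergence term producing no boundary contribution by decay at infinity), and abbreviating $m(t):=\Norm{p(t)}_{L^1(\R^d)}$ and $\rho=\exp(2V)$, one obtains
\begin{align*}
\tot{}{t}m(t)=\int_{\R^d}k\,\ie\,\dd x-\int_{\R^d}k\,\rho\,np\,\dd x .
\end{align*}
Since $\int_{\R^d}\ie\,\dd x=1$ and $k\le\bar k$, the first term is bounded by $\bar k$, so the whole question reduces to a coercive lower bound for the sink $\int_{\R^d}\rho\,np\,\dd x$.

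For that lower bound I would use the pointwise identity $np=\tfrac12\big(n^2+p^2-\xi^2\big)$, valid because $\xi=n-p$, together with a weighted Cauchy--Schwarz estimate. The normalization $\int_{\R^d}\exp(-2V)\,\dd x=1$ gives $\int_{\R^d}\rho\,p^2\,\dd x\ge m^2$ and analogously $\int_{\R^d}\rho\,n^2\,\dd x\ge \Norm{n}_{L^1(\R^d)}^2$, whence
\begin{align*}
\int_{\R^d}\rho\,np\,\dd x\ \ge\ \tfrac12\,m^2-\tfrac12\int_{\R^d}\rho\,\xi^2\,\dd x .
\end{align*}
Feeding this back, and using $\int k\,\rho\,np\,\dd x\ge\underbar k\int\rho\,np\,\dd x$, yields the Riccati-type inequality $\tot{}{t}m\le \bar k+\tfrac{\underbar k}{2}\int_{\R^d}\rho\,\xi^2\,\dd x-\tfrac{\underbar k}{2}\,m^2$. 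Provided $\int_{\R^d}\rho\,\xi^2\,\dd x$ is bounded uniformly in $t$ by some $B_0^2$, this forces $m(t)\le\max\{m(0),(2\bar k/\underbar k+B_0^2)^{1/2}\}$, and then $\Norm{n}_{L^1(\R^d)}=m+\bfC_0$ is bounded by the conservation law, giving \eqref{bound_L1_np_2}.

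The heart of the matter, and the step I expect to be the main obstacle, is the uniform bound on $\int_{\R^d}\rho\,\xi^2\,\dd x=\int_{\R^d}\exp(2V)\,\xi^2\,\dd x$. Here I would exploit the Fokker--Planck structure of the $\xi$-equation, whose mass-$\bfC_0$ equilibrium is $\xi_\infty=\bfC_0\exp(-2V)$, recalling that $\int_{\R^d}\xi\,\dd x=\bfC_0$ is conserved. Setting $h:=\xi\exp(2V)$, one checks $\grad\xi+2\xi\grad V=\exp(-2V)\grad h$, so that the $\chi^2$-type functional $D(t):=\int_{\R^d}(h-\bfC_0)^2\exp(-2V)\,\dd x$ obeys $\tot{}{t}D=-2\int_{\R^d}|\grad h|^2\exp(-2V)\,\dd x\le 0$; under assumption \eqref{V.ass} the associated weighted Poincar\'e inequality even makes $D$ decay exponentially. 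Since a short computation gives $\int_{\R^d}\rho\,\xi^2\,\dd x=D(t)+\bfC_0^2$, the monotonicity of $D$ provides the desired uniform bound $B_0^2$. The one delicate point is that finite initial entropy $\calH(n_0,p_0,\ie_0)<\infty$ need not imply $D(0)<\infty$; this I would circumvent by parabolic smoothing (so that $D(t_0)<\infty$ for any $t_0>0$) combined with the crude estimate $\tot{}{t}m\le\bar k$ on $[0,t_0]$, obtained by discarding the nonnegative sink, which keeps $m$ bounded on the initial layer while the Riccati inequality governs $t\ge t_0$. The $L^\infty$-bound on $\ie$ from Lemma~\ref{lemma:L^infty-ie} is used throughout to guarantee that the reaction integrals are finite and the formal manipulations justified.
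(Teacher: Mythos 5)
Your overall strategy transplants the proof of Lemma \ref{lemma:L1bdd-x-indep} to the whole-space setting, and the Riccati part of the argument is fine as far as it goes: $\xi=n-p$ does solve the decoupled Fokker--Planck equation, the identity $np=\tfrac12(n^2+p^2-\xi^2)$ and the weighted Cauchy--Schwarz step $\big(\int_{\R^d}p\,\dd x\big)^2\le\int_{\R^d}\exp(2V)\,p^2\,\dd x$ are correct, and the resulting differential inequality would close \emph{if} one had a uniform-in-time bound on $\int_{\R^d}\exp(2V)\,\xi^2\,\dd x$. But that bound is exactly where the proposal has a genuine gap. The hypotheses of the lemma are only $\calH(n_0,p_0,\ie_0)<\infty$ and the normalizations; via Csisz\'ar--Kullback--Pinsker these give $L^1$-type control of $\xi_0$, not the weighted $L^2$ control $\int_{\R^d}\exp(2V)\,\xi_0^2\,\dd x<\infty$ (which is a $\chi^2$-divergence with respect to the invariant measure $\exp(-2V)\,\dd x$ and requires $\xi_0$ to decay at least like $\exp(-V)$ at infinity). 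Your proposed repair by ``parabolic smoothing'' is not valid in the whole space: for Fokker--Planck semigroups the $\chi^2$-divergence of the evolved datum relative to $\exp(-2V)\,\dd x$ does \emph{not} become finite instantaneously. Already for the Ornstein--Uhlenbeck case $V(x)=|x|^2/2$ one can build $\xi_0\in L^1\cap L^\infty$ out of bumps of mass $\exp(-|y_k|)$ placed at points $y_k\to\infty$ for which $\int\exp(2V)\,\xi(t,\cdot)^2\,\dd x=\infty$ on a whole initial time interval; heavy tails relative to $\exp(-2V)$ persist. The same obstruction kills the variant $\int\exp(2V)\,p\,|\xi|\,\dd x\le\|\xi\exp(2V)\|_{L^\infty}\,\Norm{p}_{L^1}$, since the comparison principle only propagates a bound $|\xi_0|\le C\exp(-2V)$ that is not assumed. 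So the argument needs an extra decay hypothesis on $n_0-p_0$ that the lemma does not grant.

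The paper's own proof avoids all of this and is much shorter: it never touches the equations again, but uses only the monotonicity $\calH(n,p,\ie)\le\calH(n_0,p_0,\ie_0)$ together with the Csisz\'ar--Kullback--Pinsker inequality \eqref{ineq:CKP} applied to the pair $(n,w_n)$, plus $\Norm{w_n}_{L^1(\R^d)}\le C_n\Norm{\exp(-2V)}_{L^2(\R^d)}\Norm{\sqrt{\ie}}_{L^2(\R^d)}=C_n$ by mass conservation of $\ie$. This yields $\Norm{n}_{L^1(\R^d)}^2\le C_1\Norm{n}_{L^1(\R^d)}+C_2$, hence the uniform bound. You may want to note that this entropy route is also why the finiteness of the \emph{initial entropy} (rather than any weighted bound on $n_0-p_0$) is the natural hypothesis of the lemma.
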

\begin{proof}
We first bound
\[
    \Norm{n}_{L^1(\R^d)}^2 \leq 2 \left( \Norm{n-w_n}_{L^1(\R^d)}^2 + \Norm{w_n}_{L^1(\R^d)}^2 \right),
\]
and deduce from the Csisz\'ar-Kullback-Pinsker inequality \eqref{ineq:CKP},
\[
    \Norm{n-w_n}_{L^1(\R^d)}^2 &\leq& \frac13 \left( 2\Norm{n}_{L^1(\R^d)} + 4\Norm{w_n}_{L^1(\R^d)} \right)
       \int_{\R^d}\left( n\ln\frac{n}{w_n} - (n-w_n) \right)\, \d x \\
       &\leq& \frac13 \left( 2\Norm{n}_{L^1(\R^d)} + 4\Norm{w_n}_{L^1(\R^d)} \right) \calH(n,p,\ie).
\]
Due to the entropy production $\calH(n,p,\ie) \leq \calH(n_0,p_0,\ie_0)$ and the mass conservation property for $\ie$, we have
\[
   \Norm{w_n}_{L^1(\R^d)} \leq C_n \Norm{\exp(-2V)}_{L^2(\R^d)} \Norm{\sqrt\ie}_{L^2(\R^d)} = C_n.
\]
Therefore,
\[
   \Norm{n}_{L^1(\R^d)}^2 \leq C_1 \Norm{n}_{L^1(\R^d)} + C_2
\]
for some constants $C_1$, $C_2>0$, which immediately implies the claim for $n$.
Repeating the same steps for $p$, we conclude.
\end{proof}

Finally, we derive uniform $L^\infty$-bounds for $n$ and $p$.

\begin{lemma}
Assume that the initial data $(n_0,p_0,\ie_0)$ are in $(L^1(\R^d) \cap L^\infty(\R^d))^3$
with finite entropy $\calH(n_0,p_0,\ie_0) < \infty$.
Then the solution $(n,p,\ie)$ to the system \eqref{eq:scM} satisfies
\[
   \sup_{t\geq 0} \left( \Norm{n(t)}_{L^\infty(\R^d)} +
     \Norm{p(t)}_{L^\infty(\R^d)} 
       \right) < \infty.
\]
\end{lemma}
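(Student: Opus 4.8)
The plan is to run an $L^p$-iteration (Moser--Alikakos scheme) for the sum $v:=n+p$, exactly as in the proof of Lemma \ref{lemma:L^infty-ie}, but to exploit the recombination as a dissipative sink rather than to discard it. Three ingredients are already available: Lemma \ref{lemma:L^infty-ie} gives $\sup_{t\ge0}\Norm{e(t)}_{L^\infty(\R^d)}=:\bar e<\infty$; Lemma \ref{lemma:L1bdd-x-dep} gives $M:=\sup_{t\ge0}\big(\Norm{n}_{L^1(\R^d)}+\Norm{p}_{L^1(\R^d)}\big)<\infty$; and, since the reaction terms in \eqref{scM1}--\eqref{scM2} coincide, the difference $\xi:=n-p$ solves the \emph{homogeneous} Fokker--Planck equation \eqref{scM3}. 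Applying the iteration of Lemma \ref{lemma:L^infty-ie} to $\xi$ (which works for sign-changing data, estimating $\int\xi^{2m}\dd x$, with $\xi_0=n_0-p_0\in L^1\cap L^\infty$) yields $\sup_{t\ge0}\Norm{\xi}_{L^\infty(\R^d)}=:C_0<\infty$. The purpose of this auxiliary bound is the pointwise inequality $np=\tfrac14(v^2-\xi^2)\ge\tfrac14(v^2-C_0^2)$, which turns the recombination into a genuine quadratic sink for $v$ once $v$ is large.

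First I would derive the differential inequality for $\int v^{j+1}\dd x$. Testing $\partial_t v=\grad\cdot(\grad v+2v\grad V)+2k(e-\rho np)$ with $(j+1)v^j$ and integrating by parts as for $e$ produces the dissipative gradient term $-\tfrac{4j}{j+1}\Norm{\grad v^{(j+1)/2}}_{L^2}^2$ together with the drift term $2j\int v^{j+1}\laplace V\,\dd x\le 2j\Norm{\laplace V}_{L^\infty}\int v^{j+1}\dd x$. For the reaction I would use the standing hypothesis $0<\underline k\le k\le\bar k$, together with $\rho\ge\underline\rho:=\exp(2\inf_{\R^d}V)>0$ (finite since $V$ is bounded below under \eqref{V.ass}) and the lower bound on $np$, giving
\[
2(j+1)\int v^j\,k(e-\rho np)\,\dd x \le 2\bar k\bar e\,(j+1)\int v^j\dd x -\tfrac{\underline k\,\underline\rho}{2}(j+1)\int v^{j+2}\dd x + \tfrac{\underline k\,\underline\rho\,C_0^2}{2}(j+1)\int v^j\dd x.
\]
The crucial gain over the pure Fokker--Planck case is the negative $L^{j+2}$-term, which dominates every positive contribution for large $v$.

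The remainder is bookkeeping. I would bound the positive powers $\int v^{j}\dd x$ and $\int v^{j+1}\dd x$ by interpolating between the uniform bound $M$ on the $L^1$-norm and the $L^{j+2}$-norm (log-convexity of $L^p$-norms) and then absorb a small multiple $\varepsilon\int v^{j+2}\dd x$ via Young's inequality, at the cost of an additive constant $C_j=C_j\big(M,\bar e,\underline k,\underline\rho,C_0,\Norm{\laplace V}_{L^\infty},j\big)$; the same interpolation disposes of the drift term. This leaves $\tot{}{t}\int v^{j+1}\dd x\le -c_j\int v^{j+2}\dd x+C_j$ with $c_j\sim(j+1)$, and the reverse Hölder inequality $\int v^{j+2}\dd x\ge M^{-1/(j+1)}\big(\int v^{j+1}\dd x\big)^{(j+2)/(j+1)}$ converts it into the scalar inequality $\dot y\le -\tilde c_j\,y^{1+1/(j+1)}+C_j$ for $y(t)=\int v^{j+1}\dd x$. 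Its trajectories are bounded uniformly in time by $Y_j:=\max\{y(0),(C_j/\tilde c_j)^{(j+1)/(j+2)}\}$, so every $L^{j+1}$-norm of $v$ is bounded uniformly in $t$. Tracking the (at most geometric) growth of $C_j,\tilde c_j$ in $j$ then gives $\sup_j Y_j^{1/(j+1)}<\infty$, whence $\sup_{t\ge0}\Norm{v}_{L^\infty(\R^d)}<\infty$ and, by $0\le n,p\le v$, the claim.

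The main obstacle is conceptual before it is technical. On the whole space a uniform-in-time $L^\infty$-bound \emph{cannot} be obtained from the drift-diffusion operator and a merely bounded source: a constant positive source $F$ forces $\tot{}{t}\Norm{v}_{L^\infty}\le 2\Norm{\laplace V}_{L^\infty}\Norm{v}_{L^\infty}+F$, hence exponential growth. One is therefore forced to use the quadratic recombination sink, and this becomes effective only after the auxiliary $L^\infty$-control of $\xi=n-p$ guarantees that $np\ge\tfrac14(v^2-C_0^2)$ makes $-k\rho np$ of order $v^2$ for large $v$, and only if $k$ is bounded below by a positive constant. The technical heart is then to let the sink dominate within the iteration while keeping the $j$-dependence of the constants under control using solely the $L^1$-bound for the lower-order norms, since on $\R^d$ there is no a priori global $L^\infty$-bound available to start the scheme.
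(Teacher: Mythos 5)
Your argument is workable in outline but takes a genuinely different, and considerably heavier, route than the paper. The paper's proof simply discards the nonpositive part of the reaction, bounds the source by $k(\ie-\rho np)\le \bar k\,\bar\ie$, interpolates $\int (n^j{+}p^j)\dd x$ against the uniformly bounded $L^1$-norms, and then runs \emph{exactly} the Nash--Moser iteration of Lemma \ref{lemma:L^infty-ie}: the damping of $\int(n^{j+1}{+}p^{j+1})\dd x$ comes from the diffusion term via the Nash inequality combined with the uniform $L^1$-bound of Lemma \ref{lemma:L1bdd-x-dep}, and the extra linear growth of size $O(j)$ caused by the bounded source is absorbed into the Nash damping (which has coefficient $O(j/A)$) by choosing $A$ small. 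You instead keep the recombination term and turn it into a quadratic sink for $v=n+p$ via the auxiliary $L^\infty$-bound on $\xi=n-p$ and the identity $np=\tfrac14(v^2-\xi^2)$. This is a legitimate mechanism, and the extra hypotheses it needs ($k\ge\underline k>0$ and $\rho=\exp(2V)$ bounded below, i.e.\ $\inf V>-\infty$) are indeed available under the standing assumptions of Section \ref{sec.ex.x} and \eqref{V.ass}; but it buys nothing here, since the diffusion already supplies the damping.

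Two points deserve correction. First, your ``conceptual obstacle'' is false: a uniform-in-time $L^\infty$-bound \emph{can} be obtained from the drift-diffusion operator and a bounded source, \emph{provided} the $L^1$-norm is uniformly bounded --- which is precisely what Lemma \ref{lemma:L1bdd-x-dep} delivers. Your counterexample (a spatially constant source $F>0$ on all of $\R^d$) has unbounded $L^1$-growth and is therefore not relevant; the ODE $\tot{}{t}\Norm{v}_{L^\infty}\le C\Norm{v}_{L^\infty}+F$ is simply not the right reduction, because the Nash inequality converts the gradient dissipation plus the $L^1$-bound into a genuine damping of every $\int v^{j+1}\dd x$. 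Second, the one step of your own scheme that is genuinely delicate is the claim that the constants $C_j,\tilde c_j$ grow at most geometrically so that $\sup_j Y_j^{1/(j+1)}<\infty$. If you interpolate the lower-order terms all the way down to $L^1$ at every level $j$ and absorb them by Young's inequality into $\eps\int v^{j+2}\dd x$, the resulting additive constants typically grow like $(Cj)^{j+2}$, and then $Y_j^{1/(j+1)}\sim Cj$ diverges. The standard way out (and the reason both Lemma \ref{lemma:L^infty-ie} and Lemma~4.2 of \cite{DiF-Fel-Mar} are structured recursively) is to interpolate against the previous level $L^{(j+1)/2}$ rather than $L^1$, so that the iteration closes after taking $(j+1)$-th roots. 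As written, this bookkeeping step is a gap in your proposal, though a repairable one.
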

\begin{proof}
We use the same Nash-Moser iteration as in the proof of Lemma \ref{lemma:L^infty-ie},
noticing that the only structural difference between the $\ie$-equation \eqref{scM3}
and the $n$, $p$-equations \eqref{scM1}, \eqref{scM2} is the reaction term.
Thus, we only have to use the additional estimate
\[
   \int_{\R^d} (n^j+p^j) k(\ie - \rho np) \d x &\leq& \Norm{k}_{L^\infty(\R^d)} \Norm{\ie}_{L^\infty(\R^d)} \int_{\R^d} (n^j+p^j) \d x \\
     &\leq&  C j \int_{\R^d} (n^{j+1}+p^{j+1}) \d x + \frac{C}{j},
\]
where we used the interpolation of Lebesgue spaces in the second line
and the uniform boundedness of the $L^1$-norms, to derive
\[
   \tot{}{t} \!\int_{\R^d}\!( n^{j+1} {+} p^{j+1}) \d x \leq - \eps
   \int_{\R^d}\!( n^{j+1}  {+} p^{j+1})  \d x
     + Cj(j{+}\eps) \sup_{0\leq\tau\leq t} \left( \int_{\R^d} \!( n^\frac{j+1}{2} {+} p^\frac{j+1}{2}) \d x\right)^2 + \frac{C}{j}.
\]
Again, Lemma 4.2 of \cite{DiF-Fel-Mar} gives uniform boundedness of $n$ and $p$
in $L^\infty$.
\end{proof}

Similarly to the $x$-independent case we need a comparison principle for $\ie$,
which we obtain with respect to the measure $\ie^*$ as follows.

\begin{lemma} Let $\ie_0 \in L^1(\R^d) \cap L^\infty(\R^d)$ satisfy 
\begin{equation}\label{ass.e0.x}
\underline{\ie}\, \ie^*\leq e_0(x)\leq \bar \ie \, \ie^*, \qquad x\in \mathbb{R}^d
\end{equation}
for some $0<\underline{e}\leq \bar e<\infty$. Then the solution $\ie$ to \eqref{scM3} remains within these bounds for all times,
\begin{equation}\label{max.prin.e}
\underline{\ie}\, \ie^*\leq e(t,x)\leq \bar \ie \, \ie^*, \qquad
\qquad x\in \mathbb{R}^d,\ t>0\,. 
\end{equation}
\end{lemma}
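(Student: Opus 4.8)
The plan is to reduce the Fokker--Planck equation \eqref{scM3} to a pure advection--diffusion equation for the \emph{relative density} $q:=\ie/\ie^*$ and then run a weighted $L^2$-energy estimate in the space $L^2(\ie^*\,\d x)$. First I would record that $\ie^*=\exp(-2V)$ is itself a stationary solution of \eqref{scM3}: since $\grad \ie^*=-2\ie^*\grad V$, we have $\grad \ie^*+2\ie^*\grad V=0$. Writing $\ie=q\,\ie^*$ and using this identity gives the crucial simplification $\grad \ie+2\ie\grad V=\ie^*\grad q$, so that \eqref{scM3} collapses to the conservative form $\ie^*\partial_t q=\grad\cdot(\ie^*\grad q)$, i.e.
\[
   \partial_t q &=& \laplace q-2\grad V\cdot\grad q .
\]
This operator carries no zeroth-order term, so constants are solutions, and the hypothesis \eqref{ass.e0.x} translates into the two-sided bound $\underline{\ie}\leq q_0\leq\bar\ie$ on the initial datum.

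The key structural fact is that the generator $\mathcal{L}q:=\laplace q-2\grad V\cdot\grad q$ is self-adjoint in $L^2(\ie^*\,\d x)$, with $\int_{\R^d}(\mathcal{L}q)\,\phi\,\ie^*\,\d x=-\int_{\R^d}\ie^*\,\grad q\cdot\grad\phi\,\d x$ for admissible test functions $\phi$. To obtain the upper bound in \eqref{max.prin.e} I would test the $q$-equation against $\phi=(q-\bar\ie)_+$ and compute
\[
   \tot{}{t}\,\tfrac12\int_{\R^d}(q-\bar\ie)_+^2\,\ie^*\,\d x
      &=& -\int_{\R^d}\ie^*\,\big|\grad (q-\bar\ie)_+\big|^2\,\d x\leq 0 .
\]
Since $(q_0-\bar\ie)_+\equiv 0$ by hypothesis, the left-hand quantity stays zero for all $t\geq 0$, forcing $q\leq\bar\ie$, that is $\ie\leq\bar\ie\,\ie^*$. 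The lower bound follows identically by testing against $(\underline{\ie}-q)_+$, which yields $q\geq\underline{\ie}$ and hence $\underline{\ie}\,\ie^*\leq\ie$, completing \eqref{max.prin.e}.

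The main obstacle is purely technical: justifying the integration by parts on the whole space $\R^d$, i.e.\ verifying that the contributions at infinity vanish and that every integral occurring is finite. This is precisely where the confining assumption \eqref{V.ass} enters, since it forces $V(x)\to\infty$ as $|x|\to\infty$ and hence exponential decay of the weight $\ie^*=\exp(-2V)$; combined with the uniform $L^1\cap L^\infty$ control of $\ie$ from Lemma \ref{lemma:L^infty-ie}, this furnishes the decay needed to legitimize the weighted estimate. Concretely I would insert a smooth cut-off $\chi_R(x)=\chi(x/R)$ into the test function, carry out the estimate on the truncated region, and let $R\to\infty$, checking that the commutator terms proportional to $\grad\chi_R$ vanish because $\ie^*\grad q$ is integrable. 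This standard limiting procedure, rather than any conceptual difficulty, is the only real work in the argument.
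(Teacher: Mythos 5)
Your proposal is correct and follows essentially the same route as the paper: the paper also introduces the relative density $f=\ie/\ie^*$, rewrites \eqref{scM3} as $\ie^*\pl_t f=\nabla\cdot(\ie^*\nabla f)$, and tests against the truncation $(f-\underline\ie)_-$ (equivalently your $(\underline\ie-q)_+$) to get the same signed weighted energy identity, with the upper bound handled symmetrically. Your additional remarks on justifying the integration by parts on $\R^d$ via cut-offs go slightly beyond what the paper writes out, but do not change the argument.
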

\begin{proof}
Recalling that $\ie^*=\exp(-2V)$ is a probability measure, we introduce
\[f=\ie\, \exp(2V)=\frac{\ie}{\ie^*}\]
and equation \eqref{scM3} becomes
\[
   \exp(-2V) \pl_t f = \nabla\cdot(\exp(-2V)\nabla f)\,.
\]
Multiplication by the negative part of $f-\underline \ie$,
i.e. $(f-\underline \ie)_-$, and integration by parts gives
\begin{eqnarray*}
 \frac12 \frac{d}{dt}\int_{\mathbb{R}^d}[(f-\underline \ie)_-]^2 \exp(-2V)\d x = - \int_{\mathbb{R}^d}|\nabla(f-\underline \ie)_-|^2 \exp(-2V) \d x \leq 0\,.
\end{eqnarray*}
Since $(f-\underline \ie)_-(t=0)=0$, this property is retained for all times. The upper bound follows using the same argumentation. 
\end{proof}

\subsubsection{Convergence to equilibrium}
The proof of convergence of the solution towards the stationary state
relies on the exponential decay of the negative entropy $\calH$.  This
follows, as in Section \ref{ss:CtE:x-indep}, from the entropy
entropy-production inequality. Additional care has to be taken here
due to the $x$-dependent stationary states, which requires to
work with the reference probability measure $\ie^*\d x$.

\begin{proposition}[Entropy entropy-production estimate II]\label{prop:EEP2}
  Let $(n,p,\ie)$ be a nonnegative solution to \eqref{eq:scM} in
  $L^\infty(0,\infty; L^1(\R^d))\cap L^\infty((0,\infty)\times \R^d)$,
  and $\ie$ satisfy the bounds \eqref{max.prin.e}.  Then there
  exists a constant $K>0$ such that
\begin{equation}\label{K.x}
  \calH(n,p,\ie) \leq K\, \calP(n,p,\ie) \,.
\end{equation}
\end{proposition}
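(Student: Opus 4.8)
The plan is to transport the torus argument of Proposition~\ref{prop:EEP} to the full space by replacing averaging over $\calT^d$ with averaging against the confining probability measure $\dd\nu:=\ie^*\dd x=\exp(-2V)\dd x$, which the passage itself flags as the correct reference. First I would reformulate $\calH$ exactly in the shape of \eqref{F.ind.0}. Since $w_n=n^*\sqrt{\ie/\ie^*}$, $n^*=C_n\ie^*$ and $C_nC_p=1$, the pointwise identity $\ln(n/w_n)=\tfrac12\ln\tfrac{n}{\bar n\ie^*}+\tfrac12\ln\tfrac{n}{\bar n\ie}+\ln(\bar n/C_n)$, with $\bar n:=\int_{\R^d} n\dd x$ and its $p$-analogue, splits \eqref{ent.F.x} into: two density-fluctuation pieces $\tfrac12\int n\ln\tfrac{n}{\bar n\ie^*}\dd x$ and $\tfrac12\int n\ln\tfrac{n}{\bar n\ie}\dd x$ (and likewise for $p$); two macroscopic pieces $C_n\LB(\bar n/C_n)+C_p\LB(\bar p/C_p)$; and, once the corrections $\int(n-w_n)\dd x$ absorb a term $\tfrac{C_n+C_p}{2}\int(\sqrt\ie-\sqrt{\ie^*})^2\dd x$, a residual energy piece $\tfrac c2\int(\sqrt\ie-\sqrt{\ie^*})^2\dd x$. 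This is structurally \eqref{F.ind.0}, now with $x$-dependent $\ie^*$.

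Next I would dominate the density-fluctuation pieces by the first two dissipation groups of \eqref{ent.diss.1}. This is where assumption \eqref{V.ass} is essential: because $V$ is an $L^\infty$-perturbation of a uniformly convex function, the Bakry--Émery criterion together with the Holley--Stroock perturbation lemma furnish a logarithmic Sobolev (hence Poincaré) inequality for $\nu$ with constant $C_{LS}$. Writing $\tfrac12\int n\ln\tfrac{n}{\bar n\ie^*}\dd x$ as $\tfrac{\bar n}2$ times the $\nu$-relative entropy of the probability density $\tfrac{n}{\bar n\ie^*}$ and applying the log-Sobolev inequality \eqref{ineq:logSobolev} for $\nu$ bounds it by $\tfrac{C_{LS}}{8}\int n|\grad\ln(n/\ie^*)|^2\dd x$. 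For the mixed piece $\tfrac12\int n\ln\tfrac{n}{\bar n\ie}\dd x$ I use the generalized log-Sobolev inequality for the probability measure $\ie\dd x$, whose constant is controlled by $C_{LS}$ and the ratio bounds \eqref{max.prin.e} (Holley--Stroock again, since $\ln(\ie/\ie^*)\in[\ln\underline\ie,\ln\bar\ie]$); this is precisely where the comparison principle \eqref{max.prin.e} enters. For the energy piece I rewrite the third term of \eqref{ent.diss.1} with $g:=(\ie/\ie^*)^{1/4}$ as $4(C_n{+}C_p{+}c)\int|\grad g|^2\dd x$ and note $\tfrac c2\int(\sqrt\ie-\sqrt{\ie^*})^2\dd x=\tfrac c2\int\ie^*(g^2-1)^2\dd x$. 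From $\int\ie^* g^4\dd x=\int\ie\dd x=1=\int\ie^*\dd x$, the Poincaré inequality for $\nu$ applied to $g^2$ yields $\int\ie^*(g^2-1)^2\dd x\le 8C_P\int\ie^* g^2|\grad g|^2\dd x$, and the bound $\ie^* g^2=\sqrt{\ie\ie^*}\le\sqrt{\bar\ie}\,\Norm{\ie^*}_{L^\infty}$ from \eqref{max.prin.e} turns this into a multiple of $\int|\grad g|^2\dd x$, i.e.\ of the energy dissipation.

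It then remains to absorb the macroscopic terms $C_n\LB(\bar n/C_n)+C_p\LB(\bar p/C_p)$ into the reaction dissipation, mirroring \eqref{bound.n.bar}--\eqref{C_1}. Using $w_nw_p=\ie\ie^*$, the elementary inequality \eqref{el.in} and Jensen bound the reaction term below by $4k_0\int(\sqrt{\rho np}-\sqrt\ie)^2\dd x$; the splitting $(\sqrt{\rho np}-\sqrt\ie)^2\ge\tfrac12(\sqrt{\rho np}-\sqrt{\ie^*})^2-(\sqrt\ie-\sqrt{\ie^*})^2$ isolates, via $\rho=1/\ie^*$ and $\hat n:=n/\ie^*$, the term $\tfrac12\int\ie^*(\sqrt{\hat n\hat p}-1)^2\dd x$, while the negative remainder is swallowed by the energy dissipation of the previous step. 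Finally I ``interchange square roots and integration'' against $\nu$ as in \eqref{C_1}: with $\delta_n:=\sqrt{\hat n}-\int\sqrt{\hat n}\dd\nu$, Jensen gives $\int\sqrt{\hat n}\dd\nu\le\sqrt{\bar n}$ with defect $\int\delta_n^2\dd\nu$ controlled by the Poincaré inequality for $\nu$ through $\tfrac{C_P}{4}\int n|\grad\ln(n/\ie^*)|^2\dd x$, i.e.\ by the diffusion dissipation. Invoking the auxiliary estimate \eqref{bound.n.bar} and $C_nC_p=1$ reduces the macroscopic terms to $C_0(\sqrt{\bar n\bar p}-1)^2$, which is dominated by the reaction dissipation. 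Collecting the four groups and tracking constants gives \eqref{K.x}.

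The main obstacle is the functional-inequality bookkeeping in the full-space, $x$-dependent regime. Unlike on $\calT^d$, no Poincaré inequality holds for the flat measure, so the confinement from $V$ and the two-sided bound \eqref{max.prin.e} on $\ie$ must carry the whole estimate: one has to establish the log-Sobolev/Poincaré inequalities for $\nu=\exp(-2V)\dd x$ from \eqref{V.ass}, then transfer them to the $\ie$-weighted measure with constants controlled \emph{solely} by \eqref{max.prin.e}, and above all verify that every fluctuation defect and every negative remainder produced along the way is genuinely absorbed into one of the four nonnegative dissipation groups in \eqref{ent.diss.1}. Keeping all integrals against the probability measure $\nu$ throughout is what prevents the $x$-dependent weight $\ie^*$ from causing divergence at infinity.
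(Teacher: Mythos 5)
Your proposal is correct and follows essentially the same route as the paper: the identical reformulation of $\calH$ via $\ln(n/w_n)=\tfrac12\ln\tfrac{n}{\bar n\ie^*}+\tfrac12\ln\tfrac{n}{\bar n\ie}+\ln(\bar n/C_n)$, the generalized log-Sobolev inequalities for the probability measures $\ie^*\,\dd x$ and $\ie\,\dd x$, Jensen plus the elementary inequality for $\calP_R$, the Poincar\'e-controlled interchange of square roots and integration against $\ie^*\,\dd x$, and the appeal to the auxiliary lemmas for the macroscopic terms. Your only deviations are cosmetic — you bound the energy piece by a Poincar\'e inequality applied to $(\ie/\ie^*)^{1/2}$ where the paper uses the Sobolev embedding \eqref{ineq:Sobolev}, and you remove the $\sqrt{\ie/\ie^*}$ from the reaction term by a pointwise quadratic splitting rather than the paper's defect $\delta_\ie$ — and you make explicit (via Bakry--\'Emery and Holley--Stroock from \eqref{V.ass}) the origin of the log-Sobolev constants that the paper leaves implicit.
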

\begin{proof}
We recall that $\ie \dd x$ and $\ie^*\d x=\exp(-2V)\d x$ are probability measures
and let
\[
   \bar n = \int_{\R^d} n \dd x = \Norm{n}_{L^1}\,,\qquad   \bar p = \int_{\R^d} p \dd x = \Norm{p}_{L^1}\,.
\]

Using the identity
\[
   \int_{\R^d} n \left(\ln \frac{n}{w_n(\ie)}-1\right) \d x &=& \frac12 \int_{\R^d} \left(n\ln \frac{n}{C_n^2\ie^*} + n\ln \frac{n}{\ie} \right) \d x - \bar{n} \\
    &=& \frac12 \int_{\R^d} \left(n\ln \frac{n}{\ie^* \bar{n}} + n\ln \frac{n}{\ie\bar {n}} \right) \d x
    +\left(\bar{n}\ln \frac{\bar{n}}{C_n}- \bar{n}\right)
\]
and its equivalent for $p$, we rewrite the entropy as follows,
\begin{align*}
   \calH(n,p,\ie) &= \frac{1}{2}\int_{\R^d} \left(n\ln \frac{n}{\ie^*\bar{n}}  + n\ln \frac{n\, }{\bar{n}\,\ie } \right) \d x
     + C_n\LB\big(\frac{\bar{n}}{C_n}\big)\\
   &\quad +  \frac{1}{2} \int_{\R^d} \left(p\ln \frac{p}{\ie^*\bar{p}}
     + p\ln \frac{p}{\bar{p}\,\ie } \right) \d x  
     + C_p\LB\big( \frac{\bar{p}}{C_p}\big) \\
   &\quad + \frac{c}{2} \int_{\R^d} \left(\sqrt{\ie}-\sqrt{\ie^*}\right)^2 \d x,
\end{align*}
where we used the identity
\[
   \int_{\R^d} (w_n + w_p)\d x - (C_n+C_p) = -\frac{C_n+C_p}{2} \int_{\R^d} (\sqrt{e}-\sqrt{e^*})^2 \d x.
\]
We also reformulate the entropy production as $\calP(n,p,\ie)=\calP_n + \calP_p + \calP_\ie + \calP_R$, with
\begin{align*}
 &\calP_n = 2 \int_{\R^d} \left|\grad\sqrt{n/\ie^*}\right|^2 \ie^*
      \d x + 2 \int_{\R^d} \left|\grad\sqrt{n/\ie}\right|^2 \ie \dd x,
   &&
  \calP_\ie = 8 \int_{\R^d} \left| \grad\sqrt[4]{\ie/\ie^*} \right|^2 \ie^* \d x,
\\
    &\calP_p = 2 \int_{\R^d} \left|\grad\sqrt{p/\ie^*}\right|^2 \ie^* \d x  + 2 \int_{\R^d} \left|\grad\sqrt{p/\ie}\right|^2 \ie \dd x,
&&
    \calP_R = \int_{\R^d} \!\! k\left(\frac{\rho np}{\ie}{-}1 \right) \ln\frac{\rho np}{\ie} \ie \dd x.
\end{align*}


The generalized logarithmic Sobolev inequality  \cite{AMTU} with respect to the probability measures $\ie^* \d x$, $\ie \d x$
directly implies the following bound on the first and third term of $\calH(n,p,\ie)$:
\[
   \int_{\R^d} \left(n\ln \frac{n}{\bar n\ie^*} + n \ln\frac{n}{\bar n\,\ie} + p \ln\frac{p}{\bar p\ie^* } + p \ln\frac{p}{\bar p\,\ie}\right) \d x
     \leq C_{LS}(\ie,\ie^*)(\calP_n + \calP_p)\,.
\]
Moreover, the Sobolev embedding \eqref{ineq:Sobolev} gives
\[
   1 = \Norm{\sqrt[4]{\ie/\ie^*}}_{L^4(\ie^* \d x)}^2 \leq C\Norm{\nabla\sqrt[4]{\ie/\ie^*}}_{L^2(\ie^* \d x)}^2
       + \Norm{\sqrt[4]{\ie/\ie^*}}_{L^2(\ie^* \d x)}^2
     = C \calP_\ie + \int_{\R^d} \sqrt{\ie}\sqrt{\ie^*} \d x.
\]
Using again the fact that $\ie \d x$ and $\ie^* \d x$ are probability measures, we obtain
\begin{equation}\label{De.x}
   \calP_\ie \geq C\int_{\R^d} \left(\sqrt{\ie}-\sqrt{\ie^*}\right)^2 \d x.
\end{equation}
For the entropy terms containing the averages we proceed similarly as
in the proof of Proposition \ref{prop:EEP} to obtain
\( 
\label{boundS_1} \calH(n,p,\ie) \leq C(\calP_n + \calP_p +
\calP_\ie) + C(\bar{n},\bar{p})
\left(\sqrt{\bar{n}\bar{p}}-1\right)^2, 
\) 
Now the idea is to bound
the additional terms depending on $\bar{n}, \bar{p}$ using the
entropy-production term $\calP_R$ resulting from the reactive terms.
We employ the elementary inequality $\ln(y)(y-1) \geq 4(\sqrt{y}-1)^2$
and Jensen's inequality, also recalling that $n^*
p^*=e^{-2V}\ie^*=\rho^{-1}\ie^*=(\ie^*)^2$, to obtain 
\begin{align} 
\calP_R &=
\int_{\R^d} k\, \ln
\frac{\rho(x)np}{\ie}\left(\frac{\rho(x)np}{\ie}-1\right) \ie \dd x
\geq 4 k_0 \int_{\R^d} \left(\sqrt{\frac{\rho(x)np}{\ie}}-1\right)^2 \ie \dd x =     \nonumber\\
&= 4 k_0 \int_{\R^d} \left(\sqrt{\frac{np}{n^*
      p^*}}-\sqrt{\frac{\ie}{\ie^*}}\right)^2 \ie^* \d x\geq
4k_0\left(\int_{\R^d}\Big( \sqrt{\frac{n p }{(\ie^*)^2}}-
  \sqrt{\frac{\ie}{\ie^*}}\Big) \ie^* \dd x\right)^2.
   \label{boundS_3}
\end{align}
We are therefore left to interchange square roots and integration in order to complete the estimation of $\calH(n,p,\ie)$
in terms of the entropy production.
We shall proceed as in \cite{DiF-Fel-Mar} and introduce a generalization of $\delta_n$, $\delta_p$ in \eqref{delta.n} as follows
\[
   \sqrt{\frac{n}{\ie^*}} = \int_{\R^d} \sqrt{\frac{n}{\ie^*}} \ie^* \d x +\delta_n, \qquad
   \sqrt{\frac{p}{\ie^*}} = \int_{\R^d} \sqrt{\frac{p}{\ie^*}} \ie^* \d x +\delta_p,
\]
and $\delta_\ie$ as
\[
   \sqrt{\frac{\ie}{\ie^*}} = 1 + \delta_\ie \,.
\]
Note that, by definition, we have
\[
   \int_{\R^d} \delta_n \ie^* \d x=0, \qquad
   \int_{\R^d} \delta_n^2 \ie^* \d x \leq  \bar{n} 
\]
Then, due to the Poincar\'e inequality, we have
\[
   \int_{\R^d} \delta_n^2 \ie^* \d x\leq C \Norm{\nabla \sqrt{n/\ie^*}}^2_{L^2(\ie^* \d x)}
\leq C \calP_n\,.
\]
Clearly, analogous properties hold for $\delta_p$. Moreover, due to \eqref{De.x},
\[
   \frac12\int_{\R^d} \delta_\ie^2 \ie^* \d x = 1-\int_{\R^d} \sqrt{\ie}\sqrt{\ie^*} \d x = \frac12 \int_{\R^d} (\sqrt{\ie^*}-\sqrt{\ie})^2 \d x \leq C \calP_\ie.
\]
We now expand \eqref{boundS_3} as
\(  \label{boundS_4}
  \calP_R &\geq& C\left(\int_{\R^d} \sqrt{\frac{n}{\ie^*}} \ie^* \d x \int_{\R^d} \sqrt{\frac{p}{\ie^*}} \ie^* \d x - 1 
   + \int_{\R^d} \delta_n\delta_p \ie^* \d x+\int_{\R^d} \delta_\ie \ie^* \d x\right)^2  \nonumber \\
   &\geq& C\left(\int_{\R^d} \sqrt{\frac{n}{n^*}} \ie^* \d x\int_{\R^d} \sqrt{\frac{p}{p^*}} \ie^* \d x - 1\right)^2
   - C\int_{\R^d} (\delta_n^2+\delta_p^2+\delta_\ie^2) \ie^* \d x \, \nonumber \\
  &\geq& C\left(\int_{\R^d} \sqrt{\frac{n}{n^*}} \ie^* \d x\int_{\R^d} \sqrt{\frac{p}{p^*}}\ie^* \d x- 1\right)^2
   - C (\calP_n + \calP_p + \calP_\ie) \,.
\)
Introducing $R_n =\big( \sqrt{\bar n}+\int_{\mathbb{R}^d}\sqrt{\frac{n}{\ie^*}}\ie^* \d x\big)^{-1}$
and observing that 
$$
\int_{\mathbb{R}^d}\sqrt{\frac{n}{\ie^*}} \ie^* \d
x = \sqrt{\bar n} - R_n\int_{\mathbb{R}^d}\delta_n^2 \dd x,
$$ 
we proceed as in the proof of Proposition \ref{prop:EEP}, using
Lemmas \ref{aux.1} and \ref{aux.2} of the Appendix, to conclude the
desired estimate 
$   \calH(n,p,\ie) \leq K (\calP_n + \calP_p + \calP_\ie + \calP_R) = K\calP(n,p,\ie)$.
\end{proof}

Using this Lemma we are now able to prove convergence towards the steady state.

\begin{theorem}[Exponential convergence towards steady state]
Let $(n,p,\ie)$ be the nonnegative solution of the system \eqref{eq:scM}
with nonnegative initial data $(n_0,p_0,\ie_0)$
such that the initial entropy $\calH(n_0,p_0,\ie_0)$ is finite and $\ie_0$ satisfies \eqref{ass.e0.x}. 
Then the solution converges exponentially fast to the steady state $(n^*,p^*,\ie^*)$,
\(   \label{conv2}
   \Norm{n-n^*}_{L^1(\R^d)}^2 + \Norm{p-p^*}_{L^1(\R^d)}^2 + \|\sqrt{\ie}-\sqrt{\ie^*}\|_{L^2(\R^d)}^2  \leq  C\,\exp(-Kt)
\)
with $C$ a positive constant and $K>0$ as in \eqref{K.x}.
\end{theorem}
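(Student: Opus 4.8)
The plan is to mirror the proof of Theorem~\ref{thm:conv1}, replacing the spatially constant equilibrium by the $x$-dependent one and the Lebesgue measure by the reference probability measure $\ie^*\dd x=\exp(-2V)\dd x$. First I would verify that all hypotheses of Proposition~\ref{prop:EEP2} hold along the flow: the uniform bound $\sup_{t\ge0}\big(\Norm{n}_{L^1(\R^d)}+\Norm{p}_{L^1(\R^d)}\big)<\infty$ from Lemma~\ref{lemma:L1bdd-x-dep}, the $L^\infty$-bounds for $n,p$ and for $\ie$ (Lemma~\ref{lemma:L^infty-ie}), and the comparison principle \eqref{max.prin.e}, which is guaranteed by assumption \eqref{ass.e0.x} on $\ie_0$. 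With these in force, Proposition~\ref{prop:EEP2} provides $\calH(n,p,\ie)\le K\,\calP(n,p,\ie)$ for all $t>0$ with a single constant $K$.

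Next I would combine this entropy entropy-production estimate with the dissipation relation $\tot{}{t}\calH(n,p,\ie)=-\calP(n,p,\ie)$, which holds exactly as in \eqref{diss.rel}. A Gr\"onwall argument then yields exponential decay of the relative entropy, $\calH(n(t),p(t),\ie(t))\le\calH(n_0,p_0,\ie_0)\exp(-Kt)$, with the rate governed by $K>0$ from \eqref{K.x} in the convention of Theorem~\ref{thm:conv1}. It then remains to bound the three quantities on the left of \eqref{conv2} from below by $\calH$.

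For this lower bound I would split, via the triangle inequality, $\Norm{n-n^*}_{L^1(\R^d)}^2\le 2\Norm{n-w_n(\ie)}_{L^1(\R^d)}^2+2\Norm{w_n(\ie)-n^*}_{L^1(\R^d)}^2$. The first term is controlled by the Csisz\'ar--Kullback--Pinsker inequality \eqref{ineq:CKP}: one has $\Norm{n-w_n}_{L^1(\R^d)}^2\le\tfrac13\big(2\Norm{n}_{L^1(\R^d)}+4\Norm{w_n}_{L^1(\R^d)}\big)\int_{\R^d}w_n\LB(n/w_n)\dd x$, and the last integral is exactly one of the nonnegative summands of $\calH$ in \eqref{ent.F.x}. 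For the second term I would exploit the linear structure $w_n(x,\ie)-n^*(x)=C_n\exp(-V)\big(\sqrt{\ie}-\sqrt{\ie^*}\big)$, which comes from $\sqrt{\ie^*}=\exp(-V)$, together with Cauchy--Schwarz and the normalization $\int_{\R^d}\exp(-2V)\dd x=1$, to obtain $\Norm{w_n-n^*}_{L^1(\R^d)}^2\le C_n^2\,\|\sqrt{\ie}-\sqrt{\ie^*}\|_{L^2(\R^d)}^2$. The same bounds apply to $p$, while $\|\sqrt{\ie}-\sqrt{\ie^*}\|_{L^2(\R^d)}^2$ is itself proportional to the energy summand of $\calH$ in \eqref{ent.F.x}. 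Summing these estimates and inserting the exponential decay of $\calH$ yields \eqref{conv2}.

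The main obstacle I anticipate is twofold. First, one must ensure that the prefactor $2\Norm{n}_{L^1(\R^d)}+4\Norm{w_n}_{L^1(\R^d)}$ in the CKP step is uniformly bounded in time; this is precisely where the uniform $L^1$-bound of Lemma~\ref{lemma:L1bdd-x-dep} (together with $\Norm{w_n}_{L^1(\R^d)}\le C_n$, obtained from $\int_{\R^d}\exp(-2V)\dd x=1$ and mass conservation of $\ie$) is indispensable, so that the constant $C$ in \eqref{conv2} is genuinely time-independent. Second, the $x$-dependence forces the lower bound to respect the weight $\exp(-2V)$: the clean collapse $w_n-n^*=C_n\exp(-V)\big(\sqrt{\ie}-\sqrt{\ie^*}\big)$ is what makes the heat component of $\calH$ reappear in exactly the $L^2$-form demanded by \eqref{conv2}, and checking that this structure is preserved is the only genuinely $x$-dependent computation beyond the template of Theorem~\ref{thm:conv1}.
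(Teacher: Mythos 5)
Your proposal is correct and follows essentially the same route as the paper: triangle-inequality splitting of $\Norm{n-n^*}_{L^1}^2$ into $\Norm{n-w_n}_{L^1}^2$ plus $\Norm{w_n-n^*}_{L^1}^2$, the Csisz\'ar--Kullback--Pinsker inequality on the first piece, the factorization $w_n-n^*=C_n\exp(-V)(\sqrt{\ie}-\sqrt{\ie^*})$ with the normalization of $\exp(-2V)$ on the second, and then Proposition~\ref{prop:EEP2} together with the dissipation relation and the uniform $L^1$-bounds of Lemma~\ref{lemma:L1bdd-x-dep} to conclude. The paper compresses the latter steps into a reference to the proof of Theorem~\ref{thm:conv1}; you have merely made them explicit.
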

\begin{proof}
We write
\[
    \Norm{n-n^*}_{L^1(\R^d)}^2 \leq 2 \left( \Norm{n-w_n}_{L^1(\R^d)}^2 + \Norm{w_n-n^*}_{L^1(\R^d)}^2 \right),
\]
and use the Csisz\'ar-Kullback-Pinsker inequality \eqref{ineq:CKP} of the Appendix,
\[
    \Norm{n-w_n}_{L^1(\R^d)}^2 &\leq& \frac13\left( 2 \Norm{n}_{L^1(\R^d)} + 4 \Norm{w_n}_{L^1(\R^d)} \right)
        \int_{\R^d} n\ln\frac{n}{w_n} - (n-w_n) \,\d x \,.
\]
Then, we proceed as in the proof of Theorem \ref{thm:conv1}: we
combine the uniform boundedness of $\Norm{n}_{L^1(\R^d)}$ and
$\Norm{w_n}_{L^1(\R^d)}$ provided by Lemma \ref{lemma:L1bdd-x-dep},
the dissipation relation \eqref{diss.rel} and the entropy
entropy-production estimate of Proposition \ref{prop:EEP2} to conclude
\eqref{conv2}.
\end{proof}

\appendix
\section{Appendix}\label{s:App}
A probability measure $d\nu$ satisfies the  logarithmic Sobolev inequality if there exists a constant $C>0$ such that
\(   \label{ineq:logSobolev}
     \int f\ln \frac{f}{\|f\|_{L^1(d\nu)}} d\nu \leq C\|\nabla \sqrt{f}\|^2_{L^2(d\nu)}
\)
for every $f\in L^1(\d\nu)$.
For more details we refer to \cite{AMTU}.

The Csisz\'ar-Kullback inequality, see e.g.\ \cite{UAMT00}, states for
the probability densities $f$ and $g $ that 
\(   \label{ineq:CK}
    \Norm{f-g}^2_{L^1} \leq C\int  g \,\LB\big(\frac{f}{g}\big) \d x \,.
\)
A generalization to the case when $f$, $g$ are not probability measures is provided
by the following Czisz\'ar-Kullback-Pinsker inequality: 
\def\iO{\int_\Omega}
\begin{lemma} \label{CziKull}
Let $\Omega$ be a measurable domain in $\R^{d}$. Let $f, g: \Omega \to \R_+$ be measurable. Then,
\begin{equation}  \label{ineq:CKP}
\iO  g \,\LB\big(\frac{f}{g}\big)\dd x \ge
\frac{3}{2\Norm{f}_{L^1} + 4\Norm{g}_{L^1}} \Norm{f-g}^2_{L^1}.
\end{equation}
\end{lemma}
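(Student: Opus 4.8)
The plan is to reduce the integral inequality \eqref{ineq:CKP} to an elementary scalar estimate and then to close the argument with a single application of Cauchy--Schwarz. Writing $t=f/g$, the left-hand integrand is $g\,\LB(f/g)$, while the quantity to be controlled is $\Norm{f-g}_{L^1}=\iO g\,|t-1|\dd x$. The bridge between the two will be the pointwise inequality
\begin{equation}\label{eq:pw-ckp}
3\,(t-1)^2 \le (2t+4)\,\LB(t)\qquad\text{for all }t\ge 0,
\end{equation}
which, after multiplying by $g^2$ and substituting $t=f/g$, becomes
\[
3\,(f-g)^2 \le (2f+4g)\,g\,\LB(f/g)
\]
pointwise on $\Omega$, with the conventions $0\log0=0$ and the observation that on $\{g=0,\,f>0\}$ the right-hand side equals $+\infty$, so that the bound holds trivially there.

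First I would establish \eqref{eq:pw-ckp}. Setting $\phi(t):=(2t+4)\LB(t)-3(t-1)^2$ and recalling $\LB'(t)=\log t$ and $\LB''(t)=1/t$, a direct differentiation yields $\phi(1)=\phi'(1)=\phi''(1)=0$ together with
\[
\phi'''(t)=\frac{4(t-1)}{t^2}.
\]
Hence $\phi''$ has its only interior minimum at $t=1$ with value $0$, so $\phi''\ge 0$; therefore $\phi'$ is nondecreasing and vanishes at $t=1$, which forces $\phi$ to attain its global minimum $0$ at $t=1$. This proves $\phi\ge 0$ on $(0,\infty)$, and continuity extends it to $t=0$, where one checks directly that $3=3(0-1)^2\le(0+4)\LB(0)=4$.

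Next I would take square roots in the pointwise bound, factor the right-hand side, and integrate:
\[
\iO|f-g|\dd x \le \frac1{\sqrt3}\iO\sqrt{2f+4g}\;\sqrt{g\,\LB(f/g)}\;\dd x
  \le \frac1{\sqrt3}\Big(\iO(2f+4g)\dd x\Big)^{1/2}\Big(\iO g\,\LB(f/g)\dd x\Big)^{1/2},
\]
the last step being Cauchy--Schwarz. Squaring and rearranging gives precisely
\[
\iO g\,\LB(f/g)\dd x \ge \frac{3}{2\Norm{f}_{L^1}+4\Norm{g}_{L^1}}\,\Norm{f-g}_{L^1}^2,
\]
which is \eqref{ineq:CKP}.

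The only genuinely nontrivial ingredient is the scalar inequality \eqref{eq:pw-ckp}; the rest is bookkeeping. I expect the main obstacle to lie in verifying its sign structure: the constants $2$, $4$, and $3$ are chosen precisely so that $\phi'''$ changes sign only at $t=1$, which is what pins $\phi$ to its minimum there. The Cauchy--Schwarz step and the handling of the degenerate sets $\{g=0\}$ and $\{f=0\}$ require only routine care, since those sets only make the inequality easier.
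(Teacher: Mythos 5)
Your proof is correct and follows essentially the same route as the paper: the pointwise Pinsker-type inequality $3(t-1)^2\le(2t+4)\LB(t)$, followed by a square-root factorization and a single Cauchy--Schwarz step. The only difference is that you supply a full derivation of the scalar inequality (via the sign of $\phi'''$), which the paper simply cites as elementary.
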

\begin{proof}
The elementary estimate $3|u-1|^2\le (2u+4)\LB(u)$ for $u\in\R$ (Pinsker) gives
\begin{align*}
\Norm{f-g}_{L^1} =& \iO \left|\frac{f}{g}-1\right|g\,\dd x \le 
\iO \sqrt{2\frac{f}{g}+4}\; \sqrt{\LB\big(\frac{f}{g}\big)} \;\frac{g}{\sqrt{3}}\,\dd x\\
\le&\frac{1}{\sqrt{3}}\sqrt{\iO (2f{+}4g)\dd x}\; 
\sqrt{\iO g \,\LB\big(\frac{f}{g}\big)\dd x},
\end{align*}
where we used the Cauchy-Schwarz inequality.
\end{proof}

\begin{lemma}[Sobolev imbedding]
For dimensions $d\leq 4$ we have for a probability measure $\d\nu$ the Sobolev imbedding
\(   \label{ineq:Sobolev}
   \Norm{f}^2_{L^4(\d\nu)} \leq C\Norm{\nabla f}^2_{L^2(\d\nu)} + \Norm{f}_{L^2(\d\nu)}^2\,.
\) 
\end{lemma}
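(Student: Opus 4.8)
The plan is to reduce the stated inequality to the classical Sobolev embedding $H^1\hookrightarrow L^4$, the dimension restriction $d\le 4$ being exactly the threshold at which this embedding holds: for $d\ge 3$ the critical Sobolev exponent is $2^\ast=\tfrac{2d}{d-2}$, and $2^\ast\ge 4$ is equivalent to $d\le 4$ (the critical case $2^\ast=4$ occurring at $d=4$), while for $d\in\{1,2\}$ the embedding into $L^4$ is immediate. Since \eqref{ineq:Sobolev} is invoked in the paper for two concrete probability measures — Lebesgue measure on the unit-volume torus $\calT^d$ in Proposition \ref{prop:EEP} and the confinement weight $\ie^*\dd x=\exp(-2V)\dd x$ on $\R^d$ in Proposition \ref{prop:EEP2} — I would treat these two settings separately.

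For the torus case one has $\dd\nu=\dd x$ with $|\calT^d|=1$. I would invoke the Gagliardo--Nirenberg--Sobolev inequality $\Norm{f}_{L^{2^\ast}(\calT^d)}\le C\big(\Norm{\nabla f}_{L^2(\calT^d)}+\Norm{f}_{L^2(\calT^d)}\big)$, valid for $d\le 4$, and then use that, the torus having unit measure, $L^{2^\ast}(\calT^d)\hookrightarrow L^4(\calT^d)$ because $2^\ast\ge 4$ (for $d=4$ one reads off $L^4$ directly, for $d\le 2$ the embedding is classical). Squaring the resulting bound $\Norm{f}_{L^4}\le C'\big(\Norm{\nabla f}_{L^2}+\Norm{f}_{L^2}\big)$ and using $(a{+}b)^2\le 2a^2+2b^2$ yields precisely the stated form.

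For the weighted case $\dd\nu=\exp(-2V)\dd x$ I would exploit the hypothesis \eqref{V.ass} that $V=V_0+\phi$ with $V_0$ uniformly convex and $\phi\in L^\infty(\R^d)$. The bounded part contributes a factor $\exp(-2\phi)$ pinched between $\exp(\mp 2\Norm{\phi}_\infty)$, so a Holley--Stroock-type comparison reduces the weighted Sobolev inequality for $\nu$ to that for the strongly log-concave reference measure $\dd\nu_0=Z^{-1}\exp(-2V_0)\dd x$, at the cost of a constant depending only on $\Norm{\phi}_\infty$. It then suffices to establish \eqref{ineq:Sobolev} for $\nu_0$. This last step is the genuine obstacle: for a Gaussian-type weight one cannot simply transplant the Lebesgue inequality, since the substitution $u=f\exp(-V)$ matches the $L^2(\nu)$ norm but turns $\Norm{f}_{L^4(\nu)}$ into an $L^4$ norm with the growing weight $\exp(2V)$, and every naive reweighting runs into the same mismatch. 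I would instead use that the strong convexity $\mathrm{D}^2V_0\ge\lambda\,\mathrm{Id}$ gives, via the Bakry--\'Emery criterion, both a spectral gap and the logarithmic Sobolev inequality \eqref{ineq:logSobolev} for $\nu_0$, and obtain the $L^4$ bound for $d\le 4$ either through the associated curvature-dimension Sobolev inequality or, more directly, by relying on the weighted-Sobolev estimates already carried out for such confining potentials in \cite{DiF-Fel-Mar}, rather than reproving them here.

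Finally, I would record that in both cases the constant $C$ depends only on $d$, on $\Norm{\phi}_\infty$, and on the convexity constant $\lambda$ of $V_0$, hence is uniform in the quantities that vary along the evolution. This uniformity is exactly what is needed when \eqref{ineq:Sobolev} is inserted into the entropy entropy-production estimates of Propositions \ref{prop:EEP} and \ref{prop:EEP2}.
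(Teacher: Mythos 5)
Your torus argument has a genuine gap at its very last step, and it is precisely the step the lemma is designed to get right. Squaring $\Norm{f}_{L^4}\le C'\big(\Norm{\nabla f}_{L^2}+\Norm{f}_{L^2}\big)$ via $(a+b)^2\le 2a^2+2b^2$ yields $\Norm{f}^2_{L^4}\le 2(C')^2\Norm{\nabla f}^2_{L^2}+2\Norm{f}^2_{L^2}$, which is \emph{not} the stated form: the coefficient in front of $\Norm{f}^2_{L^2}$ is $2$, not $1$. The remark placed right after the lemma stresses that the coefficient $1$ is the whole point, and the applications show why: in Proposition \ref{prop:EEP} one applies \eqref{ineq:Sobolev} to $f=\sqrt[4]{\ie}$ and combines it with Jensen's inequality to get $\int_{\calT^d}\sqrt{\ie^*}\,\d x\le\Norm{f}^2_{L^4}\le\frac{C_S}{4c}\calP_\ie+\int_{\calT^d}\sqrt{\ie}\,\d x$, hence $\int_{\calT^d}(\sqrt{\ie^*}-\sqrt{\ie})\,\d x\le\frac{C_S}{4c}\calP_\ie$; with your factor $2$ the conclusion becomes $\int\sqrt{\ie^*}\,\d x-2\int\sqrt{\ie}\,\d x\le\frac{C_S}{4c}\calP_\ie$, whose left-hand side is negative at equilibrium while $\calP_\ie=0$, so it no longer controls the entropy contribution $c\int(\sqrt{\ie^*}-\sqrt{\ie})\,\d x$. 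The same degeneracy occurs in Proposition \ref{prop:EEP2}, where $1\le C\calP_\ie+2\int\sqrt{\ie\,\ie^*}\,\d x$ is vacuous because $\int\sqrt{\ie\,\ie^*}\,\d x$ can be close to $1$. The device you are missing is the one the paper uses: split $f=\bar f+(f-\bar f)$ with $\bar f=\int f\,\d\nu$; the Sobolev embedding \emph{combined with Poincar\'e} gives $\Norm{f-\bar f}_{L^4(\d\nu)}\le C\Norm{\nabla f}_{L^2(\d\nu)}$ with no zero-order remainder, while $\Norm{\bar f}_{L^4(\d\nu)}=|\bar f|\le\Norm{f}_{L^2(\d\nu)}$ because $\d\nu$ is a probability measure, so the triangle inequality loads the constant onto the gradient term only. (Even then one should verify that the cross term produced by squaring can be absorbed, e.g.\ by expanding $\int f^4\,\d\nu$ around $\bar f$ and exploiting $\int(f-\bar f)\,\d\nu=0$; the mean subtraction is in any case the indispensable idea.)

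A secondary point: your treatment of the weighted case is both heavier and less conclusive than necessary. The paper's proof is measure-agnostic---it invokes only the Sobolev embedding and the Poincar\'e inequality for the probability measure $\d\nu$ itself, and the same two lines cover Lebesgue measure on $\calT^d$ and $\ie^*\d x=\exp(-2V)\d x$ on $\R^d$ (for which Poincar\'e follows from \eqref{V.ass} via Bakry--\'Emery and Holley--Stroock, as you correctly note). Your detour through a curvature-dimension Sobolev inequality, ending in a deferral to \cite{DiF-Fel-Mar}, would at best deliver a two-sided constant and does not produce the coefficient-$1$ form either, so the gap identified above persists there as well.
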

Note that in the standard formulation of the Sobolev imbedding the constant $C$ would multiply the whole right hand side.
For our sake it is however important to have the coefficient 1 in front of the $L^2$-norm.
\\
\begin{proof}
Due to the ``standard'' Sobolev imbedding and the Poincar\'e inequality,
\[
   \Norm{f-\bar f}_{L^4(\d\nu)}\leq C\Norm{f-\bar f}_{H^1(\d\nu)} \leq C\Norm{\nabla f}_{L^2(\d\nu)}
\]
Then, since $\Norm{\bar f}_{L^4(\d\nu)}=\bar f \leq \Norm{f}_{L^2(\d\nu)}$ for the probability measure $\d\nu$, we have
\[
   \Norm{f}_{L^4(\d\nu)}-\Norm{f}_{L^2(\d\nu)} \leq \Norm{f-\bar f}_{L^4(\d\nu)} + \Norm{\bar f}_{L^4(\d\nu)} - \Norm{f}_{L^2(\d\nu)} \leq \Norm{f-\bar f}_{L^4(\d\nu)}.
\]
\end{proof}

\begin{lemma} For any $y > 0$ the following inequality holds,
\begin{equation}\label{el.in.2}
\LB(y) \leq 2 (1+\left|\ln y\right|) (\sqrt{y}-1)^2.
\end{equation}
\end{lemma}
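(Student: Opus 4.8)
The plan is to prove \eqref{el.in.2} after the substitution $y=t^2$ with $t=\sqrt y>0$, which turns the awkward factor $(\sqrt y-1)^2$ into the polynomial $(t-1)^2$ and writes $\LB(y)=2t^2\log t-t^2+1$. The target inequality becomes $2t^2\log t-t^2+1\le 2\big(1+2|\log t|\big)(t-1)^2$, whose right-hand side is smooth except at $t=1$, where $|\log t|$ is only Lipschitz. I would therefore split into the two regions $t\ge 1$ (i.e.\ $y\ge 1$) and $0<t\le 1$, treating the sign of $\log t$ as fixed in each, and verify the two one-sided estimates meet cleanly at $t=1$.

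Expanding and collecting terms reduces the two cases to the scalar inequalities $A(t)\ge 0$ for $t\ge 1$ and $B(t)\ge 0$ for $0<t\le 1$, where
\begin{equation*}
A(t) := 3t^2-4t+1+\big(2t^2-8t+4\big)\log t, \qquad
B(t) := 3t^2-4t+1-\big(6t^2-8t+4\big)\log t.
\end{equation*}
Both $A$ and $B$ vanish at $t=1$. It is worth noting that a crude first-order bound such as $\LB(y)\le (y-1)\log y$ is \emph{not} enough: that estimate loses a factor $2$ as $y\to 1$, whereas \eqref{el.in.2} is tight to second order at $y=1$. Indeed one checks directly that $A(1)=A'(1)=A''(1)=0$ and likewise $B(1)=B'(1)=B''(1)=0$, so the sign of neither function is decided before the third derivative.

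The decisive computation is then at the level of the third derivatives, where the signs are transparent: $A'''(t)=4/t+8/t^2+8/t^3>0$ for all $t>0$, while $B'''(t)=-8/t^3-12/t-8/t^2<0$. From here the conclusion is a monotonicity cascade anchored at $t=1$. For $t\ge 1$, positivity of $A'''$ makes $A''$ increasing, so $A''\ge A''(1)=0$, hence $A'$ increasing and $A'\ge A'(1)=0$, hence $A$ increasing and $A\ge A(1)=0$. For $0<t\le 1$ the sign of $B'''$ is reversed, but so is the direction in which one integrates from $t=1$: $B''$ is decreasing, so $B''\ge B''(1)=0$ on $(0,1]$, whence $B'\le B'(1)=0$, so $B$ is decreasing and $B\ge B(1)=0$. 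Combining the two cases and undoing the substitution gives \eqref{el.in.2}.

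I expect the main obstacle to be conceptual rather than computational, namely recognizing this second-order tightness at $y=1$ and hence that the argument must be pushed to the third derivative; the actual bookkeeping in the two reductions and the verification that the value and first two derivatives vanish at $t=1$ is routine. No uniformity or matching problem arises at $t=1$, since the two one-sided reductions both attain the value $0$ there, and the behaviour as $t\to 0^+$ needs no separate treatment because the cascade for $B$ runs inward from the right endpoint.
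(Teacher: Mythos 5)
Your proof is correct, and it takes a genuinely different route from the one in the paper. The paper works directly with $g(y)=2(1+|\ln y|)(\sqrt y-1)^2-\LB(y)$, notes $g(1)=0$, and argues $g'\le 0$ on $(0,1]$ and $g'\ge 0$ on $[1,\infty)$; to control the sign of $g'$ it invokes the previously established elementary inequality $\ln(y)(y-1)\ge 4(\sqrt y-1)^2$, introduces an auxiliary function $h$ on $(0,1]$, and splits the case $y\ge 1$ further at $y=4$ with another application of $\ln x\le x-1$. You instead substitute $y=t^2$, reduce each side of $t=1$ to an explicit polynomial-plus-logarithm expression ($A$ resp.\ $B$), observe the second-order degeneracy $A(1)=A'(1)=A''(1)=0$ (and likewise for $B$), and close with a third-derivative monotonicity cascade anchored at $t=1$; I checked the expansions and the derivatives $A'''(t)=4/t+8/t^2+8/t^3>0$, $B'''(t)=-12/t-8/t^2-8/t^3<0$, and the direction of each monotonicity step on $[1,\infty)$ and $(0,1]$, and all are correct. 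Your approach buys a self-contained, purely mechanical argument whose only sign check is on an explicit rational function, at the cost of differentiating three times; the paper's approach stays at first order but pays for it with the auxiliary inequality \eqref{el.in} and more case analysis. Your side remark that the inequality is tight to second order at $y=1$ (so that $\LB(y)\le(y-1)\ln y$ cannot be used as an intermediate step) is also accurate and explains why the degeneracy must be resolved at third order.
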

\begin{proof}
In order to show the nonnegativity of 
\[g(y)=2 (1+|\ln y|) (\sqrt{y}-1)^2 -\LB(y)\]
we distinguish between the cases $y\in (0,1]$ and $y>1$. Note that
$g(1)=0$, hence the statement holds true if $g'(y)\leq 0$ for $y\in(0,1]$
and $g'(y)\geq 0$ for $y>1$.

\underline{Case $y\in(0,1)$:} Differentiation of $g$ in this region gives
\[g'(y)=-\frac{2}{y}(\sqrt{y}-1)^2+2(1-\ln y)\frac{\sqrt{y}-1}{\sqrt{y}}-\ln y.\]
Inequality \eqref{el.in}  implies $-\ln y \geq 4(1-\sqrt{y})/(1+\sqrt{y})$ and we obtain 
\[
&g'(y)\geq-\frac{2}{y}(\sqrt{y}-1)^2 -
2\frac{1-\sqrt{y}}{\sqrt{y}} \:h(y) \text{ with } h(y):=1-\ln y
-\frac{2\sqrt{y}}{\sqrt{y}+1}.
\]
Clearly, the first term is nonpositive. For the second term the
same is true as $h$ is nonnegative, because $h(0)=\infty, h(1)=0$ and $h'(y)\leq 0$ for $y\in(0,1]$.

\underline{Case $y\geq 1$:}  In this case differentiation of $g$ gives
\begin{eqnarray*}
g'(y)&=&\frac{2}{y}(\sqrt{y}-1)^2+2(1+\ln y)\frac{\sqrt{y}-1}{\sqrt{y}}-\ln y\,,
\end{eqnarray*}
which we shall prove to be nonnegative. The only negative contribution
is due to the last term. Note first that  for $y\geq 4 $ we have 
$\ln y\Big(2\frac{\sqrt{y}-1}{\sqrt{y}}-1\Big)\geq 0\,$.
Hence, it remains to investigate the case $y\in[1,4]$. Using the elementary inequality $\ln x\leq x-1$ applied to $x=\sqrt{y}$,
we obtain
\begin{eqnarray*}
g'(y)&\geq&\frac{2}{y}(\sqrt{y}-1)^2+2(1+\ln y)\frac{\sqrt{y}-1}{\sqrt{y}}-2(\sqrt{y}-1)\\
&=&2\frac{\sqrt{y}{-}1}{y}\big(-(\sqrt{y}{-}1)^2+\sqrt{y}\ln y\big) 
\ \geq \ 2\frac{\sqrt{y}{-}1}{y}(\sqrt{y}{-}1)^2\big(-1 + 4
\frac{\sqrt{y}}{y{-}1}\big)\, ,
\end{eqnarray*}
where the second inequality is again due to \eqref{el.in}. We see that $g'(y)\geq 0$ also  for $y\in[1,4]$.
\end{proof}

\begin{lemma}\label{aux.1} Let the assumptions of Proposition
  \ref{prop:EEP} hold and let $\delta_n,\delta_p$ be defined as in
  \eqref{delta.n}, then the following estimate holds
 \begin{eqnarray*}
    n^* \LB\big(\frac{\bar n}{n^*}\big)
  +  p^*  \LB\big( \frac{\bar p}{p^*}\big) 
  \leq C_0(\bar n,\bar p,n^*,p^*)\big(\sqrt{\tfrac{\bar n\bar p}{n^*p^*}}-1\big)^2,
\end{eqnarray*}
where the factor 
\begin{equation}\label{C0}
C_0(\bar n,\bar p,n^*,p^*)=C_1(\bar n,\bar p,n^*,p^*)\Big(p^* + \frac{(p^*)^2}{n^*}+2 \frac{n^*}{\max\{\tfrac{\bar p}{p^*},\tfrac{\bar n}{n^*}\}}\Big)
\end{equation}
is uniformly bounded if $(\bar n,\bar p,n^*,p^*)$ are uniformly bounded, with $C_1$ being explicitly given in \eqref{C1}. 
\end{lemma}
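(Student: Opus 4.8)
The plan is to first linearise the two entropy contributions with the elementary bound \eqref{el.in.2} and then to compare the resulting square-root differences with the single product term $\big(\sqrt{\bar n\bar p/(n^*p^*)}-1\big)^2$ by exploiting the conservation law. Throughout set $a:=\bar n/n^*$, $b:=\bar p/p^*$ and $u:=\sqrt a$, $v:=\sqrt b$, so that the right-hand side of the claim equals $(uv-1)^2$ because $\bar n\bar p/(n^*p^*)=ab$. The structural fact driving the proof is that conservation of $n-p$ in \eqref{normalizations} together with $n^*-p^*=\bfC_0$ gives $\bar n-n^*=\bar p-p^*$, that is
\[
n^*(a-1)=p^*(b-1).
\]
Since $n^*,p^*>0$, the numbers $a-1$ and $b-1$ share the same sign, equivalently $u\ge1\Leftrightarrow v\ge1$. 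A short check in the two regimes $u,v\ge1$ and $u,v\le1$ then shows that the product stays on the same side of $1$ and dominates each factor, so that $(uv-1)^2\ge(u-1)^2$ and $(uv-1)^2\ge(v-1)^2$.

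First I would apply \eqref{el.in.2} with $y=a$ and with $y=b$, which yields
\[
n^*\LB(a)+p^*\LB(b)\le C_1\big(n^*(u-1)^2+p^*(v-1)^2\big),
\]
where $C_1=C_1(\bar n,\bar p,n^*,p^*)$ absorbs the logarithmic prefactors $2(1+|\ln a|)$ and $2(1+|\ln b|)$; this is the constant recorded in \eqref{C1}, finite as long as $\bar n,\bar p$ stay in a compact subset of $(0,\infty)$.

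The core of the argument is then the inequality
\[
n^*(u-1)^2+p^*(v-1)^2\le\Big(p^*+\tfrac{(p^*)^2}{n^*}+\tfrac{2n^*}{\max\{a,b\}}\Big)(uv-1)^2.
\]
For the $p^*$-term I simply invoke $(v-1)^2\le(uv-1)^2$. For the $n^*$-term I split according to $\max\{a,b\}$. If $b\le a$, the constraint written as $(u-1)(u+1)=\tfrac{p^*}{n^*}(v-1)(v+1)$ gives $n^*(u-1)^2=\tfrac{(p^*)^2}{n^*}\tfrac{(v+1)^2}{(u+1)^2}(v-1)^2$; since then $v\le u$ the fraction is at most $1$, and $(v-1)^2\le(uv-1)^2$ produces $n^*(u-1)^2\le\tfrac{(p^*)^2}{n^*}(uv-1)^2$. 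If $b>a$, this substitution is unfavourable, so I instead use that $|u-1/v|\ge|u-1|$ in both sign regimes (because $1/v$ lies at least as far from $u$ as $1$ does), whence $(u-1)^2\le(u-1/v)^2=(uv-1)^2/v^2$ and, since $v^2=\max\{a,b\}$ here, $n^*(u-1)^2\le\tfrac{2n^*}{\max\{a,b\}}(uv-1)^2$. Adding the bounds and enlarging the bracket by the nonnegative terms not used in a given case yields the displayed inequality, hence the claim with $C_0=C_1\big(p^*+\tfrac{(p^*)^2}{n^*}+\tfrac{2n^*}{\max\{a,b\}}\big)$ as in \eqref{C0}.

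I expect the main obstacle to be exactly the case $b>a$: the natural constraint-based rewriting of $n^*(u-1)^2$ carries the factor $(v+1)^2/(u+1)^2>1$ that cannot be dropped, forcing the alternative elementary estimate $(u-1)^2\le(u-1/v)^2$, which must be verified separately in the regimes $u,v\ge1$ and $u,v\le1$. The remaining point is the uniform boundedness assertion: since $C_1$ carries the factors $1+|\ln a|$ and $1+|\ln b|$, the bound on $C_0$ is uniform only once $\bar n,\bar p$ are controlled away from $0$ and $\infty$, which holds along solutions thanks to the $L^1$-bounds of Lemma \ref{lemma:L1bdd-x-indep} and the maximum principle \eqref{e_bbelow}.
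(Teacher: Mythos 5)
Your proof of the main inequality is correct and follows the same two-step skeleton as the paper: first the pointwise bound \eqref{el.in.2} to produce the logarithmic factor $C_1$, then the conservation-law identity $n^*(u-1)(u+1)=p^*(v-1)(v+1)$ (with $u=\sqrt{\bar n/n^*}$, $v=\sqrt{\bar p/p^*}$) to compare $n^*(u-1)^2+p^*(v-1)^2$ with $(uv-1)^2$, using the same case split on $\max\{\bar n/n^*,\bar p/p^*\}$ and arriving at exactly the bracket in \eqref{C0}. Where you differ is the execution of the second step: the paper factors out $p^*(v-1)^2$ and bounds a ratio of quadratic expressions (inequality \eqref{est1}, with the verification of the admissible $C_3$ left largely implicit), whereas you bound each term directly via the elementary facts that $u-1$ and $v-1$ share a sign (so $(v-1)^2\le(uv-1)^2$) and that $|u-1/v|\ge|u-1|$ (so $(u-1)^2\le(uv-1)^2/v^2$); this is more transparent and I checked it delivers the same constant. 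One caveat: you use $C_1=2\max\{1+|\ln(\bar n/n^*)|,\,1+|\ln(\bar p/p^*)|\}$ throughout, which is only the first branch of \eqref{C1}. The case distinction in \eqref{C1} exists precisely so that $C_1$ stays bounded when $\bar n/n^*$ or $\bar p/p^*$ drops below $1/4$ (there one uses $\LB(y)\le 2(\sqrt y-1)^2$ for $y<1/4$, with no logarithm), and Lemma \ref{lemma:L1bdd-x-indep} provides only an \emph{upper} bound on $\bar n,\bar p$, not a lower one, so your appeal to it to keep $\bar n,\bar p$ away from zero does not close the uniform-boundedness assertion. Substituting the paper's case-wise $C_1$ into your first step leaves the rest of your argument unchanged and yields the lemma as stated.
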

\begin{proof}
Using the elementary inequality \eqref{el.in.2} we obtain
\[
&    n^* \LB\big(\frac{\bar n}{n^*}\big)
  +  p^* \LB\big( \frac{\bar p}{p^*}\big) 
  \leq 
  C_1(\bar n,\bar p,n^*,p^*) \Big[ n^*\left(\sqrt{\tfrac{\bar{n}}{n^*}}-1\right)^2 + p^*\left(\sqrt{\tfrac{\bar{p}}{p^*}}-1\right)^2\Big],
\]
where 
\beq\label{C1}
C_1(\bar n,\bar p,n^*,p^*)=\left\{\begin{array}{cll}
2\max\big\{1+\big|\ln \big(\tfrac{\bar n}{n^*}\big)\big|, 1+\big|\ln \big(\tfrac{\bar p}{p^*}\big)\big|\big\}&& \textnormal{if}\ \tfrac{\bar p}{p^*},\tfrac{\bar n}{n^*}\geq \tfrac14
\\
2\left( 1+\big|\ln \big(\tfrac{\bar p}{p^*}\big)\big|\right) && \textnormal{if}\ \tfrac{\bar p}{p^*}\geq \tfrac14\,, \tfrac{\bar n}{n^*}<\tfrac14\\
2\left( 1+\big|\ln \big(\tfrac{\bar n}{n^*}\big)\big|\right) && \textnormal{if}\ \tfrac{\bar n}{n^*}\geq \tfrac14\,, \tfrac{\bar p}{p^*}<\tfrac14
\end{array}\right.
\eeq
which is uniformly bounded due to  Lemma \ref{lemma:L1bdd-x-indep}. 
We next use the following estimate derived in \cite{DiF-Fel-Mar} under the conservation law for $ n -  p$,
\(
   n^*\big(\sqrt{\tfrac{\bar{n}}{n^*}}-1\big)^2 + p^*\big(\sqrt{\tfrac{\bar{p}}{p^*}}-1\big)^2
   \leq C_2(\bar n,\bar p, n^*, p^*) \big(\sqrt{\tfrac{\bar{n}\bar{p}}{n^* p^*}}-1\big)^2 \,.   \label{est1}
\)
In order to keep track of the dependence of the constants on the parameters we give here an explicit bound 
\begin{equation}\label{C2}
C_2(n^*,p^*,\bar n,\bar p)=p^* + \frac{(p^*)^2}{n^*}+2 \frac{n^*}{\max\{\tfrac{\bar p}{p^*},\tfrac{\bar n}{n^*}\}}\,.
\end{equation}
To see this we first note that the conservation law $\bar n-n^* =\bar p- p^*$ can be reformulated as
\begin{eqnarray}\label{rel.n.p}
n^*\Big(\sqrt{\frac{\bar n}{n^*}}-1\Big)\Big(\sqrt{\frac{\bar n}{n^*}}+1\Big)=p^*\Big(\sqrt{\frac{\bar p}{p^*}}-1\Big)\Big(\sqrt{\frac{\bar p}{p^*}}+1\Big)\,.
\end{eqnarray}
This allows us to rewrite
\begin{align*}
&n^*\left(\sqrt{\tfrac{\bar{n}}{n^*}}-1\right)^2 + p^*\left(\sqrt{\tfrac{\bar{p}}{p^*}}-1\right)^2
 =p^*\left(\sqrt{\tfrac{\bar{p}}{p^*}}-1\right)^2\left(\tfrac{n^*\big(\sqrt{\frac{\bar{n}}{n^*}}-1\big)^2}{p^*\big(\sqrt{\frac{\bar{p}}{p^*}}-1\big)^2}+1\right)\\
&= p^*\left(\sqrt{\tfrac{\bar{p}}{p^*}}-1\right)^2\left(\tfrac{p^*\big(\sqrt{\frac{\bar{p}}{p^*}}+1\big)^2}{n^*\big(\sqrt{\frac{\bar{n}}{n^*}}+1\big)^2}+1\right)\\
&\leq  C_3(n^*,p^*,\bar n,\bar p) \left(\sqrt{\tfrac{\bar{p}}{p^*}}-1\right)^2\left(\tfrac{p^*\big(\sqrt{\frac{\bar{p}}{p^*}}+1\big)}
{n^*\big(\sqrt{\frac{\bar{n}}{n^*}}+1\big)}\sqrt{\tfrac{\bar p}{p^*}}+1\right)^2 = C_3(n^*,p^*,\bar n, \bar p) \big(\sqrt{\tfrac{\bar n\bar p}{n^*p^*}}-1\big)^2,
\end{align*}
where we applied again \eqref{rel.n.p} to see the last equality. The multiplier $C_3(n^*,p^*,\bar n,\bar p)$ satisfies the bound
\[
C_3(n^*,p^*,\bar n,\bar p)\geq p^*\left(\tfrac{p^*\big(\sqrt{\frac{\bar{p}}{p^*}}+1\big)^2}{n^*\big(\sqrt{\frac{\bar{n}}{n^*}}+1\big)^2}+1\right)\Big/ \left(\tfrac{p^*\big(\sqrt{\frac{\bar{p}}{p^*}}+1\big)}{n^*\big(\sqrt{\frac{\bar{n}}{n^*}}+1\big)}\sqrt{\tfrac{\bar p}{p^*}}+1\right)^2\,.
\]
Distinguishing between the cases $\sqrt{\frac{\bar p}{p^*}}+1\geq(\leq)\sqrt{\frac{\bar n}{n^*}}+1$, we see that the choice
\[C_3(n^*,p^*,\bar n,\bar p)=p^* + \frac{(p^*)^2}{n^*}+2 \frac{n^*}{\max\{\tfrac{\bar p}{p^*},\tfrac{\bar n}{n^*}\}}\,\]
is sufficient and moreover uniformly bounded. 
\end{proof}

\begin{lemma}\label{aux.2} Let the assumptions of Proposition
  \ref{prop:EEP} hold and let $\delta_n,\delta_p$ and
  $R_n,R_p$ be defined as in \eqref{delta.n} and \eqref{R.n}
  accordingly. Then the estimate
 \begin{eqnarray*}
   \Big(R_n \int_{\calT^d} \delta_n^2 \d x \sqrt{\bar p}+ R_p \int_{\calT^d} \delta_p^2 \d x \sqrt{\bar n} - R_nR_p\int_{\calT^d} \delta_n^2 \d x\int_{\calT^d} \delta_p^2 \d x \Big)^2\leq 2 C_P(\bar n+\bar p) (\calP_n + \calP_p)\,
\end{eqnarray*}
holds.
\end{lemma}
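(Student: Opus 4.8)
The plan is to collapse the three-term bracket on the left-hand side using the identity for $R_n\int_{\calT^d}\delta_n^2\d x$ already recorded in \eqref{R.n}. First I would abbreviate $X:=R_n\int_{\calT^d}\delta_n^2\d x$ and $Y:=R_p\int_{\calT^d}\delta_p^2\d x$; then \eqref{R.n} gives the clean expressions $X=\sqrt{\bar n}-\int_{\calT^d}\sqrt n\,\d x$ and $Y=\sqrt{\bar p}-\int_{\calT^d}\sqrt p\,\d x$. Since $|\calT^d|=1$, Cauchy--Schwarz yields $0\le\int_{\calT^d}\sqrt n\,\d x\le\sqrt{\bar n}$, hence $0\le X\le\sqrt{\bar n}$ and likewise $0\le Y\le\sqrt{\bar p}$. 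Observing that the mixed term equals $R_nR_p\int_{\calT^d}\delta_n^2\d x\int_{\calT^d}\delta_p^2\d x=XY$, the quantity inside the square is exactly $X\sqrt{\bar p}+Y\sqrt{\bar n}-XY$.

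Next I would exploit the sign constraints. From $Y\le\sqrt{\bar p}$ we get $XY\le X\sqrt{\bar p}$, so the bracket is nonnegative, while $-XY\le0$ bounds it above by $X\sqrt{\bar p}+Y\sqrt{\bar n}$; squaring and using $(a{+}b)^2\le2(a^2{+}b^2)$ gives
\[
\big(X\sqrt{\bar p}+Y\sqrt{\bar n}-XY\big)^2\le 2\big(X^2\bar p+Y^2\bar n\big).
\]
The crucial step is then to bound $X^2$. Here I would use two estimates for $X$ at once: on the one hand $X\le\sqrt{\bar n}$, and on the other, since $R_n=\big(\sqrt{\bar n}+\int_{\calT^d}\sqrt n\,\d x\big)^{-1}\le\bar n^{-1/2}$, also $X\le\bar n^{-1/2}\int_{\calT^d}\delta_n^2\d x$. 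Their product produces the cancellation
\[
X^2\le\sqrt{\bar n}\cdot\bar n^{-1/2}\int_{\calT^d}\delta_n^2\d x=\int_{\calT^d}\delta_n^2\d x,
\]
and symmetrically $Y^2\le\int_{\calT^d}\delta_p^2\d x$. Inserting the Poincar\'e inequality \eqref{Poinc.delta}, i.e.\ $\int_{\calT^d}\delta_n^2\d x\le\tfrac{C_P}2\calP_n$ and its $p$-analog, then yields
\[
2\big(X^2\bar p+Y^2\bar n\big)\le C_P\big(\bar p\,\calP_n+\bar n\,\calP_p\big)\le 2C_P(\bar n{+}\bar p)(\calP_n{+}\calP_p),
\]
which is precisely the asserted bound.

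I expect the only genuinely delicate point to be the treatment of $R_n$, which can be unbounded as $\bar n$ degenerates to $0$. The resolution is exactly the observation above: the product $X=R_n\int_{\calT^d}\delta_n^2\d x$ stays bounded by $\sqrt{\bar n}$, while simultaneously $R_n\le\bar n^{-1/2}$, so that combining the two in $X^2\le\int_{\calT^d}\delta_n^2\d x$ removes the apparent singularity in one stroke; unlike the argument in \cite{DiF-Fel-Mar}, this requires no case distinction. All remaining manipulations are routine uses of Cauchy--Schwarz, the elementary inequality $(a{+}b)^2\le2(a^2{+}b^2)$, and the already-established Poincar\'e bound \eqref{Poinc.delta}.
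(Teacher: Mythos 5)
Your proof is correct and takes essentially the same route as the paper: both rest on the identity \eqref{R.n}, the two bounds $R_n\int_{\calT^d}\delta_n^2\,\d x\le\sqrt{\bar n}$ and $R_n^2\int_{\calT^d}\delta_n^2\,\d x\le 1$ (equivalently your $X^2\le\int_{\calT^d}\delta_n^2\,\d x$), and the Poincar\'e inequality \eqref{Poinc.delta}. Your only deviation is a minor streamlining of the cross term: the paper bounds the squared bracket by the maximum of the squares of its positive and negative parts and then estimates $(XY)^2$ separately, whereas you use $0\le Y\le\sqrt{\bar p}$ to place the bracket in $\bigl[0,\,X\sqrt{\bar p}+Y\sqrt{\bar n}\bigr]$ so that the product term can simply be discarded before squaring.
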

\begin{proof}
We first note that 
\begin{align}
  &\Big(R_n \int_{\calT^d} \delta_n^2 \d x \sqrt{\bar p}+ R_p \int_{\calT^d} \delta_p^2 \d x \sqrt{\bar n} - R_nR_p\int_{\calT^d} \delta_n^2 \d x\int_{\calT^d} \delta_p^2 \d x \Big)^2\nonumber\\
\label{aux.max}& \leq \max \Big\{ \Big(R_n \int_{\calT^d} \delta_n^2 \d x \sqrt{\bar p}+ R_p \int_{\calT^d} \delta_p^2 \d x \sqrt{\bar n} \Big)^2, \Big( R_nR_p\int_{\calT^d} \delta_n^2 \d x\int_{\calT^d} \delta_p^2 \d x \Big)^2\Big\}\,
\end{align}
due to the nonnegativity of both terms.  For bounding these terms we
will make use of the fact that due to the definition of $R_n$ and
$\delta_n$ and the bound in \eqref{bound.delta.n.1} we have
\[
R_n \int_{\calT^d} \delta_n^2 \d x \leq \sqrt{\bar n} \, \qquad \textnormal{and} \qquad R^2_n \int_{\calT^d} \delta_n^2 \d x \leq \tfrac{\sqrt{\bar n} }{\sqrt{\bar n}+\int_{\calT^d} \sqrt{n} \d x}\leq 1\,.
\]
Using these estimates we can proceed as follows 
\begin{eqnarray*}
&&\Big(R_n \int_{\calT^d} \delta_n^2 \d x \sqrt{\bar p}+ R_p \int_{\calT^d} \delta_p^2 \d x \sqrt{\bar n} \Big)^2
\leq  2\Big(R_n^2 \Big(\int_{\calT^d} \delta_n^2 \d x\Big)^2 \bar p+ R_p^2 \Big(\int_{\calT^d} \delta_p^2 \d x\Big)^2 \bar n \Big)\\
&&\quad\leq  2\Big(\bar p \int_{\calT^d} \delta_n^2 \d x + \bar n \int_{\calT^d} \delta_p^2 \d x\Big) \leq 2 (\bar n+\bar p)\int_{\calT^d} (\delta_n^2 +\delta_p^2)\d x \,.
\end{eqnarray*}
The second term in \eqref{aux.max} we now bound by
\begin{eqnarray*}
&&R_n^2R_p^2\Big( \int_{\calT^d} \delta_n^2 \d x\int_{\calT^d} \delta_p^2 \d x \Big)^2 \\
&&= \frac12 R_n^2 \Big(\int_{\calT^d}\delta_n^2 \d x\Big)^2R_p^2 \int_{\calT^d}\delta_p^2 \d x  \, \int_{\calT^d}\delta_p^2 \d x + \frac12R_n^2 \int_{\calT^d}\delta_n^2 \d x R_p^2 \Big(\int_{\calT^d}\delta_p^2 \d x\Big)^2\int_{\calT^d}\delta_n^2 \d x\\
&& \leq \frac{1}{2}\Big(\bar n \int_{\calT^d}\delta_p^2 \d x  + \bar p \int_{\calT^d}\delta_n^2 \d x\Big) \leq 
\frac{1}{2}(\bar n +\bar p)\int_{\calT^d}(\delta_n^2 +\delta_p^2)\d x 
\end{eqnarray*}
Applying finally the Poincar\'e estimate as in \eqref{Poinc.delta} completes the proof. 
\end{proof}

\noindent\textbf{Acknowledgment.} JH and PM are funded by KAUST baseline funds and grant no. 1000000193.
AM was partially supported by Einstein-Stiftung Berlin through the \textsc{Matheon}-Project OT1. SH acknowledges support by the Austrian Science Fund via  
the Hertha-Firnberg project T-764, and the previous funding by the Austrian
Academy of Sciences \"OAW via the New Frontiers project NST-000.

\renewcommand{\baselinestretch}{0.95}
\footnotesize

\bibliographystyle{my_alpha}
\bibliography{alex_pub,bib_alex,HasHitMarMie}

\end{document}